%

\documentclass{amsart}
\usepackage{latexsym,amsxtra,amscd,ifthen,amsmath,color,hyperref}
\usepackage{enumerate}
\usepackage{amsfonts}
\usepackage{verbatim}
\usepackage{amsmath}
\usepackage{amsthm}
\usepackage{amssymb}
\usepackage[notcite,notref,final]{showkeys}
 \usepackage[all,cmtip]{xy}

\setlength\topmargin{0in}
\setlength\headheight{0in}
\setlength\headsep{0.3in}
\setlength\textheight{8.7in}
 \setlength\textwidth{6.5in}
\setlength\oddsidemargin{0in}
\setlength\evensidemargin{0in}

\numberwithin{equation}{section}

\theoremstyle{plain}
\newtheorem{theorem}{Theorem}[section]
\newtheorem{lemma}[theorem]{Lemma}

\newtheorem{proposition}[theorem]{Proposition}
\newtheorem{hypothesis}[theorem]{Hypothesis}
\newtheorem{corollary}[theorem]{Corollary}
\newtheorem{conjecture}[theorem]{Conjecture}

\theoremstyle{definition}
\newtheorem{definition}[theorem]{Definition}
\newtheorem{example}[theorem]{Example}

\newtheorem{notation}[theorem]{Notation}
\newtheorem{definitiontheorem}[theorem]{Definition-Theorem}

\newtheorem{remark}[theorem]{Remark}
\newtheorem{question}[theorem]{Question}

\makeatletter              
\let\c@equation\c@theorem  
\makeatother

\DeclareMathOperator{\hdet}{hdet}

\DeclareMathOperator{\Ext}{Ext}

\DeclareMathOperator{\injdim}{injdim}

\DeclareMathOperator{\GKdim}{GKdim}
\DeclareMathOperator{\End}{End}

\DeclareMathOperator{\Hom}{Hom}

\DeclareMathOperator{\im}{im}

\newcommand{\fm}{\mathfrak{m}}

\newcommand{\cal}{\mathcal}
\newcommand{\mf}{\mathfrak}
\newcommand{\mc}{\mathcal}

\newcommand{\assign}{:=}
\newcommand{\nin}{\not\in}

\begin{document}

\title[Quantum binary polyhedral groups]
{Quantum binary polyhedral groups\\
and their actions on quantum planes}

\author{K. Chan, E. Kirkman, C. Walton and J.J. Zhang}

\address{Chan: Department of Mathematics, Box 354350,
University of Washington, Seattle, Washington 98195,
USA}

\email{kenhchan@math.washington.edu}

\address{Kirkman: Department of Mathematics,
P. O. Box 7388, Wake Forest University,
Winston-Salem, NC 27109, USA}

\email{kirkman@wfu.edu}

\address{Walton: Department of Mathematics, Massachusetts 
Institute of Technology, Cambridge, Massachusetts 02139,
USA}

\email{notlaw@math.mit.edu}

\address{zhang: Department of Mathematics, Box 354350,
University of Washington, Seattle, Washington 98195,
USA}

\email{zhang@math.washington.edu}

\bibliographystyle{abbrv}       

\begin{abstract}
We classify quantum analogues of actions of finite subgroups $G$ of 
$SL_2(k)$ on commutative polynomial rings $k[u,v]$. More precisely, 
we produce a classification of pairs $(H, R)$, where $H$ is a finite 
dimensional Hopf algebra that acts inner faithfully and preserves 
the grading of an Artin-Schelter regular algebra $R$ of global dimension 
two. Remarkably, the corresponding invariant rings $R^H$ share similar 
regularity and Gorenstein properties as the invariant rings $k[u,v]^G$ 
in the classical setting. We also present several questions and directions 
for expanding this work in noncommutative invariant theory.
\end{abstract}

\subjclass[2010]{16E65, 16T05, 16W50, 81R50}

\keywords{Artin-Schelter regular algebra, binary polyhedral group, 
Hopf algebra action, invariant subring, quantum plane}

\maketitle

\tableofcontents

\setcounter{section}{-1}


\section{Introduction}
\label{sec0}

Let $k$ be an algebraically closed field of characteristic zero, unless 
stated otherwise. This work is part of a 
program which extends classical invariant theory to a noncommutative setting.
We depart from the traditional situation of
finite groups acting linearly on $k[u,v]$ by considering finite
dimensional Hopf algebras acting on noncommutative analogues of $k[u,v]$.
The latter algebras are called {\em{Artin-Schelter (AS)}} regular algebras of global dimension $2$.
Previous work \cite{JorgensenZhang}, \cite{KKZ:Rigidity},
\cite{KKZ:Gorenstein}, \cite{KKZ:STC} demonstrates that there is a 
rich invariant theory in this context. 

The goal of this paper is to classify noncommutative analogues of linear 
actions of finite subgroups of $SL_2(k)$ 
on AS regular algebras of global dimension $2$ and study the resulting rings of invariants. 
The {\em{homological determinant}} 
generalizes the determinant associated to a linear group action \cite{JingZhang} 
\cite{KKZ:Gorenstein},
and so we only consider actions with trivial homological determinant. 
With this assumption, the rings of invariants turn out to have good homological properties
(AS Gorenstein). In fact, they are often isomorphic to the coordinate rings of Kleinian singularities.
The finite subgroups of $SL_2(k)$ were classified by Felix Klein, and their 
invariant subrings play an important role in classical invariant theory, 
representation theory, and algebraic geometry. Understanding their 
noncommutative analogues is an important contribution to noncommutative 
invariant theory.

Let us discuss in more detail the noncommutative structures mentioned 
above. Naturally, one can view quantum analogues of finite subgroups 
of $SL_2(k)$ (or {\it quantum binary polyhedral groups}) as finite 
subgroups of the quantum group $SL_q(2)$ for $q\in k^{\times}$. Following 
Drinfeld,  we interpret finite subgroups of $SL_q(2)$ as finite dimensional
Hopf quotients of the coordinate Hopf algebra 
$\mc{O}_q(SL_2(k))$. The problem of classifying finite dimensional Hopf quotients 
of $\mc{O}_q(SL_2(k))$ has been examined thoroughly in the literature 
(see e.g.\cite{BichonNatale}, \cite{Muller}, \cite{Stefan}).
The novelty of our work is that we address the 
question of when these non(co)commutative finite subgroups of $SL_2(k)$ 
act on AS regular algebras $R$ of global dimension 
$2$.

Since we assumed $k$ is algebraically closed, the AS regular algebras $R$ of 
global dimension 2, generated in degree one, are isomorphic to either
$$k_J[u,v]:=k\langle u,v\rangle/ (vu-uv-u^2), \qquad
{\text{or}}\qquad k_q[u,v]:=k\langle u,v\rangle/(vu-quv),$$ where $q\in
k^{\times}$ [Example~\ref{ex1.2}]. These algebras are also known as 
{\it quantum planes}.

We say that a finite dimensional Hopf algebra $H$ acts on an algebra 
$R$ if $R$ is a {\it left $H$-module algebra} [Definition \ref{def1.3}]. 
The following statements are \underline{standing assumptions} for this article.

\begin{hypothesis}
\label{hyp0.1} 
\begin{enumerate} 
\item[(1)]
Let $H\neq k$ be a finite-dimensional Hopf algebra.
\item[(2)]
Let $R$ be an Artin-Schelter (AS) regular 
algebra of global dimension 2 that is generated in degree one, that is,
$R$ is isomorphic to either $k_J[u,v]$ or $k_q[u,v]$.
\item[(3)]
Let $H$ act on $R$ inner faithfully  [Definition~\ref{def1.5}], while 
preserving the
grading of $R$.
\end{enumerate}
\end{hypothesis}

The hypotheses below are \underline{used selectively} throughout this work.

\begin{hypothesis}
\label{hyp0.2} 
Assume Hypothesis \ref{hyp0.1} with the additional condition:
\begin{enumerate}
\item[(4)]
The $H$-action on $R$ has trivial homological determinant
[Definition \ref{def1.7}].
\end{enumerate}
\end{hypothesis}

\begin{hypothesis}
\label{hyp0.3} 
Assume Hypothesis \ref{hyp0.2} with the additional condition:
\begin{enumerate}
\item[(5)]
The Hopf algebra $H$ (with antipode $S$) is semisimple, and hence 
$S^2=Id_H$ \cite[Theorem 3]{LarsonRadford}.
\end{enumerate}
\end{hypothesis}

Conditions (1)-(3) generalize faithful, linear actions of  finite
groups $G$ on $k[u,v]$ to the quantum setting. We view condition (4) as
analogous to the condition that $G\subseteq SL_2(k)$ in the following
sense.  In the classical setting $G$ is realized as a subgroup
of $GL_2(k)$, and the usual determinant induces a group homomorphism,
{\sf det}$:G\hookrightarrow GL_2(k) \rightarrow k^{\times}.$
Note that det is not intrinsic to $G$, but depends on the action of
$G$ on $k[u,v]$. The {\it homological determinant} of
a Hopf algebra action on a graded $k$-algebra $R$ is a useful
generalization of the map det.  Although not needed for all our results, 
semisimplicity (Condition (5)) will be used for a large part of this paper, in particular 
for our results on the regularity and the geometry of the resulting 
invariant rings $R^H$ [Theorem~\ref{thm0.6} and Proposition~\ref{thm0.7}].

Now we state our main theorem. We say that a Hopf algebra is 
{\it nontrivial} if it is both noncommutative and noncocommutative.

\begin{theorem}[Lemma~\ref{lem4.1}, Theorems~\ref{thm4.5},
\ref{thm5.2}, \ref{thm6.2}]
\label{thm0.4} 
Let $R$ be an Artin-Schelter regular algebra of global dimension $2$. 
The finite dimensional Hopf algebras $H$ which act inner faithfully on 
$R$,  and satisfy Hypotheses~\ref{hyp0.2}, are
classified in Table 1.  In particular, we have the following statements.
\begin{enumerate}
\item
If $R$ is commutative, then $H$ is cocommutative.
\item
If $R$ is non-PI, then $H$ is both commutative and cocommutative.
\item
If $R$ is noncommutative and PI, then there are nontrivial Hopf algebras
acting inner faithfully on $R$.
\end{enumerate}
Furthermore, if $H$ is non-semisimple, then $R\cong k_q[u,v]$ where $q$
is an $n$-th root of unity for $n\geqslant 3$.
\end{theorem}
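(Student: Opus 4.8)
The plan is to derive the ``furthermore'' clause from parts (1) and (2) together with a hands-on analysis of the single boundary case $q=-1$. Assume Hypothesis~\ref{hyp0.2} and suppose, for contradiction, that $H$ is nonsemisimple. First I would reduce to the PI quantum planes: $k_q[u,v]$ is PI exactly when $q$ is a root of unity, whereas the Jordan plane $k_J[u,v]=k[u][v;\delta]$ with $\delta(u)=u^2$ is an Ore extension by a nonzero derivation and hence is not PI in characteristic zero. So if $R$ is non-PI, part (2) forces $H$ to be both commutative and cocommutative; over an algebraically closed field of characteristic zero such a finite-dimensional Hopf algebra is the group algebra $k[A]$ of a finite abelian group, which is semisimple by Maschke's theorem---a contradiction. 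Thus $R\cong k_q[u,v]$ with $q$ a root of unity. If in addition $q=1$, then $R$ is the commutative polynomial ring, part (1) makes $H$ cocommutative, and by Cartier--Gabriel--Kostant a finite-dimensional cocommutative Hopf algebra in characteristic zero is again a (semisimple) group algebra. Hence $q$ has order $n\ge 2$, and everything reduces to excluding $n=2$.

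The crux---and the step I expect to be the main obstacle---is to rule out $q=-1$. Here $H$ is pointed, so it is generated by its grouplikes together with skew-primitive elements, and its nonsemisimplicity is witnessed by a nontrivial skew-primitive $x$. A grouplike $g$ acts as a graded automorphism of $k_{-1}[u,v]$, and a short computation shows that every such automorphism is either diagonal, $u\mapsto\lambda u,\ v\mapsto\mu v$, or anti-diagonal, $u\mapsto bv,\ v\mapsto cu$; the homological determinant, being the scalar by which $g$ rescales the class of $uv$, then equals $\lambda\mu$ and $-bc$ respectively. Triviality of the homological determinant forces $\hdet(g)=1$, i.e.\ $\lambda\mu=1$ (resp.\ $bc=-1$). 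The element $x$ is a skew $g$-derivation preserving the grading, say $x(u)=\alpha u+\beta v$ and $x(v)=\gamma u+\delta v$; imposing simultaneously the skew-commutation $gx=\zeta xg$ with $\zeta=-1$ (the only nontrivial root-of-unity parameter available at $q=-1$) and the requirement that $x$ annihilate the defining relation $vu+uv$ overdetermines $\alpha,\beta,\gamma,\delta$ and forces them to vanish. For diagonal $g=\mathrm{diag}(\omega,\omega^{-1})$, for instance, the skew-commutation gives $\alpha=\delta=0$ and $\beta(1+\omega^{2})=\gamma(1+\omega^{-2})=0$, while relation-preservation gives $\gamma(1+\omega)=\beta(1+\omega^{-1})=0$; a nonzero $\beta$ or $\gamma$ would require $\omega=-1$, which the first pair of equations then excludes. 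Hence every skew-primitive acts as zero, so the $H$-action factors through the group algebra $k\Gamma$ of the finite group $\Gamma$ of grouplikes. As $k\Gamma$ is a proper semisimple Hopf quotient of the nonsemisimple $H$, its kernel is a nonzero Hopf ideal acting trivially, contradicting inner faithfulness.

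By contrast, when $n\ge 3$ the torus of trivial-determinant diagonal automorphisms is large enough, relative to the constraint $\lambda\mu=1$, that the analogous linear system admits a nonzero skew-derivation---which is exactly why nonsemisimple examples appear in Table~1. The genuinely delicate point in the argument above is to run it uniformly over all pointed nonsemisimple $H$: in particular to dispose of the anti-diagonal grouplikes of order four (which also have trivial homological determinant) and to justify that $H$ is generated by grouplikes and skew-primitives. This is where I would lean on the structural classification of the finite-dimensional Hopf quotients of $\mc{O}_{q}(SL_2(k))$ that underlies Table~1, rather than on a purely computational case analysis.
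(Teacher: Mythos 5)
Your reduction of the ``furthermore'' clause to the cases $q=\pm 1$ is fine as far as it goes: the non-PI algebras ($k_J[u,v]$ and $k_q[u,v]$ with $q$ not a root of unity) are handled in the paper by Proposition~\ref{pro2.7}(a) (citing \cite{CWZ:Nakayama}), which gives directly that $H$ is a group algebra, hence semisimple; your detour through part (2) lands in the same place. But the two remaining cases are exactly where the content lies, and your treatment of both has problems. For $q=1$ you invoke part (1) of the theorem. Within this paper's logic that is circular: parts (1)--(3) are read off from Table~1, and the Table~1 entries for nonsemisimple $H$ (Theorem~\ref{thm6.1}, Proposition~\ref{pro6.12}) are themselves derived \emph{using} Proposition~\ref{pro2.7}(c), i.e.\ the very clause you are proving. (An independent proof of part (1) exists in \cite{CWZ:Nakayama}, but then you are importing the same external result the paper uses, not giving a new argument.)

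The more serious gap is at $q=-1$. Your computation assumes $H$ is pointed and generated by grouplikes and skew-primitives. That is not automatic for a finite-dimensional nonsemisimple Hopf algebra, and it is false for the Hopf algebras that actually arise in this setting: the paper exhibits non-pointed Hopf algebras acting on $k_{-1}[u,v]$ (the algebras $\mathcal{A}(\tilde{\Gamma})$, $\mathcal{B}(\tilde{\Gamma})$ of Corollary~\ref{cor4.6}), and Remark~\ref{rem6.21} records a nonsemisimple non-pointed Hopf algebra in the $q^4=1$ range. Even granting pointedness, your normalization $gx=-xg$ is unjustified --- the braiding scalar of a skew-primitive is an arbitrary root of unity and is not tied to the parameter $q=-1$ of the quantum plane, so e.g.\ Taft algebras with higher-order braiding are not excluded by your linear system. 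You acknowledge this and propose to fall back on ``the structural classification that underlies Table~1,'' but that classification is the conclusion of Theorem~\ref{thm0.4}, so this is again circular. The paper avoids all of this by citing \cite[Theorem 0.1 and Remark 4.4]{CWZ:Nakayama} (Proposition~\ref{pro2.7}(b)): a homological argument via the Nakayama automorphism shows that trivial homological determinant on $k_{\pm 1}[u,v]$ forces $S^2=\mathrm{id}$, whence $H$ is semisimple by Larson--Radford. Some such global input, applying to non-pointed $H$, is needed; your case analysis does not supply it.
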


\noindent {\it Notation.} [$\tilde{\Gamma}$, $\Gamma$, $C_n$, $D_{2n}$, $U$] 
Let $\tilde{\Gamma}$ denote a finite subgroup of $SL_2(k)$, $\Gamma$ 
denote a finite subgroup of $PSL_2(k)$, $C_n$ denote a cyclic group of 
order $n$, and $D_{2n}$ denote a dihedral group of order $2n$. Let 
$\text{ord}(q)$ denote the order of $q$, for $q \in k^{\times}$ a 
root of unity. We also write $R=k\langle U \rangle / I$ where $U=ku\oplus kv$
and $I$ is the two sided ideal generated by the relation.

\[
\begin{array}{|l|l|l|}
\hline
\text{AS regular alg. $R$ of gldim 2} & \text{f.d. Hopf algebra(s) $H$
acting on $R$} & \text{Result number(s)}\\
\hline
\hline
&&\\
k[u,v] & k\tilde{\Gamma}
& \text{\ref{thm4.5}(a1), \ref{thm5.2}(b1,b3)}\\
\hline
k_{-1}[u,v] & kC_n~\text{for}~n\geq 2  
& \text{\ref{thm5.2}(b1,b2,b3)}\\
& kD_{2n} & \text{\ref{thm4.5}(a2)}\\
&  (kD_{2n})^{\circ}
& \text{\ref{thm5.2}(b4)}\\
&\mc{D}(\tilde{\Gamma})^{\circ}, \tilde{\Gamma} \text{ nonabelian}& \text{\ref{thm4.5}(a3) and \ref{cor4.6}}\\
\hline
k_q[u,v], ~q \text{ root of 1, $q^2 \neq$ 1} &&\\
{\text{ if $U$ non-simple}} & kC_n \text{~for~} n\geq 2
& \text{\ref{thm5.2}(b1,b3)}\\
& (T_{q, \alpha, n})^{\circ} &\text{\ref{thm6.2}(c1)}\\
\text{ if $U$ simple, ord($q$)  odd} & H \text{ in } 1 \to 
(k\tilde{\Gamma})^{\circ} \to H^{\circ}
\to \mf{u}_{q}(\mf{sl}_2)^{\circ} \to 1; & \text{\ref{thm6.2}(c2);}\\
\text{ if $U$ simple, ord($q$) even,}~ q^4 \neq 1
& H \text{ in } 1 \to (k\Gamma)^{\circ} \to H^{\circ}
\to \mf{u}_{2,q}(\mf{sl}_2)^{\circ} \to 1; & \text{\ref{thm6.2}(c3) and \ref{pro6.25};}\\
\text{ if $U$ simple}, ~q^4 =1
& 
\begin{tabular}{l}
\hspace{-.07in}$H \text{ in } 1 \to (k\Gamma)^{\circ} \to H^{\circ}
\to \mf{u}_{2,q}(\mf{sl}_2)^{\circ} \to 1 ~~ \text{or}$  \\
$  \phantom{H \text{ in }} 1 \to (k\Gamma)^{\circ} \to H^{\circ}
\to  \frac{\mf{u}_{2,q}(\mf{sl}_2)^{\circ}}{(e_{12}-e_{21} e_{11}^2)}\to 1$
\end{tabular}
& \text{\ref{thm6.2}(c3) and \ref{rem6.24}}\\
\hline
k_q[u,v], ~q \text{ not a root of 1} & kC_n, n \geq 2& 
\text{\ref{thm5.2}(b1,b3)}\\
\hline
k_J[u,v] & kC_2 & \text{\ref{thm5.2}(b1)}\\
\hline
\end{array}
\]
\smallskip

\begin{center}
Table 1: Summary of Theorem \ref{thm0.4}
\end{center}
\bigskip

When $U$ is a simple left $H$-module, we can fit $H$ into
the following diagram, where the rows are exact sequences of Hopf algebras.
$$
\xymatrix@-1pc{
k \ar[r] & \mc{O}(G) \ar[r]  \ar@{->>}[d]
& \mc{O}_q(SL_2(k)) \ar[r] \ar@{->>}[d]
& \overline{\mc{O}_q(SL_2(k))} \ar[r] \ar@{->>}[d] & k\\
k \ar[r] &  (kG')^{\circ} \ar[r] & H^{\circ} \ar[r] & K \ar[r] & k
}
$$

\medskip

\[
\begin{array}{|c|c|c|c|c|}
\hline
 \text{Theorem no.}    & q & G & G' & K \\
\hline
\hline
\text{\ref{thm4.5}(a1)} & 1& SL_2(k) & \tilde{\Gamma} \text{~nonabelian} & k\\
\text{\ref{thm4.5}(a2)} &-1& PSL_2(k)& C_n &  k C_2\\
\text{\ref{thm4.5}(a3)} &-1& PSL_2(k)& \Gamma  \text{~nonabelian} & k C_2\\
\text{\ref{thm6.2}(c2)} & \text{ord}(q) \text{ is odd} & SL_2(k)
& \tilde{\Gamma}
& {\tiny \overline{\mc{O}_q(SL_2(k))} \cong \mf{u}_q(\mf{sl}_2)}^{\circ}\\
\text{\ref{thm6.2}(c3)} & \text{ord}(q) \text{ is even,}  ~q^4 \neq 1& PSL_2(k)
& \Gamma
& {\tiny \overline{\mc{O}_q(SL_2(k))} \cong \mf{u}_{2,q}(\mf{sl}_2)}^{\circ}\\
\text{\ref{thm6.2}(c3)} & \text{ord}(q) \text{ is even,}  ~q^4 = 1& PSL_2(k)
& \Gamma
& \begin{tabular}{rl}
${\tiny \overline{\mc{O}_q(SL_2(k))}} \cong$ &
${\tiny \mf{u}_{2,q}(\mf{sl}_2)}^{\circ}$\\
& $\text{or } 
{\tiny \frac{\mf{u}_{2,q}(\mf{sl}_2)^{\circ}}{(e_{12}-e_{21}e_{11}^2)}}$
\end{tabular}\\
\hline
\end{array}
\]
\smallskip

\begin{center}
Diagram-Table 2: For
$H$-actions with $U$ a simple $H$-module
\end{center}

\bigskip

Note that part (a) of Theorem~\ref{thm0.4} is proved in both
\cite[Proposition 0.7]{CWZ:Nakayama} and \cite[Theorem~1.3]{EtingofWalton:ssHopf} via different techniques. 
Moreover, the 
most interesting (nontrivial) Hopf algebra actions occur on the PI 
algebras $k_q[u,v]$ for $q$ a root of unity. 

Next, we study the rings of invariants arising from the Hopf algebra 
actions appearing in Theorem \ref{thm0.4}, again with the aim of 
generalizing results  on actions of finite subgroups of $SL_2(k)$ on 
$k[u,v]$ in classical invariant theory. Watanabe's theorem 
implies that if $G$ is a
finite subgroup of $SL_2(k)$, then $k[u,v]^G$ is Gorenstein 
\cite[Theorem 1]{Watanabe}. By \cite[Theorem 0.1]{KKZ:Gorenstein}, 
if the pair $(H,R)$ satisfies Hypothesis \ref{hyp0.3} (with $H$ 
semisimple), then $R^H$ is {\it Artin-Schelter Gorenstein} 
[Definition~\ref{def1.1}]; we extend this result to the non-semisimple case.

\begin{proposition} [\cite{KKZ:Gorenstein}, 
Propositions \ref{pro6.8}, \ref{pro6.14}, \ref{pro6.28}]
\label{pro0.5} Let $(H,R)$ be a pair as in Theorem \ref{thm0.4}. Then
the invariant subring $R^H$ is Artin-Schelter Gorenstein.
\end{proposition}

The Shepherd-Todd-Chevalley theorem 
implies $k[v_1, \dots, v_n]^G$ is not regular if $G$ acts via a subgroup of $SL_n(k)$.
Our next 
result gives a sufficient condition for the invariant subrings from 
Proposition \ref{pro0.5} \underline{not} to be Artin-Schelter regular.
Note that this result holds for actions on AS regular algebras of arbitrary
global dimension.

\begin{theorem}[Theorem \ref{thm2.3}]
\label{thm0.6}  
Let $H$ be a semisimple Hopf algebra, and $R$ be a noetherian connected 
graded Artin-Schelter regular algebra equipped with an $H$-module 
algebra structure. If $R^H\neq R$ and the homological determinant of 
the $H$-action on $R$ is trivial, then $R^H$ is not Artin-Schelter regular.
\end{theorem}

\noindent This theorem fails if $H$ is non-semisimple; see 
Lemma~\ref{lem6.7} (Case: $l=m=n$).

Finally, we consider the McKay correspondence in the context of Hopf 
algebra actions on Artin-Schelter regular algebras. As in the classical 
setting, the McKay quiver of the $H$-action on $R$ has vertices
indexed by the isomorphism classes of irreducible 
representations of $H$, and hence records information about the 
representations of $H$. The McKay quivers of the actions in 
Theorem~\ref{thm0.4} are given in the final result.

\begin{proposition}[Proposition~\ref{pro7.1}]
\label{thm0.7}  
Assume Hypothesis \ref{hyp0.3} for  a semisimple Hopf algebra $H$ acting 
on an Artin-Schelter regular algebra $R = k \langle U \rangle/(r)$ of
global dimension two, with trivial homological determinant.  Then, the 
McKay quiver $Q(H,U)$ is either of type A, D, E, L, or DL.
\end{proposition}

Classically, only types  A, D, E appear for the McKay quivers of finite
subgroups of $SL_2(k)$ (acting on $k[u,v]$). In particular, the 
McKay quiver of type DL comes from the dihedral group action on 
$k_{-1}[u,v]$ in Theorem~\ref{thm0.4}. However, in the classical setting, 
the trivial determinant condition precludes dihedral group actions. Hence, 
by replacing the algebra $k[u,v]$ with a noncommutative algebra, and by 
extending the notion of determinant, our noncommutative version of the 
McKay correspondence involves more groups. Further study of this 
correspondence is the subject of future work.

This paper is organized as follows. We provide  background material on 
Artin-Schelter regular algebras, Hopf algebra actions, and the 
homological determinant in Section \ref{sec1}. As trivial homological 
determinant is a vital condition in our work, Section 
\ref{sec2} is devoted to the study of Hopf actions on Artin-Schelter regular 
algebras under this condition. For example, we show how to detect if 
the homological determinant  of an $H$-action on an Artin-Schelter 
regular algebra of dimension 2 is trivial [Theorem \ref{thm2.1}].
In Section~\ref{sec3}, we prove some elementary results on self-dual $H$-modules
which are used later in the paper. Sections~\ref{sec4}, \ref{sec5}, and 
\ref{sec6} are dedicated to the proof of Theorem~\ref{thm0.4} in the 
cases where $H$ is semisimple and noncommutative, $H$ is commutative 
(so, semisimple), and $H$ is non-semisimple, respectively. Moreover, we 
compute the McKay quivers of our classified semisimple Hopf algebra 
actions in Section~\ref{sec:McKay}.  In Section~\ref{sec8}, we suggest 
several questions for further study.


\section{Background material}
\label{sec1}

Here, we provide background material for Artin-Schelter regular 
algebras, for Hopf algebra actions on graded algebras, and for the 
homological determinant of such actions.

\subsection{Artin-Schelter regularity}
\label{ssec1.1}

An algebra $R$ is said to be {\it connected graded} if
$R = k \oplus R_1 \oplus R_2 \oplus \cdots$
with $R_i \cdot R_j \subseteq R_{i+j}$ for
all $i,j \in \mathbb{N}$. The {\it Hilbert series} of $R$ is defined
to be $\sum_{i \in \mathbb{N}} (\dim_k R_i) t^i$. As mentioned in
the introduction, we consider a class of (noncommutative) graded
algebras that serve as noncommutative analogues of commutative
polynomial rings. These algebras are defined as follows.

\begin{definition}
\label{def1.1}
Let $R$ be a connected graded algebra. Then, $R$ is
{\it Artin-Schelter (AS) regular} if it satisfies the
conditions below:
\begin{enumerate}[(1)]
\item $R$ has finite global dimension;
\item $R$ has finite Gelfand-Kirillov dimension;
\item $R$ is {\it Artin-Schelter Gorenstein}, that is
\begin{enumerate}[(a)]
\item $R$ has finite injective dimension $d < \infty$, and
\item Ext$^i_R(k,R) = \delta_{i,d} \cdot k(\ell)$ for some
$\ell \in \mathbb{Z}$ called the AS index of $R$. Here,  $k(\ell)$ is the $\ell$-th shift.
\end{enumerate}
\end{enumerate}
\end{definition}

\begin{example}
\label{ex1.2} Since $k$ is algebraically closed, the AS regular 
algebras of global dimension two that are generated in degree 
one are listed below (up to isomorphism):
\begin{enumerate}[(i)]
\item
the {\it Jordan plane}: $k_J[u,v] := k \langle u,v \rangle/ (vu - uv - u^2)$,
and
\item the {\it skew polynomial ring}:
$k_q[u,v] := k \langle u,v \rangle/(vu - quv)$ for $q \in k^{\times}$.
\end{enumerate}
\end{example}

\subsection{Hopf actions and homological determinant}
\label{ssec1.2}

We adopt the usual notation for the Hopf structure of a Hopf algebra
$H$, namely $(H, m, u, \Delta, \epsilon, S)$. We also adopt Sweedler's notation for the comultiplication: $\Delta(h) =
\sum h_1 \otimes h_2$ for $h \in H$. Note that every 
finite dimensional Hopf algebra has bijective antipode, so the Hopf algebras
appearing in this paper all have bijective antipodes. 
Moreover, let
$H^{\circ}$ denote its {\it Hopf dual}, which is just the $k$-linear dual $H^{\ast}$ of $H$ since $H$ is 
finite dimensional. 
Consider a sequence of Hopf algebra maps
$$k \rightarrow L \overset{\iota}{\rightarrow} H
\overset{\pi}{\rightarrow} \bar{H} \rightarrow k$$
where $\iota$ is injective and $\pi$ is surjective. We say the sequence is {\it exact} 
if either of the following equivalent conditions hold:
\begin{enumerate}[(i)]
\item $\ker \pi$ = $HL^+$, where $L^+$=ker $\epsilon$ is the augmentation
ideal;
\item $L = H^{\mathrm{co} \pi} = \{h \in H ~|~ (\pi \otimes \text{id}) \Delta(h)
= 1 \otimes h\}$.
\end{enumerate}

\noindent
We say that $H'$ is a {\it normal} Hopf subalgebra of $H$ if
\begin{center}
ad$_l(h)(f) := \sum h_{1} f S(h_{2}) \in H'$ \hspace{.2in} and
\hspace{.2in} ad$_r(h)(f) := \sum S(h_{1}) f h_{2} \in H'$
\end{center}
for all $f \in H'$ and $h \in H$.
The above two conditions are equivalent since we assume that Hopf
algebras have bijective antipode.

Given a left $H$-module $M$, we denote the $H$-action by $\cdot : H
\otimes M \rightarrow M$. Similarly for a Hopf algebra $K$, given 
a right $K$-comodule $M$, we denote the $K$-coaction by 
$\rho: M \rightarrow M \otimes K$. If $H$ is finite dimensional, 
then $M$ is a left $H$-module if and only if $M$ is a right 
$H^{\circ}$-comodule.

Now, we recall basic facts about Hopf algebra actions; refer to
\cite{Montgomery} for further details.

\begin{definition}
\label{def1.3}
Let $H$ be a Hopf algebra $H$ and $R$ be a $k$-algebra. We say
that {\it $H$ acts on $R$} (from the left), or $R$ is a {\it left
$H$-module algebra}, if $R$ is a left $H$-module,
if $h \cdot (ab) = \sum (h_1 \cdot a)(h_2 \cdot b)$,
 and if $h \cdot 1_R = \epsilon(h)
1_R$, for all $h \in H$, and for all $a,b \in R$.  

The {\it invariant subring} of
such an action is defined to be
$$R^H = \{ a \in R ~|~ h \cdot a = \epsilon(h) a, ~\forall h \in H \}.$$

Dually, we say that a Hopf algebra $K$ {\it coacts on} $R$ (from the right),
or $R$ is a {\it right $K$-comodule algebra}, if $R$ is a right 
$K$-comodule, if $\rho(1_R) = 1_R \otimes 1_K$,  and 
if $\rho(ab) = \rho(a) \rho(b)$, for all $a,b \in R$. The
{\it coinvariant subring} of such a coaction is given as follows:
$$R^{\mathrm{co} K} = \{a \in R ~|~ \rho(a) = a \otimes 1_K\}.$$
\end{definition}

Here is an example of a Hopf coaction on an AS regular algebra, which 
will be used in Sections~\ref{sec4} and~\ref{sec6}.

\begin{example}  
\label{ex1.4} 
Consider the {\it quantum special linear group} $\mc{O}_q(SL_2(k))$ 
for $q \in k^{\times}$, generated by 
$e_{11}$, $e_{12}$, $e_{21}$, $e_{22}$, subject to relations:
\[
\begin{array}{lll}
e_{12}e_{11} = q e_{11} e_{12}, & e_{21}e_{11} = q e_{11} e_{21}, 
\hspace{.2in}
e_{22}e_{12} = q e_{12} e_{22}, & 
\hspace{.15in} e_{22}e_{21} = q e_{21} e_{22},\\
e_{21}e_{12} = e_{12} e_{21}, & e_{22}e_{11} 
= e_{11} e_{22} + (q - q^{-1})e_{12}e_{21}, & 
\hspace{.15in} e_{11}e_{22} - q^{-1}e_{12} e_{21} 
= e_{22} e_{11} - q e_{12}e_{21}=1.
\end{array}
\]
The coalgebra structure and antipode are given by 
$\Delta(e_{ij}) = \sum_{m=1}^2 e_{im} \otimes e_{mj}$, 
$\epsilon(e_{ij}) = \delta_{ij}$, and
$$S(e_{11}) = e_{22}, \hspace{.15in} S(e_{12}) = -q e_{12}, 
\hspace{.15in} S(e_{21})=-q^{-1} e_{21}, \hspace{.15in} S(e_{22})=e_{11}.$$
We have that $\mc{O}_q(SL_2(k))$ coacts on $k_q[u,v]$ from the 
right by $$\rho(u) = u \otimes e_{11} + v \otimes e_{21} \hspace{.2in} 
\text{and} \hspace{.2in} \rho(v) = u \otimes e_{12} + v \otimes e_{22}.$$
\end{example}

We want to restrict ourselves to $H$-actions that do not factor
through `smaller' Hopf algebras.

\begin{definition}\cite{BanicaBichon}
\label{def1.5} Let $M$ be a left $H$-module. We say that $M$ is an {\it
inner faithful} $H$-module, or $H$ \emph{acts inner faithfully} on $M$,
if $IM\neq 0$ for every nonzero Hopf ideal $I$ of $H$.

Dually, let $N$ be a right $K$-comodule. We say that $N$ is an
\emph{inner faithful} $K$-comodule, or $K$ \emph{coacts inner faithfully}
on $N$, if for any proper Hopf subalgebra $K'\subsetneq K$, $\rho(N)$
is not in $N\otimes K'$.
\end{definition}

Let $U$ be any left $H$-submodule of $R$ that generates $R$ as an
algebra. Then $H$ acts on $R$ inner faithfully if and only if $H$
acts on $U$  inner faithfully. Here is a useful lemma pertaining to
inner faithfulness.

\begin{lemma}
\label{lem1.6}
Suppose $H$ is finite dimensional and let $K =H^{\circ}$. Let $U$ be a
left $H$-module, so it is also a right $K$-comodule with coaction $\rho$.
Then the following conditions hold.
\begin{enumerate}
\item
The $H$-action on $U$ is inner faithful if and only if the induced
$K$-coaction on $U$ is inner faithful.
\item
Let $C$ be the smallest subcoalgebra of $K$ such that
$\rho(U)\subseteq U\otimes C$.  Then $U$ is an inner faithful $K$-comodule
if and only if $K$ is generated as an algebra by $\bigcup_{n\geq 0}
S^n(C)$. \qed
\end{enumerate}
\end{lemma}

We define the homological determinant of an $H$-action on an algebra 
$R$ below. We refer to \cite[Section~2]{KKZ:Gorenstein} for 
background on local cohomology modules.

\begin{definition}
\label{def1.7}
Let $H$ be a finite dimensional Hopf algebra acting on a connected graded
noetherian AS Gorenstein algebra $R$. Suppose that the $H$-action on $R$
preserves the grading of $R$. Let $d=\mathrm{injdim}(R)$ which is finite since
$R$ is AS Gorenstein. We denote by $\fm$ the maximal graded ideal of 
$R^{H}$ consisting of all elements with positive degree, and 
$H^{d}_{\fm}(R)$ the $d$-th local cohomology of $R$ with respect to $\fm$.
The lowest degree nonzero homogeneous component of
$H^d_{\fm}(R)^{\ast}$ is $1$-dimensional, and for it, we
choose a basis element $\mathfrak{e}$. Then there is an algebra
homomorphism $\eta: H \rightarrow k$ such that the right $H$-action on $H_{\mathfrak{m}}^d(R)^*$ is given by $\eta(h)\mathfrak{e}$ for all $h \in H$.

\begin{enumerate}
\item[(1)] The composite map $\eta
\circ S: H \rightarrow k$ is called the {\it homological
determinant} of the $H$-action on $R$, denoted by $\hdet_H R$.
\item[(2)] We
say that $\hdet_H R$ is {\it trivial} if $\eta \circ S = \epsilon$
\cite[Definition 3.3]{KKZ:Gorenstein}.
\end{enumerate}

Dually, if $K$ coacts on $R$ from the right, then $K$ coacts on
$k{\mathfrak e}$ and $\rho({\mathfrak e})={\mathfrak e}\otimes {\sf D}^{-1}$
for some grouplike  element ${\sf D}$ in $K$.
\begin{enumerate}
\item[(3)] The {\it homological
codeterminant} of the $K$-coaction on $R$ is defined to be
${\mathrm{hcodet}}_K R={\sf D}$.
\item[(4)] We say that ${\mathrm{hcodet}}_K R$
is {\it trivial} if ${\mathrm{hcodet}}_K R=1_K$
\cite[Definition 6.2]{KKZ:Gorenstein}.
\end{enumerate}
\end{definition}

Let $V$ be a $k$-vector space and $G$ be a finite subgroup of $GL(V)$
acting linearly on $R=k[V]$. Then $\hdet_{kG} R = \det$ where
$\det:kG\to k$ is the determinant map on $GL(V)$, restricted to $G$, then
extended linearly to $kG$. Consequently, $\hdet_{kG}R$ is trivial if
and only if $G\subseteq SL(V)$, see \cite{JorgensenZhang}.
Moreover in \cite{KKZ:Gorenstein}, the
authors assume that $H$ is semisimple, yet the definition of
homological (co)determinant does not require that $H$ is semisimple.


\section{Hopf actions with trivial homological determinant}
\label{sec2}

In this section, we discuss several results pertaining to Hopf
actions on graded algebras with trivial homological determinant.
In the case of Hopf algebras acting on AS regular algebras of global dimension
$2$, we can express the homological determinant concretely.

\begin{theorem} \label{thm2.1}
Let $R=k\langle U\rangle/(r)$ be an AS regular algebra of
global dimension $2$ generated in degree one and let $H$ be a Hopf algebra that acts on $R$. Then, $\hdet_H R$ is equal to the map $H\to \End_k(M)$
where $M$ is the 1-dimensional $H$-module $(kr)^{\ast}$. Consequently,
$\hdet_H R$ is trivial if and only if $k r$ is the trivial $H$-module.
\end{theorem}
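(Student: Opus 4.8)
The plan is to read the homological determinant off the $H$-equivariant minimal free resolution of the trivial module, using that an AS regular algebra of global dimension $2$ generated in degree $1$ is quadratic with a single relation. First I would write down the minimal free resolution of the trivial left module ${}_Rk$. Since $R=k\langle U\rangle/(r)$ has generating space $U=R_1$ (of dimension $2$) and the single quadratic relation $r$, this resolution has the shape
$$0\to R\otimes kr \xrightarrow{\partial_2} R\otimes U \xrightarrow{\partial_1} R \xrightarrow{\epsilon} k\to 0,$$
with $R\otimes U\cong R(-1)^{\oplus 2}$ and $R\otimes kr\cong R(-2)$, so $R$ is AS Gorenstein of injective dimension $d=2$ and AS index $l=2$. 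Because $R$ is a left $H$-module algebra whose action preserves the grading, the two-sided ideal $(r)$ is $H$-stable, and its degree-$2$ component is exactly $kr$; hence $kr\subseteq U\otimes U$ is an $H$-submodule for the diagonal action, and being one-dimensional it is governed by a character $\chi\colon H\to k$, i.e. $h\cdot r=\chi(h)\,r$. I would then check that $\partial_1,\partial_2$ are $H$-linear, which is immediate from the module-algebra axioms of Definition~\ref{def1.3}, so the whole resolution is $H$-equivariant with generator modules $V_0=k$, $V_1=U$, and $V_2=kr$.

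Next I would connect the top of this resolution to the datum defining $\hdet_H R$ in Definition~\ref{def1.6}. Passing to $H^2_{\mathfrak m}(R)$ through graded local duality and restricting to the one-dimensional top, the free factor contributes only its degree-zero part $R_0=k$, on which $H$ acts by $\epsilon$; hence the lowest nonzero homogeneous component of $H^2_{\mathfrak m}(R)^{\ast}$, spanned by the element $\mathfrak e$ of Definition~\ref{def1.6}, should be identified with the relation module $kr$ as an $H$-module. This gives $\eta=\chi$ and therefore
$$\hdet_H R=\eta\circ S=\chi\circ S,$$
which is precisely the character by which $H$ acts on the dual module $M=(kr)^{\ast}$, i.e. the representation $H\to\End_k(M)$. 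I expect the main obstacle to be exactly this bookkeeping: one must keep straight which arrows are twisted by the antipode $S$ (the $k$-dual of a left $H$-module is a left $H$-module via $S$) and confirm that no extra twist by a Nakayama-type automorphism survives on the one-dimensional top, so that the surviving character is $\chi$ and not $\chi\circ S^{\pm 1}$. To guard against such twist errors I would cross-check against Example~\ref{ex1.7}: there $R=k[V]$ forces $U=R_1=V^{\ast}$, so $kr=\wedge^2 V^{\ast}$ carries $\chi=\det_V^{-1}$, and $\chi\circ S$ recovers $\det_V=\det$, matching the stated value of $\hdet_{kG}R$.

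Finally, the ``consequently'' statement is formal. Since $S$ is bijective and $\epsilon\circ S=\epsilon$, the character $\chi\circ S$ equals $\epsilon$ if and only if $\chi=\epsilon$. Thus $\hdet_H R$ is trivial precisely when $h\cdot r=\epsilon(h)\,r$ for all $h\in H$, that is, precisely when $kr$ is the trivial $H$-module, which completes the argument.
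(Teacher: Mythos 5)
Your proposal is correct and follows essentially the same route as the paper: both build the $H$-equivariant free resolution $0\to R\otimes kr\to R\otimes U\to R\to k\to 0$, identify the top term with the one-dimensional $H$-module $kr$, and conclude that $\hdet_H R$ is the character of $(kr)^{\ast}$. The only difference is that where you sketch the passage from the top of the resolution to the local-cohomology definition via graded local duality (and sanity-check the antipode twist against Example~\ref{ex1.7}), the paper simply invokes \cite[Lemma 5.10(c)]{KKZ:Gorenstein}, which states that the homological determinant can be computed as the $H$-action on $\Ext^2_R(k,k)\cong (kr)^{\ast}$.
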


\begin{proof}
First, we construct an $H$-invariant free resolution of the left
$R$-module $_Rk$. Since $k$ is algebraically closed, we may assume
that $r=vu-quv+\eta u^2$ for some $q\in k^\times$ and $\eta \in k$, where
$\{u,v\}$ is a basis of the $k$-vector space $U$ [Example
\ref{ex1.2}]. Since $R$ is an $H$-module algebra, the multiplication map
$\mu: R\otimes R\to R$ is an $H$-homomorphism. This induces an
$H$-homomorphism $\phi: R\otimes U\to R$. By using the chosen basis,
we have an $H$-homomorphism $d_1:(R\otimes ke_u) \oplus (R\otimes
ke_v)\to R$ induced by the multiplication map where $e_u:=u\in U$ and
$e_v:=v\in U$. This gives rise to a partial resolution of $_Rk$,
$$(R\otimes ke_u) \oplus (R\otimes ke_v) \xrightarrow{d_1} R
\xrightarrow{d_0} k\xrightarrow{\quad} 0.$$ Since $R$ has global
dimension 2, the kernel of the map $d_1$ is a free module of rank 1,
denoted by $Re_r$ with basis element $e_r$. Using the relation $r$, we have that
$e_r$ is identified with element $(v + \eta u)\otimes e_u- qu\otimes e_v$
in $(R\otimes ke_u) \oplus (R\otimes ke_v)$. By definition, $kr$ is
a left $H$-module. Hence, $ke_r$ is a left $H$-module which is
isomorphic to $kr$. The $H$-module structure on $Re_r$ is equivalent
to the $H$-module structure on $R\otimes ke_r$. Now we have an
$H$-equivariant free resolution of $_Rk$,
$$0\to R\otimes ke_r\to (R\otimes ke_u) \oplus (R\otimes ke_v)
\to R\to k\to 0.$$ Applying $\Hom_R(-,k)$ to the resolution above, we
obtain an $H$-module isomorphism
$$\Ext^2_R(k,k)\cong\Hom_k(ke_r,k)=: (ke_r)^*.$$
Thus the $H$-action on $\Ext^2_R(k,k)$ is the $H$-action on
$(ke_r)^*$. By \cite[Lemma 5.10(c)]{KKZ:Gorenstein}, the homological
determinant can be computed as the $H$-action on $\Ext^2_R(k,k)$, so
the result follows, and the consequence is clear.
\end{proof}

The following lemma could be useful for future work.

\begin{lemma}
\label{lem2.2}
Let $R=k\langle U\rangle /(r_1,\cdots,r_s)$ be a Koszul AS regular
algebra of global dimension $d$, generated in degree one, and let $H$ be a
Hopf algebra that acts on $R$. If $\hdet_H R$ is trivial, then
$U^{\otimes d}$ contains a copy of the trivial $H$-module $k$.
\end{lemma}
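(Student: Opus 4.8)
The plan is to mimic the proof of Theorem~\ref{thm2.1}, but with the two-step free resolution used there replaced by the full Koszul resolution of $_Rk$. Write $W\subseteq U\otimes U$ for the span of the relations $r_1,\dots,r_s$; since $R$ is generated in degree $1$ and Koszul, these relations are homogeneous of degree $2$, so $W$ is genuinely a subspace of $U\otimes U$. Because $R$ is a graded $H$-module algebra, $U=R_1$ is a left $H$-module and the degree-$2$ relation space $W$ is preserved by the $H$-action, i.e. $W$ is an $H$-submodule of $U\otimes U$. This is the structural observation that makes everything below $H$-equivariant.

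The key steps are as follows. First I would invoke the standard Koszul resolution
$$0\to R\otimes K_d\to \cdots \to R\otimes K_1\to R\otimes K_0=R\to k\to 0,$$
where $K_0=k$, $K_1=U$, and in general
$$K_n=\bigcap_{i+j=n-2} U^{\otimes i}\otimes W\otimes U^{\otimes j}\ \subseteq\ U^{\otimes n}.$$
Each $K_n$ is an intersection of $H$-submodules of $U^{\otimes n}$ (the action on $U^{\otimes i}\otimes W\otimes U^{\otimes j}$ is the restriction of the diagonal action, and $W$ is $H$-stable), hence is itself an $H$-submodule of $U^{\otimes n}$; and the Koszul differentials, being induced by the $H$-equivariant multiplication $R\otimes U\to R$, are $H$-homomorphisms. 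Thus the resolution above is an $H$-equivariant minimal free resolution of $_Rk$. Since $R$ is AS regular of global dimension $d$, the top term $K_d$ is one-dimensional. Applying $\Hom_R(-,k)$ and using minimality (so that every induced differential vanishes), I obtain an $H$-module isomorphism $\Ext^d_R(k,k)\cong K_d^{\ast}$, exactly paralleling the isomorphism $\Ext^2_R(k,k)\cong(ke_r)^{\ast}$ in Theorem~\ref{thm2.1}. By \cite[Lemma 5.10(c)]{KKZ:Gorenstein}, the homological determinant $\hdet_H R$ is computed as the $H$-action on $\Ext^d_R(k,k)\cong K_d^{\ast}$. If $\hdet_H R$ is trivial, then $K_d^{\ast}$ is the trivial one-dimensional $H$-module; since the dual of a trivial one-dimensional module is again trivial (its character $\epsilon$ satisfies $\epsilon\circ S=\epsilon$), $K_d$ itself is trivial. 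As $K_d\subseteq U^{\otimes d}$ is a nonzero $H$-submodule isomorphic to $k$, we conclude that $U^{\otimes d}$ contains a copy of the trivial $H$-module.

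The main obstacle I expect is not the final deduction but the verification that the Koszul machinery is fully $H$-equivariant and that the top Koszul piece is correctly identified as a trivial submodule of $U^{\otimes d}$ rather than of its dual. Concretely, one must check that the $K_n$ are honest $H$-submodules of $U^{\otimes n}$, that minimality of the resolution forces the claimed identification $\Ext^d_R(k,k)\cong K_d^{\ast}$ as $H$-modules (tracking the $S$-twist in the dualization), and that the cited Lemma 5.10(c), established via local cohomology, applies verbatim in global dimension $d$ just as it does in dimension $2$ for Theorem~\ref{thm2.1}. Once these bookkeeping points are secured, the passage from triviality of $\hdet_H R$ to triviality of $K_d$, and hence to the presence of a trivial summand in $U^{\otimes d}$, is immediate.
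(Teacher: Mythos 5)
Your proof is correct and is essentially the paper's argument in dual form: the paper works with the Koszul dual algebra $R^!\cong\Ext_R^{*}(k,k)$, generated in degree one, to exhibit $\Ext_R^d(k,k)\cong k$ as a quotient of $(U^{*})^{\otimes d}$ and then dualizes, while you exhibit the same one-dimensional module directly as the top Koszul syzygy $K_d\subseteq U^{\otimes d}$. Both routes rest on the same two inputs --- Koszulity of $R$ and \cite[Lemma 5.10(c)]{KKZ:Gorenstein} identifying $\hdet_H R$ with the $H$-action on $\Ext_R^d(k,k)$ --- so the arguments are interchangeable.
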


\begin{proof}
Recall that the Koszul dual $R^!$ of $R$ is defined to be
$k\langle U^* \rangle/(r_1, \dots, r_s)^{\perp}$. Since $R$ is AS regular,
$R^!$ is isomorphic to
$$\Ext_R^*(k,k) = k \oplus \Ext_R^1(k,k) \oplus \dots \oplus \Ext_R^d(k,k).$$
Here, $\Ext_R^1(k,k) \cong U^*$. Since $R^!$ is Koszul, it is
generated in degree one. Thus $\Ext_R^d(k,k)$ is isomorphic to a
quotient of $(U^*)^{\otimes d}$ by some $k$-vector space. Since
$\hdet_H R$ is trivial, we have that $\Ext_R^d(k,k) \cong k$ as
$H$-modules by \cite[Lemma 5.10(c)]{KKZ:Gorenstein}. Thus, $k$ is a
quotient module of the $H$-module $(U^*)^{\otimes d}$, which is
equivalent to our desired result.
\end{proof}

Our main application of Hopf actions on algebras with trivial
homological determinant is illustrated in the following theorem.

\begin{theorem} \label{thm2.3}
Let $H$ be a semisimple Hopf algebra. Suppose that $R$ is a noetherian
connected graded AS regular algebra and $H$ acts on $R$. If
$R^H\neq R$ and $\hdet_H R$ is trivial, then $R^H$ is not AS regular.
\end{theorem}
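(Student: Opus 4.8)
The plan is to argue by contradiction: assume $R^H$ is AS regular and extract a numerical contradiction by comparing the Hilbert-series functional equations of $R$ and of $R^H$. Two inputs drive the argument. First, the triviality of $\hdet_H R$ should force the two AS indices to coincide; second, AS regularity of $R^H$ should force $R$ to be a \emph{free} graded module over $R^H$. Throughout I would use that semisimplicity of $H$ makes $R$ module-finite over the noetherian ring $R^H$ (so $R$ and $R^H$ share a common dimension $d$), and that the Reynolds operator — the normalized integral of $H$ — exhibits $R^H$ as an $H$-equivariant direct summand of $R$ via an exact projector. By Proposition~\ref{pro0.6} (the Gorenstein theorem of \cite{KKZ:Gorenstein}), $R^H$ is AS Gorenstein of injective dimension $d$; hence both $R$ and $R^H$ satisfy a functional equation of the same shape, $H_A(t^{-1}) = (-1)^{d} t^{\ell_A} H_A(t)$, where $\ell_A$ denotes the AS index.

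For the freeness step I would invoke the graded Auslander--Buchsbaum equality over the (assumed) AS regular algebra $R^H$: since $R$ is itself AS regular it is Cohen--Macaulay, so $\depth_{R^H}(R) = d = \depth(R^H)$, whence $\pdim_{R^H}(R) = 0$ and $R$ is free over $R^H$. Writing $H_R(t) = f(t)\,H_{R^H}(t)$ for the polynomial $f$ recording the degrees of a homogeneous basis, one has $f(0)=1$, and $f \equiv 1$ precisely when $R = R^H$. Substituting $H_R = f\cdot H_{R^H}$ into the two functional equations and cancelling gives
\[
f(t^{-1}) = t^{\,\ell_R - \ell_{R^H}}\, f(t).
\]
Once the index equality $\ell_R = \ell_{R^H}$ is in hand this reduces to $f(t^{-1}) = f(t)$; as $f$ is an honest polynomial with $f(0)=1$, matching powers of $t$ forces $f \equiv 1$, i.e. $R = R^H$, contradicting the hypothesis $R^H \neq R$.

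The heart of the matter — and the step I expect to be the main obstacle — is the index equality $\ell_{R^H} = \ell_R$ under trivial homological determinant. The plan here is to read both indices off top local cohomology: $\ell_R$ from the lowest nonzero degree of $H^d_{\mathfrak{m}}(R)^{\ast}$, and, because $H$ is semisimple and local cohomology commutes with the exact Reynolds projector, $\ell_{R^H}$ from $H^d_{\mathfrak{m}_{R^H}}(R^H) = \big(H^d_{\mathfrak{m}}(R)\big)^{H}$. By Definition~\ref{def1.6}, $\hdet_H R$ is exactly the $H$-character on the one-dimensional lowest-degree component of $H^d_{\mathfrak{m}}(R)^{\ast}$; its triviality means this component is the trivial $H$-module, so it is not annihilated upon passing to invariants and persists into $(H^d_{\mathfrak{m}}(R))^{H}$. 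Consequently the invariant dualizing datum begins in the same degree as that of $R$, yielding $\ell_{R^H} = \ell_R$ (a nontrivial $\hdet$ would instead shift this starting degree strictly upward, which is the usual ``no reflections'' phenomenon forcing $\ell_{R^H} > \ell_R$). Making this degree-bookkeeping precise — reconciling the sign and shift conventions of Definition~\ref{def1.6} with those of the functional equation, and checking the Cohen--Macaulayness and finiteness needed for Auslander--Buchsbaum and for the functional equations to apply — is the technical crux; the surrounding Hilbert-series manipulation is then routine.
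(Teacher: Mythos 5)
Your proposal is correct and follows essentially the same route as the paper: assume $R^H$ is AS regular, deduce that $R$ is graded free over $R^H$, compare Hilbert series to force $\ell_R<\ell_{R^H}$ whenever $R\neq R^H$ (your functional-equation computation $f(t^{-1})=t^{\ell_R-\ell_{R^H}}f(t)$ is equivalent to the paper's comparison of $\deg p_R$ and $\deg p_{R^H}$ via Lemmas~\ref{lem2.4} and~\ref{lem2.5}), and contradict the index equality coming from trivial homological determinant. The ``technical crux'' you identify is precisely Lemma~\ref{lem2.6}(b) of the paper, proved there by the same local-cohomology/integral argument you sketch.
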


By \cite[Theorem 0.1]{KKZ:Gorenstein}, $R^H$ is always AS Gorenstein
under the hypotheses above.
On the other hand, we set some notation to prove the theorem above.  
The following lemma is proved in \cite{StephensonZhang}.

\begin{lemma} \cite[Sections 2 and 3]{StephensonZhang}
\label{lem2.4} Let $A$ be a noetherian AS regular algebra. The
Hilbert series $H_A(t)$ is equal to $\frac{1}{p(t)}$, where
$p\in \mathbb{Z}[t]$ satisfies
\begin{enumerate}
\item
$p(0)=1$,
\item
\cite[Proposition 3.1(4)]{StephensonZhang} $\deg p(t)=\ell$, where
$\ell$ is the AS index of $A$,
\item
\cite[Theorem 2.4(2)]{StephensonZhang}
the leading coefficient of $p(t)$ is $1$ or $-1$. \qed
\end{enumerate}
\end{lemma}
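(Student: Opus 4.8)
The plan is to read everything off a minimal graded free resolution of the trivial module ${}_Ak$ and to exploit the Artin--Schelter Gorenstein symmetry of its Betti numbers. Since $A$ is noetherian, connected graded, and of finite global dimension $d$, the cyclic module ${}_Ak$ admits a finite minimal graded free resolution
\[
0 \to P_d \to P_{d-1} \to \cdots \to P_1 \to P_0 \to k \to 0,
\qquad P_i = \bigoplus_{j} A(-j)^{b_{ij}},
\]
with each $P_i$ finitely generated and all $b_{ij}\in\mathbb{Z}_{\geqslant 0}$. Taking Hilbert series along this exact sequence (an Euler-characteristic computation) and using $H_{P_i}(t)=H_A(t)\sum_j b_{ij}t^j$, I would obtain $1 = H_k(t) = H_A(t)\,p(t)$, where $p(t) := \sum_{i,j}(-1)^i b_{ij}\,t^j$. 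This already exhibits $H_A(t)=1/p(t)$ with $p\in\mathbb{Z}[t]$, since the $b_{ij}$ are nonnegative integers and the resolution is finite.

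For $p(0)=1$ I would use minimality: as $k$ is cyclic we have $P_0 = A$, and minimality forces every differential to have entries in the augmentation ideal, so no higher term $P_i$ with $i\geqslant 1$ carries a degree-zero generator. Hence $b_{00}=1$ and $b_{i0}=0$ for $i\geqslant 1$, and the constant term of $p(t)$ equals $1$, proving (1).

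The crux is a self-duality of the resolution. Applying $\RHom_A(-,A)$ to $P_\bullet$ and invoking the AS Gorenstein condition $\Ext^i_A(k,A)=\delta_{i,d}\,k(\ell)$, the dual complex $\Hom_A(P_\bullet,A)$ is, up to reversal and a shift by $\ell$, a minimal graded free resolution of the right module $k_A$. Comparing the Betti numbers of the two sides then yields the symmetry $b_{ij} = b_{d-i,\,\ell-j}$ for all $i,j$. Granting this, I can finish parts (2) and (3) at once: since all free generators lie in nonnegative degrees, $b_{ij}=0$ whenever $j>\ell$ (otherwise $b_{d-i,\,\ell-j}$ would have negative second index), so $\deg p(t)\leqslant \ell$; and the coefficient of $t^\ell$ in $p(t)$ is $\sum_i (-1)^i b_{i\ell} = \sum_i (-1)^i b_{d-i,\,0} = (-1)^d$, using $b_{m0}=\delta_{m0}$. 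Thus $\deg p(t)=\ell$ is exactly the AS index and the leading coefficient is $(-1)^d=\pm 1$.

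The main obstacle is establishing the Betti-number symmetry $b_{ij}=b_{d-i,\,\ell-j}$, that is, that $\Hom_A(-,A)$ carries the minimal resolution of ${}_Ak$ to that of $k_A(\ell)$. This is where the full strength of AS Gorenstein-ness enters, together with the noetherian hypothesis on both sides, and it is the point requiring genuine care: one must verify that the dualized complex is again exact and minimal (so that its free ranks really are the Betti numbers of $k_A$), and that the single top Ext class pins down $P_d\cong A(-\ell)$. The remaining bookkeeping is then routine, and indeed this is precisely the content imported from \cite{StephensonZhang}.
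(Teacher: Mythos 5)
Your argument is sound, but note that the paper itself offers no proof of this lemma: it is imported wholesale from Stephenson--Zhang, with parts (b) and (c) cited to \cite{StephensonZhang}, so the comparison is with that source rather than with anything internal to the paper. Your derivation of $H_A(t)=1/p(t)$ and $p(0)=1$ from a finite minimal graded free resolution of ${}_Ak$ (which exists since $A$ is noetherian of finite global dimension) is the standard computation, and your Gorenstein-duality proof of (b) matches the spirit of \cite{StephensonZhang}, Section 3. For (c), however, your route is genuinely different from, and less general than, theirs: their Theorem 2.4(2) asserts that the leading coefficient of $p$ is $\pm 1$ for \emph{any} noetherian connected graded algebra of finite global dimension, with no Gorenstein hypothesis, and is proved by growth considerations (noetherianness forces subexponential growth, so all zeros of $p$ lie on the unit circle, and a Kronecker-type argument makes $p$ a product of cyclotomic polynomials). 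Your argument buys a much quicker proof under the stronger AS regular hypothesis in force here, plus the sharper conclusion that the leading coefficient is exactly $(-1)^d$; it would not recover their more general theorem.

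One bookkeeping point in your symmetry step deserves care. Dualizing the minimal resolution of ${}_Ak$ and using $\Ext^i_A(k,A)=\delta_{i,d}\,k(\ell)$ produces, after reversal and a twist by $-\ell$, a minimal free resolution of the \emph{right} module $k_A$; what you obtain directly is $b'_{d-i,\,\ell-j}=b_{ij}$, where $b'$ denotes right-module Betti numbers. To write $b_{ij}=b_{d-i,\,\ell-j}$ as you do, either invoke balancedness of Tor, namely $b_{ij}=\dim_k\Tor_i^A(k,k)_j=b'_{ij}$, or sidestep the identification entirely: both resolutions compute $1/H_A(t)$, so $p(t)=p'(t)=(-1)^d t^{\ell}\,p(1/t)$, and this functional equation together with $p(0)=1$ and the vanishing $b_{ij}=0$ for $j<0$ already yields $\deg p=\ell$ and leading coefficient $(-1)^d$. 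Relatedly, your claim that $b_{ij}=0$ for $j>\ell$ rests on minimality and connectedness placing the generators of $P_i$ in positive degrees for $i\geq 1$ (and degree $0$ for $i=0$); with these points made explicit, your proof is complete and correct.
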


When we are working with more than one algebra, let us use $p_A(t)$
to denote the polynomial $p(t)$. For a graded module $M$, let  $M(n)$
denote the shift of $M$ by degree
$n$. The following lemma is well-known
and follows from \cite[Lemma 1.10]{KKZ:Rigidity}.

\begin{lemma}
\label{lem2.5}
Let $A$ be a noetherian connected graded algebra,
and let $B$ be a noetherian connected graded subalgebra of $A$.
Suppose that
\begin{enumerate}
\item[(i)]
$B\neq A$;
\item[(ii)]
$A$ and $B$ are AS regular algebras of the same global dimension; and
\item[(iii)]
$_BA$ and $A_B$ are finitely generated left and right $B$-modules
respectively.
\end{enumerate}
Then the following hold.
\begin{enumerate}
\item
Both $_BA$ and $A_B$ are graded free $B$-modules.
\item
Suppose that $H_A(t)=\frac{1}{p_A(t)}$ and $H_B(t)=\frac{1}{p_B(t)}$.
Then, $p_A(t)\mid p_B(t)$, and $\deg p_A(t)<\deg p_B(t)$.
\end{enumerate}
\end{lemma}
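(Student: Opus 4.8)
The plan is to establish the freeness claim (1) first and then read off the Hilbert-series statement (2) as a formal consequence. Throughout, write $d$ for the common global dimension of $A$ and $B$, and let $\fm_A$ and $\fm_B$ denote the respective augmentation ideals.

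For (1) I would treat the left-module case, the right-hand statement being symmetric, and show that $A$ is projective, hence free, as a graded $B$-module. Since $B$ is AS regular of global dimension $d$, it is AS Gorenstein of injective dimension $d$, so $\depth_B B = d$. The crux is to show that $A$ has the same depth over $B$. Because $A$ is a finitely generated $B$-module and both rings are connected graded, the quotient $A/\fm_B A$ is finite dimensional; consequently the $\fm_B$-torsion and $\fm_A$-torsion of $A$ coincide and the two local-cohomology functors agree on $A$, i.e. $\HB^i_{\fm_B}(A) \cong \HB^i_{\fm_A}(A)$ for all $i$. As $A$ is itself AS Gorenstein of injective dimension $d$, the right-hand side vanishes for $i < d$ and is nonzero for $i = d$, whence $\depth_B A = d$. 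Applying the graded Auslander--Buchsbaum formula over $B$ (available here because a noetherian AS regular algebra is Cohen--Macaulay and admits a balanced dualizing complex) then gives
\[
\pdim_B A = \depth_B B - \depth_B A = d - d = 0,
\]
so $A$ is projective, and therefore (by graded Nakayama) graded free, as a left $B$-module. Alternatively one may simply invoke \cite[Lemma 1.10]{KKZ:Rigidity}, which packages exactly this conclusion.

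With (1) in hand, part (2) is a short computation. I would write $_BA \cong \bigoplus_{i=1}^{m} B(-n_i)$ with $n_i \geq 0$ and set $f(t) = \sum_{i=1}^{m} t^{n_i}$. Since $A_0 = B_0 = k$ is one-dimensional, there is exactly one free generator in degree $0$, so $f(0) = 1$. Comparing Hilbert series gives $H_A(t) = f(t)\,H_B(t)$, and substituting $H_A(t) = 1/p_A(t)$ and $H_B(t) = 1/p_B(t)$ from Lemma~\ref{lem2.4} yields
\[
p_B(t) = f(t)\, p_A(t),
\]
so in particular $p_A(t) \mid p_B(t)$. Finally, if $f$ were constant it would equal $f(0)=1$, forcing $A \cong B$ as graded $B$-modules and hence $A = B$, contrary to hypothesis (i); therefore $\deg f \geq 1$ and $\deg p_B(t) = \deg f(t) + \deg p_A(t) > \deg p_A(t)$.

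The main obstacle is (1): the Hilbert-series consequence is essentially formal once freeness is known, but the freeness itself rests on the noncommutative depth machinery, specifically the identification $\depth_B A = d$ via the local-cohomology comparison and a graded Auslander--Buchsbaum formula over the (possibly noncommutative) ring $B$. The delicate point is to confirm that AS regularity of $B$ supplies the homological hypotheses making that formula applicable; citing \cite[Lemma 1.10]{KKZ:Rigidity} bypasses this, but the depth argument above is the conceptual content of the lemma.
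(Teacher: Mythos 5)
Your proposal is correct and follows essentially the same route as the paper: part (1) rests on \cite[Lemma 1.10]{KKZ:Rigidity} (the paper cites exactly this, while you additionally sketch the depth/Auslander--Buchsbaum content behind it), and part (2) is the same Hilbert-series computation, with $f(0)=1$ justified directly from $A_0=B_0=k$ where the paper instead reads it off from $p_A(0)=p_B(0)=1$ via Lemma~\ref{lem2.4}(a). Both variants are sound and the arguments coincide in substance.
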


\begin{proof} (a) This follows from \cite[Lemma 1.10(b)]{KKZ:Rigidity}.

(b) By part (a), $A$ is a finitely generated free graded left
$B$-module. Hence, we can write $_BA= \bigoplus_{n=0}^w B(n)^{i_n}$
for a finite sequence of non-negative integers $i_0,i_1,\cdots,i_w$.
Then
$$\frac{1}{p_A(t)}=H_A(t)=\left(\sum_{n=0}^w i_n t^n\right)H_B(t)=
\frac{\sum_{n=0}^w i_n t^n}{p_B(t)},$$ which implies that
$p_B(t)=p_A(t)\left(\sum_{n=0}^w i_n t^n\right)$ and $p_A(t)\mid
p_B(t)$. By Lemma \ref{lem2.4}(a), $p_A(0)=p_B(0)=1$, so we have that
$i_0=1$. Since $B\neq A$, we get that $\sum_{n=0}^w i_n t^n\neq 1$. Thus $\deg
(\sum_{n=0}^w i_n t^n)>0$. Therefore $\deg p_A(t)<\deg p_B(t)$.
\end{proof}

The following lemma follows from \cite[Section 3]{KKZ:Gorenstein}.

\begin{lemma}
\label{lem2.6}
Let $A$ be a noetherian AS Gorenstein algebra, and let
$H$ be a semisimple Hopf algebra acting on $A$. Suppose $B:=A^H$ is
AS Gorenstein. Then, we have the following statements:
\begin{enumerate}
\item
$\ell_A\leq \ell_B$, where $\ell_A$ and $\ell_B$ denote the
AS indexes of $A$ and $B$, respectively; and
\item
$\hdet_H A$ is trivial if and only
if $\ell_A=\ell_B$.
\item
Let $$k\to H'\to H \to H''\to k$$ be an exact sequence of
semisimple Hopf algebras. Let $C=A^{H'}$ and let $H''$ act 
on $C$ naturally. If $\hdet_H A$ is trivial, then so is
$\hdet_{H''} C$.
\end{enumerate}
\end{lemma}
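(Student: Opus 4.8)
The plan is to treat parts (a) and (b) as the analytic input from \cite[Section 3]{KKZ:Gorenstein} and to deduce (c) from them by a monotonicity (``squeeze'') argument on AS indices. Recall from Lemma~\ref{lem2.4}(b) that for a noetherian AS Gorenstein algebra the AS index equals $\deg p$, where $H(t)=1/p(t)$; this reduces every comparison of AS indices to a comparison of Hilbert-series denominators. For (a) and (b) I would use the standard description of the homological determinant as the (antipode-twisted) one-dimensional $H$-action on the top local cohomology $H^d_{\mathfrak m}(A)$, together with the transformation of the canonical module of $A^H$ under the Reynolds operator of the semisimple $H$; this comparison of dualizing data records the discrepancy between $\ell_A$ and $\ell_B$, yielding simultaneously the inequality $\ell_A\le\ell_B$ of (a) and the equivalence ``$\hdet_H A$ trivial $\iff \ell_A=\ell_B$'' of (b).

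For (c), first I would record the structural facts forced by the exact sequence $1\to H'\to H\to H''\to 1$: the subalgebra $H'$ is normal, so the quotient $H''$ acts naturally on $C=A^{H'}$ with $C^{H''}=A^H=B$, and the composite of the two Reynolds operators is the Reynolds operator of $H$. The key preliminary step is to propagate triviality of the homological determinant downward along the inclusion $\iota\colon H'\hookrightarrow H$. Using that $\hdet_H A$ is computed as $\chi\circ S_H$, where $\chi\colon H\to k$ is the character of the one-dimensional module $\Ext_A^d(k,k)$ (Theorem~\ref{thm2.1} and \cite[Lemma 5.10(c)]{KKZ:Gorenstein}), and that $\iota$ is a Hopf map (so $\iota\circ S_{H'}=S_H\circ\iota$ and $\epsilon_H\circ\iota=\epsilon_{H'}$), I obtain $\hdet_{H'}A=(\hdet_H A)\circ\iota$. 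Hence $\hdet_H A=\epsilon_H$ forces $\hdet_{H'}A=\epsilon_{H'}$, so $\hdet_{H'}A$ is trivial as well. By \cite[Theorem 0.1]{KKZ:Gorenstein} it then follows that $C=A^{H'}$ is again noetherian and AS Gorenstein (of the same injective dimension as $A$, since the extensions are module finite), so that $\ell_C$ is defined and parts (a), (b) may legitimately be applied to both stages $A\supseteq C$ and $C\supseteq B$.

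The deduction is then immediate. Applying (a) to the $H'$-action on $A$ and to the $H''$-action on $C$ gives
$$\ell_A\ \le\ \ell_C\ \le\ \ell_B.$$
On the other hand, (b) applied to the $H$-action on $A$ converts the hypothesis that $\hdet_H A$ is trivial into the equality $\ell_A=\ell_B$. Squeezing, $\ell_C=\ell_B$, and applying (b) to the $H''$-action on $C$ (whose fixed ring is $C^{H''}=B$) turns this equality back into the statement that $\hdet_{H''}C$ is trivial, as desired.

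The main obstacle is not the final squeeze, which is purely formal, but securing the two hypotheses that make parts (a), (b) applicable at the intermediate stage: that $C=A^{H'}$ is AS Gorenstein, and that its AS index is comparable to those of $A$ and $B$. Both hinge on the restriction identity $\hdet_{H'}A=(\hdet_H A)\circ\iota$; without knowing $\hdet_{H'}A$ is trivial one cannot invoke the Gorenstein theorem of \cite{KKZ:Gorenstein} to produce $\ell_C$ at all. A secondary technical point to verify is that $H''$ genuinely acts on $C$ with $C^{H''}=B$ and with its homological determinant compatible with the data on $A$; this is precisely where normality of $H'$ and the factorization of the Reynolds operator enter.
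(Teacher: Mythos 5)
Your proposal is correct and follows essentially the same route as the paper: parts (a) and (b) are obtained from the local-cohomology/integral (Reynolds operator) description of the dualizing data in \cite[Section 3]{KKZ:Gorenstein}, and part (c) is exactly the paper's squeeze argument $\ell_A\le\ell_C\le\ell_B$ combined with $\ell_A=\ell_B$ from (b). Your explicit verification that $\hdet_{H'}A=(\hdet_H A)\circ\iota$ (so that $C$ is AS Gorenstein and $\ell_C$ is defined) fills in a step the paper only asserts, and is a welcome addition.
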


\begin{proof} Let $\ell:=\ell_A$ be the AS index of $A$. We combine
the proof of (a) and of (b) as follows.

We  use the convention introduced in
\cite[Section 3]{KKZ:Gorenstein}, in particular, we assume that $H$
acts on $A$ on the right. Let $Y$ denote the local cohomology
module $H_{\fm}^d(A)^*$, where $\mathfrak m=A_{\ge 1}$ and
$d=\injdim A$. By \cite[Lemma 1.7]{KKZ:Gorenstein}, $Y$ is
isomorphic to $H^d_{\fm'}(A)^{\ast}$, where $\fm'$ is the maximal
graded ideal of $A^H$, since $A$ is finite
over $A^H$ as left and right modules. We also denote by $\Omega$, the
$A^H$-module $Y\cdot \int$ where $\int$ is an integral in $H$.
Then by \cite[Lemma~3.2(b)]{KKZ:Gorenstein}, $\Omega[d]$ is the
balanced dualizing complex of $A^H$. If $\hdet_H A$ is trivial, then by
\cite[Lemma 3.5(c,d)]{KKZ:Gorenstein}, $Y^H\cong \Omega \cong A^H(-\ell)$
as right $A^H$-modules. Therefore, by  \cite[Lemma 1.6]{KKZ:Gorenstein},
the AS index of $A^H$ is equal to $\ell$.  This proves one
implication of part (b).

Conversely, assume that $\hdet_H A$ is not trivial. By
\cite[Lemma 3.5(f)]{KKZ:Gorenstein}, ${\mathfrak e}\not\in Y^H$, where
${\mathfrak e}$ is a generator of $Y$. Thus, the lowest degree of
any nonzero elements in $Y^H$ is larger than $\ell$. Since $Y^H[d]$
($=\Omega[d]$) is the balanced dualizing complex over $B$,  we have 
that $Y^H[d]\cong {^\sigma B^1}(-\ell_B)[d]$ by 
\cite[Lemmas~3.2(b) and~1.6]{KKZ:Gorenstein}. Hence $Y^H\cong B(-\ell_B)$ 
as right $B$-modules. Thus the lowest degree of any nonzero elements 
in $B(-\ell_B)$ is larger than $\ell$. So $\ell_B>\ell$. This proves 
the other implication of (b), as well as part (a).

(c) Since $\hdet_H A$ is trivial, we have that $B:=A^H$ is AS Gorenstein.
Since $H'$ is a Hopf subalgebra of $H$, $\hdet_{H'}A$ is trivial.
Consequently, $C$ is AS Gorenstein.
Clearly, $B=A^H=C^{H''}$. By part (a), $\ell_A\leq \ell_C\leq \ell_B$.
By part (b), $\ell_A=\ell_B$, and hence $\ell_C=\ell_B$. The assertion follows
from part (b).
\end{proof}

Now, we are ready to prove Theorem \ref{thm2.3}.

\begin{proof}[Proof of Theorem \ref{thm2.3}] Proceed by
contradiction and assume that $B:=R^H$ is AS regular. By the hypotheses
of Theorem \ref{thm2.3}, $R$ is noetherian and AS regular. The
hypothesis in Lemma \ref{lem2.5}(iii) holds because $H$ is a semisimple 
Hopf algebra \cite[Theorem 4.4.2]{Montgomery}. Since $R$ and $B$ have the 
same injective dimension, they have the same global dimension when both
have finite global dimension. By Lemma \ref{lem2.5}(b), $\deg
p_R(t)<\deg p_B(t)$. Hence,
$\ell_R=\deg p_R(t)<\deg p_B(t)=\ell_B$ due to Lemma \ref{lem2.4}(b).
By Lemma \ref{lem2.6}(b), $\hdet_H R$ is not trivial, thus producing
a contradiction.
\end{proof}

Next, we recall results from \cite{CWZ:Nakayama} on finite dimensional 
Hopf actions on AS regular algebras of global dimension 2, sometimes 
with trivial homological determinant. These results are used later,
especially in Section \ref{sec6}.

\begin{proposition}
\label{pro2.7}
Let $H$ be a finite dimensional Hopf algebra. Let $R$ be an AS regular
algebra of global dimension 2, that is to say, $k_q[u,v]$ or $k_J[u,v]$. Suppose that
$H$ acts on $R$ inner faithfully and preserves the grading of $R$.
\begin{enumerate}
\item
\cite[Theorem 5.10]{CWZ:Nakayama}
If $R$ is either $k_J[u,v]$ or $k_q[u,v]$ for $q$ not a root of unity, then
$H$ is a group algebra.
\item
\cite[Theorem 0.1 and Remark 4.4]{CWZ:Nakayama}
If $R=k_q[u,v]$ with $q = \pm 1$ and if the $H$-action on $R$ has
trivial homological determinant, then $H$ is semisimple.
\item
Assume Hypothesis \ref{hyp0.2} with $H$ non-semisimple.
Then $R = k_{q}[u,v]$ for $q$ a root of unity with $q \neq \pm 1$.
\end{enumerate}
\end{proposition}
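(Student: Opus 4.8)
The plan is to prove part (c) by elimination, using the already-established parts (a) and (b) to force $H$ to be semisimple in every case for $R$ other than the one claimed; since $H$ is assumed nonsemisimple, those cases cannot occur. By Hypothesis~\ref{hyp0.1}(2) and Example-Notation~\ref{ex1.2}, $R$ is isomorphic to exactly one of the following: the Jordan plane $k_J[u,v]$; the skew polynomial ring $k_q[u,v]$ with $q$ not a root of unity; $k_q[u,v]$ with $q=1$; $k_q[u,v]$ with $q=-1$; or $k_q[u,v]$ with $q$ a root of unity and $q\neq\pm1$. I will rule out the first four possibilities, leaving the last as the desired conclusion.

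First I would treat the case $R=k_J[u,v]$ together with the case $R=k_q[u,v]$ for $q$ not a root of unity. Under Hypothesis~\ref{hyp0.1}, $H$ acts on $R$ inner faithfully while preserving the grading, so part (a) applies and $H$ must be a group algebra $kG$ for some finite group $G$. Because $k$ has characteristic zero, Maschke's theorem shows that $kG$ is semisimple, contradicting the nonsemisimplicity of $H$. Hence neither of these two cases can occur.

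Next I would treat $R=k_q[u,v]$ with $q=\pm1$. Here Hypothesis~\ref{hyp0.2}(4) supplies exactly what part (b) requires, namely that the $H$-action on $R$ has trivial homological determinant; consequently part (b) forces $H$ to be semisimple, once again contradicting the standing assumption. Having excluded all four cases, the only remaining possibility is $R\cong k_q[u,v]$ for $q$ a root of unity with $q\neq\pm1$, as claimed. I do not expect any genuine obstacle in this deduction: it is a clean case analysis whose entire content is packaged into parts (a) and (b) of Proposition~\ref{pro2.7} (imported from \cite{CWZ:Nakayama}) together with Maschke's theorem, so the only point warranting care is checking that the hypotheses of those two results are indeed furnished by Hypothesis~\ref{hyp0.2}, which they are.
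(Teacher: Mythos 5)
Your proposal is correct and is precisely the argument the paper intends: the paper's proof of part (c) consists of the single line ``Part (c) follows from parts (a,b),'' and your case analysis (using Maschke's theorem in characteristic zero to dispose of the group-algebra cases from (a), and Hypothesis~\ref{hyp0.2}(4) to invoke (b) when $q=\pm1$) is exactly the elimination being left implicit there. No differences to report.
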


\begin{proof} Part (c) follows from parts (a,b).
\end{proof}

The following result provides the classification of finite dimensional, 
cocommutative Hopf algebras $H$ acting on $R$ satisfying Hypothesis 
\ref{hyp0.2}.  Recall that $H$ is a finite group algebra in this case. 
Moreover, if $R = k[u,v]$, then $G$ is a finite subgroup of $SL_2(k)$, 
so we cover the cases where $R$ is noncommutative below.

\begin{proposition}
\label{pro2.8}
Let $R$ be a noncommutative AS regular algebra of dimension $2$ and $kG$
be a finite group algebra acting on $R$ with trivial homological determinant. 
Then, assuming that $G\neq 1$, we have the following statements.
\begin{enumerate}
\item
If $R \simeq k_J [ u, v ]$, then $G = C_2 = \langle
\sigma \mid \sigma^2 \rangle$ with $\sigma ( z ) = - z$ for
all $z \in R_1$.
\item
If $R \simeq k_q [ u, v ]$ and $q\neq \pm 1$,
then $G = C_n = \langle
\sigma \mid \sigma^n \rangle$ with $\sigma ( u ) = \zeta u$,
$\sigma ( v ) = \zeta^{- 1} v$, where $\zeta$ is a primitive
$n$-th root of unity.
\item
If $R \simeq k_{- 1} [ u, v ]$, then either
\begin{enumerate}
\item[$\ast$] $G=C_n$ with action as in part (b),
\item[$\ast$] $G = C_2 = \langle \sigma \mid \sigma^2 \rangle$ with
$\sigma ( u ) = v$ and $\sigma ( v ) = u$, or
\item[$\ast$] $G = D_{2 n} = \langle \sigma, \tau \mid \sigma^n, 
\tau^2, \sigma \tau \sigma \tau \rangle$ with $\sigma (u) = \zeta u$,
$\sigma ( v ) = \zeta^{- 1} v$ and $\tau (u) = v$, $\tau ( v ) = u$,
where $\zeta$ is a primitive $n$-th root of unity.
\end{enumerate}
\end{enumerate}
\end{proposition}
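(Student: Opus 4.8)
The plan is to exploit that a grading-preserving action of $G$ on $R = k\langle u,v\rangle/(r)$ is completely determined by its restriction to $R_1 = ku\oplus kv$, since $R$ is generated in degree one; inner faithfulness of the $kG$-action is then equivalent to faithfulness of this restriction. So first I would fix the basis $\{u,v\}$ and record each $g\in G$ by
\[
g(u) = au + bv, \qquad g(v) = cu + dv, \qquad \begin{pmatrix} a & c \\ b & d \end{pmatrix} \in \GL_2(k).
\]
Two constraints cut out the admissible matrices. Since $g$ is a graded algebra automorphism it must preserve the relation ideal, and the degree-two part of $(r)$ is exactly the line $kr$; hence $g\cdot r = \mu(g)\,r$ for a character $\mu\colon G\to k^{\times}$. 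By Theorem~\ref{thm2.1} the homological determinant $\hdet_{kG}R$ is trivial precisely when $kr$ is the trivial module, i.e.\ $\mu(g)=1$, so in fact $g\cdot r = r$ for every $g$. The whole argument then reduces to analysing this single equation in each of the three cases of Example~\ref{ex1.2}.

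For $R = k_q[u,v]$ I would expand $g\cdot r = g(v)g(u) - q\,g(u)g(v)$ in the free algebra and match coefficients against $r = vu - quv$. The $u^2$- and $v^2$-coefficients give $ac(1-q)=0=bd(1-q)$, and since $q\neq 1$ this forces $ac=bd=0$, so $g$ is monomial; comparing the $uv$- and $vu$-coefficients shows an anti-diagonal $g$ forces $q^2=1$. Thus when $q\neq\pm1$ every $g$ is diagonal, and triviality of $\hdet$ forces $g(u)=au$, $g(v)=a^{-1}v$, so $g\mapsto a$ embeds $G$ into $k^{\times}$; a finite subgroup of $k^{\times}$ is cyclic, giving (b). The Jordan plane is handled identically with $r=vu-uv-u^2$: matching coefficients forces $b=0$ and $d=a$, whence $g\cdot r = a^2 r$, so trivial $\hdet$ gives $a=\pm1$. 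A nontrivial off-diagonal entry $c$, or the value $a=1$, would produce an element of infinite order in characteristic zero, so finiteness leaves only $\sigma=-\id$, i.e.\ $G=C_2$, giving (a).

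The case $R=k_{-1}[u,v]$, with $r=vu+uv$, runs along the same lines: coefficient matching again yields $ac=bd=0$, so $g$ is monomial, but now both diagonal and anti-diagonal matrices survive the trivial-$\hdet$ condition (the surviving constraint is $ad=1$ in the diagonal case and $bc=1$ in the anti-diagonal case). The diagonal elements form a subgroup $N\cong C_n=\langle\sigma\rangle$ with $\sigma(u)=\zeta u$, $\sigma(v)=\zeta^{-1}v$ as above; if $G=N$ we are in the first case of (c). Otherwise I would choose an anti-diagonal $\tau$ with $\tau(u)=bv$, $\tau(v)=b^{-1}u$; a direct computation gives $\tau^2=\id$ and $\tau\sigma\tau^{-1}=\sigma^{-1}$, so $N$ has index two and $G=\langle\sigma,\tau\rangle\cong D_{2n}$. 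Rescaling $v\mapsto bv$ (which fixes $\sigma$ and only scales $r$) normalizes $\tau$ to the swap $\tau(u)=v$, $\tau(v)=u$, producing the swap $C_2$ (the degenerate $n=1$ case) and the dihedral groups $D_{2n}$ listed in (c).

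The genuinely delicate part is not the group theory but the bookkeeping: one must expand $g(u)g(v)$ and $g(v)g(u)$ in the free algebra and read off the four monomial coefficients correctly, and then, in the dihedral case, verify that the single rescaling $v\mapsto bv$ simultaneously normalizes $\tau$ to the swap, leaves $\sigma$ diagonal with the same eigenvalues, and preserves the relation $vu+uv$ up to scalar. The remaining ingredients — that finite subgroups of $k^{\times}$ are cyclic, and that characteristic zero forbids nontrivial unipotent elements of finite order — are standard.
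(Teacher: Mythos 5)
Your proposal is correct and follows essentially the same route as the paper: record each $g\in G$ as a matrix on $R_1$, use Theorem~\ref{thm2.1} to convert trivial homological determinant into the exact equation $g\cdot r=r$, match coefficients in the free algebra, and then split into the Jordan, generic $q$, and $q=-1$ cases (with the same finite-order argument killing the unipotent part in case (a) and the same diagonal/anti-diagonal dichotomy and rescaling normalization of $\tau$ in case (c)). The only cosmetic difference is your sign convention for the Jordan relation and the slightly loose phrase that ``$a=1$ produces an element of infinite order'' (true only when $c\neq 0$), neither of which affects the argument.
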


\begin{proof}
Suppose that  $\sigma \in G$ acts on $R$ by $\sigma ( u ) = 
a_{11} u + a_{21} v$ and $\sigma ( v ) = a_{12} u + a_{22} v$ for 
some $a_{i j} \in k$.  Since $k$ is algebraically closed, we can 
assume that the relation $r$ of $R$ is of the form $vu-quv-\eta u^2$ 
for $q \in k^{\times}$ and $\eta = 0,1$. Moreover,
\[
\begin{array}{rl}
    \sigma ( r )  = &
(  ( 1 - q )a_{11} a_{12} - \eta  a_{11}^2 ) u^2
+ ( a_{12} a_{21} - q a_{11} a_{22} - \eta a_{11} a_{21} )
    u v\\
&+ ( a_{11} a_{22} - q a_{12} a_{21} - \eta a_{11} a_{21} )
    v u  + (  ( 1 - q ) a_{21} a_{22} - \eta  a_{21}^2 ) v^2.
\end{array}
\]
By Theorem \ref{thm2.1}, the trivial homological condition implies that
$\sigma(r)=r$. Hence we have the equations
$$\begin{array}{rlcrl}
       ( 1 - q ) a_{11} a_{12} - \eta a_{11}^2  &= -\eta &
       \hspace{1em} & ( 1 - q )  a_{21} a_{22} - \eta a_{21}^2  &=
       0\\
       a_{11} a_{22} - q a_{12} a_{21} - \eta a_{11} a_{21}  &= 1 &  &
       a_{12} a_{21} - q a_{11} a_{22} - \eta a_{11} a_{21}  &= - q.
\end{array}$$

(a) Suppose that $q = \eta = 1$. The top two equations yield 
$a_{11}^2 = 1$ and $a_{21} = 0$. The bottom two equations then yield 
$a_{11} a_{22} = 1$, so $a_{11} = a_{22} = \pm 1$. Now 
$\sigma^m ( v ) = m a_{11}^{m - 1} a_{12} u + a^m_{11} v$. Since $\sigma$ 
has finite order and $a_{11} \neq 0$, we see that $a_{12} = 0$. This
proves the claim in part (a).

(b) Suppose that $\eta = 0$ and $q\neq \pm 1$. Since $q \neq 1$, the 
top two equations yield $a_{11} a_{12} = 0$ and $a_{21} a_{22} = 0$. 
Since $1- q^2 \neq 0$, the bottom two equations also give 
$a_{11} a_{22} = 1$ and $a_{12} a_{21} = 0$, so $a_{12} = a_{21} = 0$.
Since $G$ has finite order, it is cyclic with the action on $R$ as 
shown in  case (b).

(c) Lastly, suppose that $\eta = 0$ and $q = - 1$. First,
observe that if the equations $a_{11} a_{22} = 1$ and $a_{12} = a_{21} = 0$
still hold, then $G=C_n$ as in part (b).

The bottom two equations both become $a_{11} a_{22} + a_{12} a_{21} = 1$, 
so we have more solutions. From the top two equations, one of the 
$a_{i j}$ is zero. If $a_{12} = 0$ (resp. $a_{21} = 0$) then 
$a_{11} a_{22} = 1$, so $a_{21} = 0$ (resp. $a_{12} = 0$). So, for $\zeta$ 
a primitive $n$-th root of unity, we have that $\sigma_{\zeta}$ is an 
automorphism of $R$ in this case, where $\sigma_{\zeta}(u) = \zeta u$ 
and $\sigma_{\zeta}(v) = \zeta^{-1} v$.

If $a_{11} = 0$ or $a_{22} = 0$, then $a_{12} a_{21} =1$. So, 
$a_{12} = \lambda$, but up to base change, $a_{12} = a_{21} =1$. Thus 
we have another automorphism $\tau$ of $R = k_{-1}[u,v]$ with 
$\tau(u) = v $ and $\tau(v) = u$.  Now suppose that $\tau_{\alpha}$ 
with $\tau_{\alpha}(u) = \alpha v$ and $\tau_{\alpha}(v) = \alpha^{-1}u$ 
is in $G$ for $\alpha \in k^{\times}$. Then $\tau_{\alpha} \tau 
=: \sigma_{\alpha}$, which implies that $\alpha$ must be a root of 
unity. This shows that $G = \langle \tau \rangle$ or
$G = \langle \sigma_{\zeta}, \tau \rangle$. This proves the claim in (c).
\end{proof}

\begin{remark}
\label{rem2.9}
It is worth noting that $SL_{2}(k)$ does not contain any finite subgroup
isomorphic to a dihedral group $D_{2n}$. In other words, 
there is no inner faithful action of $D_{2n}$ on $k[u,v]$ having trivial 
homological determinant. In contrast, by part (c) of the above 
proposition, there does exist such an action on the (noncommutative) skew
polynomial ring $k_{-1}[u,v]$. This phenomenon should be further 
investigated. See Remark \ref{rem7.2}, for instance.
\end{remark}


\section{Self-dual modules}
\label{sec3}

We study self-dual modules of Hopf algebras in this section, objects
which are relevant to the material in the previous section (see
Corollary \ref{cor3.3}), and play a crucial role in the proof of
part (a) of Theorem \ref{thm0.4} in Section \ref{sec4}.

Given a left $A$-module $M$, there is a natural right $A$-module
structure on $M':=\Hom_k(M,k)$, which is defined by
$$ (f\bullet a)(m)=f(am)$$
for all $f\in M'$, $a\in A$ and $m\in M$. 
If $H$ is a Hopf algebra with
antipode $S$, then for every finite dimensional left $H$-module $M$, we
define a left $H$-module structure on $M'$ by
\begin{equation}
\label{E3.0.1}\tag{E3.0.1}
a \ast f= f\bullet S(a)
\end{equation}
for all $a\in H$ and $f\in M'$. Then $M'$ becomes a left $H$-module,
which is now denoted by $M^*$. 

\begin{definition}
\label{def3.1} Let $U$ denote a  left $H$-module. We say $U$ is {\it
self-dual} if $U\cong U^*$ as left $H$-modules.
\end{definition}

We have the following basic results.

\begin{lemma} \label{lem3.2}
Let $U$ and $V$ be simple left $H$-modules. Then the following statements hold.
\begin{enumerate}
\item
$U\cong V^*$ if and only if the trivial $H$-module $k$ is a quotient
module of $U\otimes V$.
\item
If $U\cong V^*$, then the quotient map $U\otimes V\to k$ is unique up to
a scalar.
\item
$U$ is self-dual if and only if $k$ is a quotient $H$-module of
$U^{\otimes 2}$ and if and only if $k$ is an $H$-submodule of
$U^{\otimes 2}$.
\end{enumerate}
\end{lemma}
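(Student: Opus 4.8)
The plan is to reduce the entire lemma to a single adjunction together with Schur's lemma. The key tool I would establish first is the natural isomorphism
$$\Hom_H(U\otimes V, k)\cong \Hom_H(U, V^*),$$
valid for all finite-dimensional left $H$-modules $U,V$, where $U\otimes V$ carries the diagonal action through $\Delta$ and $V^*=\Hom_k(V,k)$ is the dual module with action $a\ast f=f\bullet S(a)$. Concretely, a map $\phi:U\otimes V\to k$ corresponds to $\psi:U\to V^*$ via $\psi(u)(v)=\phi(u\otimes v)$, and $\psi$ is $H$-linear exactly when $\phi$ is; the verification reduces to the identity $\phi(a\cdot u\otimes v)=\phi(u\otimes S(a)v)$, which I would extract from the antipode axioms. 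Since $U,V$ are simple and $k$ is algebraically closed, $V^*$ is again simple and Schur's lemma gives $\dim_k\Hom_H(U,V^*)\in\{0,1\}$, the value being $1$ precisely when $U\cong V^*$.

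With this in place, parts (a) and (b) are immediate. A nonzero map into the one-dimensional module $k$ is automatically surjective, so ``$k$ is a quotient of $U\otimes V$'' means exactly $\Hom_H(U\otimes V,k)\neq 0$; by the adjunction and Schur this happens if and only if $U\cong V^*$, giving (a). When $U\cong V^*$ the same count gives $\dim_k\Hom_H(U\otimes V,k)=\dim_k\Hom_H(U,U)=1$, so the surjection $U\otimes V\to k$ is unique up to scalar, giving (b).

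For (c) and (d) I would lean on the formal properties of the contravariant self-equivalence $(-)^*$ on $H$-$\Mod_{f.d.}$: it preserves simplicity, interchanges submodules and quotient modules, fixes the trivial module ($k^*\cong k$), and satisfies $(U\otimes V)^*\cong V^*\otimes U^*$. For (c), by (a) the module $U$ is self-dual iff $\Hom_H(U,U^*)\neq 0$, while $U^*$ is self-dual iff $\Hom_H(U^*,(U^*)^*)\neq 0$; applying $(-)^*$ gives $\Hom_H(U,U^*)\cong\Hom_H((U^*)^*,U^*)$, which by the Schur symmetry $\Hom_H(A,B)\neq 0\Leftrightarrow\Hom_H(B,A)\neq 0$ (for simple $A,B$) is nonzero iff $\Hom_H(U^*,(U^*)^*)\neq 0$; hence $U$ is self-dual iff $U^*$ is. When this holds, applying $(-)^*$ to an isomorphism $U\cong U^*$ yields $(U^*)^*\cong U^*\cong U$. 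For (d), part (a) with $V=U$ gives that $U$ is self-dual iff $k$ is a quotient of $U^{\otimes 2}$; and dualizing an inclusion $k\hookrightarrow U^{\otimes 2}$ produces a surjection $(U^{\otimes 2})^*\cong (U^*)^{\otimes 2}\twoheadrightarrow k$, so $k$ is a submodule of $U^{\otimes 2}$ iff $k$ is a quotient of $(U^*)^{\otimes 2}$, i.e.\ (by (a)) iff $U^*$ is self-dual, i.e.\ (by (c)) iff $U$ is self-dual.

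The main obstacle I anticipate is entirely foundational: getting the adjunction $\Hom_H(U\otimes V,k)\cong\Hom_H(U,V^*)$ correct, since the antipode must be inserted in exactly the right slot and one must avoid silently assuming $S^2=\id_H$, which is \emph{not} hypothesized in this lemma. Once that identity and the formal properties of $(-)^*$ are secured, each statement drops out of (a) and Schur's lemma with no further computation; in particular the submodule-versus-quotient equivalence in (d), which would be automatic in the semisimple case, genuinely requires the duality-functor argument in the general setting.
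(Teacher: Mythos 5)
Your proposal is correct and follows essentially the same route as the paper: part (a) via the $\Hom$--$\otimes$ adjunction $\Hom_H(U\otimes V,k)\cong\Hom_H(U,V^*)$ plus Schur's lemma, (b) from the resulting one-dimensionality of the Hom space, and (c), (d) from the formal properties of the contravariant duality $(-)^*$. Your treatment of (c) is slightly more explicit about both directions of the equivalence, but the underlying mechanism is identical to the paper's.
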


\begin{proof} (a)
By $\Hom-\otimes$ adjointness \cite[Lemma 1.1]{BrownGoodearl},
there is an isomorphism
$$\Hom_H(U\otimes V, k)\cong \Hom_H(U,\Hom_k(V,k))=\Hom_H(U,V^*).$$
The rightmost term is nonzero if and only if $U\cong V^*$, as $U$
and $V$ are simple $H$-modules. The leftmost term is nonzero if and
only if $k$ is a quotient module of $U\otimes V$. The assertion
follows.

(b) The assertion follows from the fact that $\dim_k \Hom_H(U,V^*) =
\dim_k \Hom_H(U,U)=1$.

(c) The first assertion follows from part (a) by setting $V=U$. 
It is easy to see that $U$ is self-dual if and only if
$U^*$ is self-dual. The second
assertion follows by applying $(-)^*$.
\end{proof}

By \cite[Corollary 6]{KSZ}, if $H$ is semisimple and has a
nontrivial self-dual simple module, then the dimension of $H$ is
even.

\begin{corollary} \label{cor3.3}
Let $H$ be a finite-dimensional Hopf algebra acting inner faithfully
on an AS regular algebra $R=k\langle U \rangle/(r)$ of global
dimension 2. Suppose that $U$ is a simple $H$-module and that 
$\hdet_H R$ is trivial. Then
\begin{enumerate}
\item $k$ is an $H$-submodule of $U^{\otimes 2}$,
\item $k$ is a quotient $H$-module of $U^{\otimes 2}$, and
\item $U$ is a self-dual $H$-module.
\end{enumerate}
\end{corollary}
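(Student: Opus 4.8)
The plan is to deduce all three statements from the trivial-homological-determinant hypothesis together with the self-duality machinery developed in Lemma~\ref{lem3.2}. The key input is Theorem~\ref{thm2.1}: since $\hdet_H R$ is trivial, the one-dimensional module $kr$ is the trivial $H$-module $k$. Because $R = k\langle U\rangle/(r)$ has global dimension two, the relation space $kr$ sits inside the degree-two tensor component, i.e.\ $kr$ is an $H$-submodule of $U^{\otimes 2}$. Thus the trivial module $k \cong kr$ embeds into $U^{\otimes 2}$, which is exactly statement (a).

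For statement (c), I would invoke Lemma~\ref{lem3.2}(d): for a simple module $U$, self-duality is equivalent to $k$ being an $H$-submodule of $U^{\otimes 2}$ (and equivalent to $k$ being a quotient module of $U^{\otimes 2}$). Since part (a) already supplies the submodule, simplicity of $U$ lets us conclude immediately that $U$ is self-dual, giving (c). Statement (b) then follows either directly from Lemma~\ref{lem3.2}(d) (the quotient condition is part of the same equivalence) or from combining (c) with Lemma~\ref{lem3.2}(a) applied with $V = U$: self-duality $U \cong U^*$ is equivalent to $k$ being a quotient of $U^{\otimes 2}$. So all three parts are corollaries of the identification $kr \cong k$ and the simplicity of $U$.

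The only point requiring care is verifying that $kr$, as an $H$-module, genuinely embeds as a \emph{sub}module of $U^{\otimes 2}$ rather than merely appearing as a subquotient. This is where the global-dimension-two structure matters: the relation $r$ is a homogeneous element of degree two, so $kr \subseteq U \otimes U$ is an honest $H$-submodule (the $H$-action respects the grading and the multiplication $U\otimes U \to R_2$ is an $H$-map whose kernel contains $kr$). I expect this compatibility of the $H$-structure on $kr$ with the tensor-square embedding to be the main thing to nail down precisely; once it is in place, the rest is a direct application of Lemma~\ref{lem3.2}. The argument also tacitly uses $S^2 = \mathrm{id}_H$ so that $(U^*)^* \cong U$ and the duality is well-behaved, but this is guaranteed for the semisimple $H$ in the ambient hypotheses, and in any case only the submodule/quotient criteria of Lemma~\ref{lem3.2}(d) are strictly needed.
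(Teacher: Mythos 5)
Your proposal is correct and follows essentially the same route as the paper: the paper's proof is exactly "apply Theorem~\ref{thm2.1} to see that $U^{\otimes 2}$ contains $k$ as an $H$-submodule, then invoke Lemma~\ref{lem3.2}" (specifically part (d)), which is what you do in slightly more detail. Your extra care about $kr$ being an honest $H$-submodule of $U\otimes U$ is already built into the setup of Theorem~\ref{thm2.1}, where $kr$ is observed to be a left $H$-module because $R$ is a graded $H$-module algebra.
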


\begin{proof}
Applying Theorem \ref{thm2.1}, one sees that $U^{\otimes 2}$
contains $k$ as an $H$-submodule. The rest follows from Lemma
\ref{lem3.2}.
\end{proof}

Another application of self-duality pertains to the antipode $S$ of
$H$.

\begin{lemma} 
\label{lem3.4}
Let $H$ is a finite dimensional Hopf algebra. Suppose that $H$ has
an algebra decomposition $H=C\oplus I$ where $C$ and $I$ are ideals
of $H$ so that $C\cong M_n(k)$. Then, $S(C)\subset C$ if and only if
$C$ is self-dual as a left $H$-module (viewing $C$ as $H/I$).
\end{lemma}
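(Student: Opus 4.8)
The plan is to reduce the stated equivalence to the self-duality of the unique simple module of the block $C$, and then to translate that self-duality into how the antipode permutes the central idempotent cutting out $C$. First I would record the basic algebra facts. Since $C$ and $I$ are two-sided ideals with $H=C\oplus I$, we have $IC\subseteq I\cap C=0$ and likewise $CI=0$, so $I$ annihilates $C$ and the left $H$-module $C=H/I$ has its action factoring through the algebra projection $H\twoheadrightarrow H/I\cong M_n(k)$. Writing $C=e_CH=He_C$ for the central idempotent $e_C=1_C$, and letting $V\cong k^n$ be the unique simple $M_n(k)$-module, we get an isomorphism of left $H$-modules $C\cong V^{\oplus n}$. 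The first reduction is then immediate: $C^{*}\cong (V^{*})^{\oplus n}$, and since $V$ and $V^{*}$ are simple, Krull--Schmidt applied to these (semisimple, isotypic) modules gives that $C\cong C^{*}$ if and only if $V\cong V^{*}$. Thus \emph{$C$ is self-dual iff $V$ is self-dual}, and crucially this step does not require $H$ to be semisimple.

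Next I would compute the annihilator of the dual module. Because $V$ is a faithful $M_n(k)$-module and $I$ acts as zero, $\ann_H(V)=I$. Using the $H$-action $(a\ast f)(v)=f(S(a)v)$ on $V^{*}$ coming from \eqref{E3.0.1}, together with the fact that $V^{*}=\Hom_k(V,k)$ separates points of $V$, one checks that $a\in\ann_H(V^{*})$ exactly when $S(a)\in\ann_H(V)$, i.e. $\ann_H(V^{*})=S^{-1}(I)$. Since $V^{*}$ is simple, it is isomorphic to $V$ precisely when the two share an annihilator, so $V\cong V^{*}$ if and only if $S^{-1}(I)=I$, equivalently $S(I)=I$.

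Finally I would link $S(I)=I$ to the assertion $S(C)\subseteq C$. The antipode is an algebra anti-automorphism, so it carries the central idempotent $e_C$ to a central idempotent $S(e_C)$, and $S(e_C)\neq 0$ by injectivity. The only central idempotents contained in $C\cong M_n(k)$ are $0$ and $e_C$, so $S(C)=S(e_C)H\subseteq e_CH=C$ forces $S(e_C)=e_C$, i.e. $S(C)=C$ and hence $S(I)=I$; the converse is immediate. Chaining the three equivalences gives $S(C)\subseteq C\iff S(I)=I\iff V\cong V^{*}\iff C\cong C^{*}$, which is the claim.

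I expect the main obstacle to be bookkeeping in the non-semisimple setting: since $H$ need not be semisimple, $I$ is \emph{not} a block, so the convenient picture ``$S$ permutes the Wedderburn blocks'' is unavailable and must be replaced by the idempotent argument above, which only uses that $C$ is a simple-algebra direct factor cut out by a central idempotent. The most error-prone point is getting the $S$-twist correct in the identity $\ann_H(V^{*})=S^{-1}(\ann_H(V))$ and making sure the reduction $C\cong V^{\oplus n}$ and the ensuing Krull--Schmidt comparison are justified without appealing to semisimplicity of $H$ (only semisimplicity of the isotypic modules $V^{\oplus n}$ and $(V^{*})^{\oplus n}$ is needed).
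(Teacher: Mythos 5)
Your proof is correct and follows essentially the same route as the paper's: both arguments hinge on the computation $\ann_H(C^*)=S^{-1}(\ann_H(C))=S^{-1}(I)$ together with the fact that $I$ (equivalently the central idempotent $e_C$) is uniquely determined by $C$, so that $S(C)\subseteq C$ is equivalent to $S(I)=I$. Your reduction to the simple module $V$ via Krull--Schmidt and the explicit central-idempotent argument are just slightly more detailed packagings of steps the paper handles directly on $C$ (``$\dim C^*=\dim C$ and $C$ is a simple $k$-algebra'') and by appeal to the uniqueness of the ideal complement $I$.
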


\begin{proof}
Viewing $C$ as a left $H$-module, we have that $IC=0$. Then
$C':=\Hom_k(C,k)$ has a right $H$-module structure and $C' I=0$. The
dual $H$-module $C^*$ is a left $H$-module defined by
\eqref{E3.0.1}. Thus $S^{-1}(I)C^*=0$. Since $S$ is an
anti-automorphism of $H$,  we have that $S^{-1}(C)$ and 
$S^{-1}(I)$ are ideals of $H$ such that $H=S^{-1}(C)\oplus S^{-1}(I)$.

If $S(C)\subset C$ (or equivalently, $S^{-1}(C)=C=S(C)$ since $C$ is
finite dimensional and $S$ is bijective), then $S^{-1}(I)=I$ by the
fact that $I$ is uniquely determined by $C$. Hence $IC^*=0$. Since
$\dim C^*=\dim C$ and $C$ is a simple $k$-algebra, $C^*\cong C$ as
left $H$-modules. So, $C$ is self-dual.

Conversely, if $C$ is self-dual, then $IC^*=IC=0$, which implies
that $C' S(I)=0$. Since $I=r.\mathrm{ann}_H(C')$, $S(I)\subset I$. As a
consequence $S(I)=I$, which implies that $S(C)=C$.
\end{proof}

\noindent Note that Lemma \ref{lem3.4} can be applied when $H$ is semisimple,
in which case $H$ is a direct sum of simple rings.

Now, we turn our
attention to nondegenerate invariant bilinear forms. Let $U$ be a
simple left $H$-module. By $\Hom-\otimes$ adjointness, there is a
nondegenerate invariant bilinear form
$$\langle \cdot,\cdot\rangle: U^*\times U\to k \hspace{.1in}
\text{satisfying} \hspace{.1in}
\langle h_1\cdot u, h_2\cdot v\rangle =\epsilon(h) \langle
u,v\rangle$$
for all $h\in H$ and all $u\in U^*$ and $v\in U$. Here,
$\Delta(h) = \sum h_1 \otimes h_2$. Since $k$ is
algebraically closed,
$$\dim_k \Hom_H(U^*\otimes U,k)=\dim_k \Hom_H(U^*,U^*)=1.$$
This implies that the nondegenerate invariant bilinear form on
$U^*\times U$ is unique up to scalar multiplication.
Thus, if $U$ is self-dual, there is a nondegenerate invariant bilinear
form on $U$
$$\langle \cdot,\cdot\rangle: U\times U\to k \hspace{.1in}
\text{satisfying} \hspace{.1in}
\langle h_1\cdot u, h_2\cdot v\rangle =\epsilon(h) \langle
u,v\rangle$$
for all $h\in H$ and all $u,v\in U$.

A self-dual simple $H$-module admits a nondegenerate invariant
bilinear form which is either symmetric or skew-symmetric
\cite[Theorem 3]{KSZ}. Therefore, we proved the following lemma.

\begin{lemma} \label{lem3.5} \cite[Theorem 3]{KSZ}
Let $H$ be a semisimple Hopf algebra and $U$ be a self-dual 
simple left $H$-module.
Then there is a nondegenerate invariant bilinear form, unique up to
scalar multiplication,
$$\langle \cdot,\cdot\rangle: U\times U\to k.$$
Furthermore, such a nondegenerate invariant bilinear form
is either symmetric or skew-symmetric. \qed
\end{lemma}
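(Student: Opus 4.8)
The plan is to identify invariant bilinear forms on $U$ with morphisms in the category $H\text{-}\Mod_{f.d.}$, reduce existence and uniqueness to Schur's lemma, and then defer the symmetry dichotomy to the Frobenius--Schur theory of \cite{KSZ}.

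First I would observe that a bilinear form $\langle\cdot,\cdot\rangle\colon U\times U\to k$ satisfying the invariance condition $\langle h_1\cdot u,\,h_2\cdot v\rangle=\epsilon(h)\langle u,v\rangle$ is precisely an $H$-module map $U\otimes U\to k$, where $k$ carries the trivial $H$-action. Hence the space of invariant forms is $\Hom_H(U\otimes U,k)$. By the $\Hom-\otimes$ adjointness already used in Lemma~\ref{lem3.2}(a), this is isomorphic to $\Hom_H(U,U^*)$, and since $U$ is self-dual, to $\Hom_H(U,U)$. As $k$ is algebraically closed and $U$ is simple, Schur's lemma gives $\dim_k\Hom_H(U,U)=1$. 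This simultaneously yields the existence of a nonzero invariant form and its uniqueness up to scalar multiplication.

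Next I would check nondegeneracy. Under the adjunction above, a nonzero invariant form corresponds to a nonzero $H$-module map $U\to U^*$; since $U$ and $U^*$ are simple of the same finite dimension, such a map is an isomorphism, which is exactly the assertion that $\langle\cdot,\cdot\rangle$ is nondegenerate. Thus the essentially unique invariant form is automatically nondegenerate, and no separate verification is needed.

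The hard part is the final dichotomy, that the form is either symmetric or skew-symmetric. I would resist the temptation to argue that the transposed form $(u,v)\mapsto\langle v,u\rangle$ is again invariant and conclude by the uniqueness above, because the tensor flip $u\otimes v\mapsto v\otimes u$ is not an $H$-module endomorphism of $U\otimes U$ unless $H$ is cocommutative; so the transpose of an invariant form need not be invariant, and that one-line argument collapses. This is precisely where semisimplicity of $H$ must enter, and I would invoke the Hopf-algebraic Frobenius--Schur theory of \cite{KSZ}: the Frobenius--Schur indicator of the self-dual simple module $U$ equals $\pm1$ by \cite[Theorem~3]{KSZ}, and its sign records exactly whether the essentially unique invariant form on $U$ is symmetric or skew-symmetric. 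Quoting this result completes the proof. Establishing it from scratch, by computing the indicator through an integral of $H$ and relating its sign to the symmetry type of the form, is the genuine content and the main obstacle, which is why I would cite \cite{KSZ} rather than reprove it.
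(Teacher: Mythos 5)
Your proposal is correct and follows essentially the same route as the paper: the discussion preceding Lemma~\ref{lem3.5} likewise obtains existence, uniqueness up to scalar, and nondegeneracy from $\Hom$--$\otimes$ adjointness together with Schur's lemma applied to $\Hom_H(U^*\otimes U,k)\cong\Hom_H(U^*,U^*)$, and then cites \cite[Theorem 3]{KSZ} for the symmetric/skew-symmetric dichotomy. Your extra remark that the tensor flip is not $H$-linear for noncocommutative $H$ (so the naive transpose argument fails) is a correct observation, though not part of the paper's argument.
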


In particular, we use this lemma to study self-dual left
$H$-modules of dimension 2.

\begin{lemma} \label{lem3.6}
Let $H$ be a semisimple Hopf algebra and $U$ be a 2-dimensional 
self-dual simple
left $H$-module. Then, there is a basis of $U$, say $\{u, v\}$, such
that the trivial module $k$, as a direct summand in $U^{\otimes 2}$,
has a basis element of the form $u^2+v^2$ or $uv-vu$.
\end{lemma}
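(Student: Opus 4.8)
The plan is to obtain the invariant tensor spanning the trivial summand of $U^{\otimes 2}$ directly from the nondegenerate invariant bilinear form supplied by Lemma~\ref{lem3.5}, to match its symmetry type with the flip-symmetry of that tensor, and then to normalize the form by a purely $k$-linear change of basis.

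First I would apply Lemma~\ref{lem3.5}: since $U$ is a self-dual simple module over the semisimple Hopf algebra $H$, there is a nondegenerate invariant bilinear form $\langle\cdot,\cdot\rangle\colon U\times U\to k$, unique up to a scalar, which is \emph{either} symmetric \emph{or} skew-symmetric. The invariance identity $\langle h_1\cdot a,h_2\cdot b\rangle=\epsilon(h)\langle a,b\rangle$ says exactly that the map $U^{\otimes 2}\to k$, $a\otimes b\mapsto\langle a,b\rangle$, is a surjection of $H$-modules onto the trivial module. Dually, by Lemma~\ref{lem3.2}(d), the trivial module $k$ sits inside $U^{\otimes 2}$ as a submodule; and since $U$ is self-dual and simple, $\Hom_H(U^{\otimes 2},k)\cong\Hom_H(U,U^*)\cong\Hom_H(U,U)=k$ is one-dimensional, so by semisimplicity $k$ occurs in $U^{\otimes 2}$ with multiplicity exactly one. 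Hence its copy is spanned by a single invariant tensor $\omega\in U^{\otimes 2}$, and the whole problem reduces to computing $\omega$ up to scalar.

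The technical heart is to show that $\omega$ has the same (skew-)symmetry under the coordinate flip as the form does. By construction (the discussion preceding Lemma~\ref{lem3.5}), the form is the pairing attached to the $H$-module isomorphism $\theta\colon U\xrightarrow{\sim}U^*$ given by self-duality via \eqref{E3.0.1}. I would produce $\omega$ as the image of the canonical invariant coevaluation element $\sum_i x_i\otimes x_i^{*}\in U\otimes U^{*}$ (which is invariant because it corresponds to $\id_U$) under the module map $\id_U\otimes\theta^{-1}$. Fixing a basis $\{x_1,x_2\}$ with Gram matrix $B=(\langle x_i,x_j\rangle)$, a short computation then gives $\omega=\sum_{i,j}(B^{-1})_{ji}\,x_i\otimes x_j$ up to scalar. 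Since $B$ and $B^{-1}$ share the same symmetry type, $\omega$ is fixed by the flip $x_i\otimes x_j\mapsto x_j\otimes x_i$ when $B$ is symmetric and negated by it when $B$ is skew-symmetric. I expect \emph{this paragraph to be the main obstacle}: one must verify the $H$-equivariance of $\theta$ (where the antipode enters through \eqref{E3.0.1}) and argue the symmetry statement at the level of the explicit tensor $\omega$, because for a general non-cocommutative $H$ the flip is not $H$-linear, so $\mathrm{Sym}^2U$ and $\wedge^2U$ need not be submodules and cannot be invoked abstractly.

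Finally I would normalize $B$ by a $k$-linear change of basis of $U$; this need not be $H$-linear, which is harmless since the statement only requires a $k$-basis. As $k$ is algebraically closed, a nondegenerate symmetric form diagonalizes to the identity, giving a basis $\{u,v\}$ with $B=I$ and thus $\omega=u\otimes u+v\otimes v$, i.e.\ $u^2+v^2$; a nondegenerate skew form on a $2$-dimensional space can be brought to the standard symplectic shape $\bigl(\begin{smallmatrix}0&1\\-1&0\end{smallmatrix}\bigr)$, giving $\omega=u\otimes v-v\otimes u$, i.e.\ $uv-vu$. These are exactly the two asserted normal forms, so the lemma follows. The diagonalization step is routine linear algebra once the symmetry transfer of the previous paragraph is in hand.
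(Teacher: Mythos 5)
Your proposal is correct and follows essentially the same route as the paper: both invoke Lemma~\ref{lem3.5} to obtain the (skew-)symmetric nondegenerate invariant form, dualize it to produce the invariant tensor spanning the trivial summand of $U^{\otimes 2}$, and finish with the standard linear-algebra normalization of the Gram matrix. The only cosmetic difference is that the paper applies Lemma~\ref{lem3.5} to $V=U^{*}$ and takes the dual of $\vartheta\colon V\otimes V\to k$ directly, whereas you work on $U$ itself via the coevaluation element and $\theta^{-1}$; these give the same invariant tensor up to the canonical identifications.
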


\begin{proof}
Let $V = U^{\ast}$. Then $V$ is a $2$-dimensional self-dual
$H$-module. By Lemma \ref{lem3.5}, we can choose a bilinear form
$\langle \cdot, \cdot \rangle : V \times V \longrightarrow k$ which
induces a morphism $\vartheta : V \otimes V \longrightarrow k$ of
$H$-modules where $k$ is the trivial $H$-module. Consider the dual
morphism $\vartheta^{\ast} : k \longrightarrow \left( V \otimes V
\right)^{\ast} \simeq V^{\ast} \otimes V^{\ast} \simeq U^{\otimes
2}$. This map is split since $H$ is semisimple, that is,
$\vartheta^{\ast} \left( 1 \right)$ generates a trivial direct
summand of $U^{\otimes 2}$.

We now show that we can choose a basis such that $\vartheta^{\ast}
\left( 1 \right)$ has the desired form. By Lemma \ref{lem3.5},
$\langle \cdot, \cdot \rangle$ is either symmetric or
skew-symmetric, so by linear algebra, we can choose a basis $\left\{
v_1, v_2 \right\}$ of $V$ such that $\vartheta \left( v_i \otimes
v_j \right) = \delta_{i j}$ (if $\langle \cdot, \cdot \rangle$ is
symmetric) or $\vartheta \left( v_i \otimes v_j \right) = \left( - 1
\right)^i \left( \delta_{i j} - 1 \right)$ (if $\langle \cdot, \cdot
\rangle$ is skew-symmetric). Therefore $\vartheta^{\ast} \left( 1
\right) = v^{\ast}_1 \otimes v^{\ast}_1 + v^{\ast}_2 \otimes
v^{\ast}_2$ or $\vartheta^{\ast} \left( 1 \right) = v^{\ast}_1
\otimes v^{\ast}_2 - v_2^{\ast} \otimes v_1^{\ast}$ as desired.
\end{proof}


\section{Proof of Theorem \ref{thm0.4}: $H$ is noncommutative and semisimple}
\label{sec4}

The goal of this section is to prove Theorem \ref{thm0.4} in the case 
where $H$ is a semisimple Hopf algebra and the $H$-module $U$ of 
$R=k \langle U \rangle / (r)$ is simple. The first step is to show that $H$ is
noncommutative precisely when  $U$ is not a direct sum of two 
1-dimensional simple left $H$-modules.

\begin{lemma} \label{lem4.1}
Let $H$ be a finite dimensional Hopf algebra that acts on $R$ as above. Then, we have the following statements.
\begin{enumerate}
\item
The Hopf algebra $H$ is commutative if and only if the 2-dimensional
left $H$-module $U$ is a direct sum of two 1-dimensional simple $H$-modules.
\item
We have that $H$ is noncommutative if and only if $U$ is not a direct sum of two
1-dimensional simple $H$-modules.
\item
Suppose that $H$ is semisimple. Then, $H$ is noncommutative if and only if
$U$ is simple.
\end{enumerate}
\end{lemma}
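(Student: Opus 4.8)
The plan is to make part (a) the heart of the argument, since part (b) is simply its contrapositive and part (c) follows from (a)/(b) once one notes that a $2$-dimensional module over a semisimple algebra can only be either simple or a direct sum of two $1$-dimensional simples. For part (a) I would pass to the dual Hopf algebra $K=H^{\circ}$ and exploit two facts: that $U$ is inner faithful as a left $H$-module if and only if it is inner faithful as a right $K$-comodule (Lemma~\ref{lem1.5}(a), using that inner faithfulness on $R$ is equivalent to inner faithfulness on the generating module $U$ under Hypothesis~\ref{hyp0.1}), and that $H$ is commutative precisely when $K=H^{*}$ is cocommutative.

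For the direction $(\Leftarrow)$ of (a), suppose $U=ku\oplus kv$ with $h\cdot u=\chi_1(h)\,u$ and $h\cdot v=\chi_2(h)\,v$ for characters $\chi_i\colon H\to k$. Each $\chi_i$ is an algebra homomorphism, hence a grouplike element $g_i$ of $K$, and the corresponding coaction is $\rho(u)=u\otimes g_1$, $\rho(v)=v\otimes g_2$. The smallest subcoalgebra $C$ with $\rho(U)\subseteq U\otimes C$ is then $C=kg_1+kg_2$. Inner faithfulness together with Lemma~\ref{lem1.5}(b) forces $K$ to be generated as an algebra by $\bigcup_{n\geq 0}S^{n}(C)$; since the antipode sends grouplikes to grouplikes and products of grouplikes are again grouplike, this means $K$ is spanned by its grouplike elements. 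As grouplikes are linearly independent and closed under product and inverse, $K=kG'$ is a group algebra, hence cocommutative, and dualizing gives $H=K^{*}$ commutative.

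For $(\Rightarrow)$ of (a), if $H$ is commutative then $K=H^{\circ}$ is cocommutative, so by the Cartier--Kostant--Gabriel structure theorem in characteristic $0$ we get $K=kG'$ for a finite group $G'$, whence $H\cong(kG')^{*}\cong k^{G'}$ is semisimple with all simple modules $1$-dimensional. The $2$-dimensional module $U$ therefore splits as a direct sum of two $1$-dimensional simples. Part (b) is then the contrapositive of (a). For part (c), semisimplicity of $H$ makes $U$ a semisimple module, and the only $2$-dimensional semisimple modules are the simple ones and the direct sums of two $1$-dimensional simples; combining this dichotomy with (b) yields that $H$ is noncommutative if and only if $U$ is simple.

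I expect the main obstacle to be the forward direction of (a): one must correctly read off the grouplikes $g_1,g_2\in K$ from the character action, track the antipode on them, and then convert the inner-faithfulness hypothesis through Lemma~\ref{lem1.5}(b) into the statement that $K$ is \emph{spanned} (not merely generated) by grouplikes, so that $K$ is genuinely a group algebra. The reverse direction is comparatively routine, resting on the characteristic-$0$ structure theory for cocommutative Hopf algebras, which I would cite rather than reprove.
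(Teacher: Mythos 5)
Your proposal is correct, and parts (b) and (c) are handled exactly as in the paper; the difference lies in the converse direction of (a). The paper stays entirely on the $H$-side: if $U=T_1\oplus T_2$ with each $T_i$ one-dimensional, every element of $H$ acts on $U$ by scalars, so the Hopf ideal generated by the commutators $[H,H]$ annihilates $U$, and inner faithfulness forces this ideal to vanish, i.e.\ $H$ is commutative. You instead dualize: the characters defining the $T_i$ become grouplike elements $g_1,g_2$ of $K=H^{\circ}$, the smallest subcoalgebra $C$ with $\rho(U)\subseteq U\otimes C$ is $kg_1+kg_2$, and Lemma~\ref{lem1.5}(b) plus the fact that antipodes and products of grouplikes are again grouplike shows $K$ is spanned by grouplikes, hence is a group algebra, hence cocommutative, hence $H$ is commutative. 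Both arguments are sound and both hinge on inner faithfulness; the paper's is shorter and avoids invoking the action/coaction dictionary, while yours buys the extra structural information that $K$ is literally a group algebra $kG'$ (so $H\cong k^{G'}$), which is the form the paper later exploits anyway in Section~\ref{sec5}. In the forward direction you justify the paper's one-line claim that a finite-dimensional commutative Hopf algebra in characteristic zero is semisimple with one-dimensional simples by citing the Cartier--Kostant structure theorem for the cocommutative dual; that is a legitimate and arguably more transparent justification of the same step. One small point of care you already flag correctly: the passage from ``generated by grouplikes'' to ``spanned by grouplikes'' needs the closure of the set of grouplikes under multiplication and the antipode, which holds, so there is no gap.
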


\begin{proof} (a) If $H$ is commutative, then $H$ is semisimple and all of
its simple modules are 1-dimensional. Therefore $U$ is a direct sum
of two 1-dimensional simple $H$-modules. Conversely, assume that $H$ acts on a
direct sum of 1-dimensional left $H$-modules $U = T_1 \oplus T_2$.
Then, each element in $H$ acts on $T_i$ as a scalar multiplication,
and hence the Hopf ideal $[H,H]$ acts as zero on $T_i$. Thus, the
$H$-action on $U$ factors through $[H,H]$, and $H$ is commutative.

(b) This is equivalent to (a).

(c) Suppose $H$ is semisimple. Then $U$ must be simple as it is not
a direct sum of two 1-dimensional simple $H$-modules. Then, part (c) is
equivalent to part (b).
\end{proof}

Now we specify the AS regular algebras $R$ of global dimension two that
occur in Theorem \ref{thm0.4}, when $H$ is noncommutative and semisimple.

\begin{proposition}
\label{pro4.2}
Assume Hypothesis \ref{hyp0.3}. If $H$ is noncommutative, then the
algebra $R$ is isomorphic to either $k[u,v]$ or $k_{-1}[u,v]$.
\end{proposition}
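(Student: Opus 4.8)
The plan is to feed the self-duality machinery of Section~\ref{sec3} into the explicit description of the homological determinant from Theorem~\ref{thm2.1}, and then to read off the algebra from the shape of the defining relation. First, since $H$ is semisimple and noncommutative, Lemma~\ref{lem4.1}(c) tells us that $U=ku\oplus kv$ is a \emph{simple} $H$-module. Because the action has trivial homological determinant, Corollary~\ref{cor3.3} then shows that $U$ is self-dual and that the trivial module $k$ occurs as an $H$-submodule of $U^{\otimes 2}$. Applying Lemma~\ref{lem3.6}, I would fix a basis $\{u,v\}$ of $U$ adapted to the invariant bilinear form, so that the trivial summand of $U^{\otimes 2}$ is spanned by either $u^2+v^2$ (symmetric case) or $uv-vu$ (skew-symmetric case).

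The crucial step is to force the defining relation into one of these two shapes. Writing $R=k\langle U\rangle/(r)$, the one-dimensional relation space $kr\subseteq U^{\otimes 2}$ is $H$-stable (as already observed in the proof of Theorem~\ref{thm2.1}), and triviality of $\hdet_H R$ identifies $kr$ with the trivial $H$-module by that same theorem. On the other hand, the trivial module appears in $U^{\otimes 2}$ with multiplicity one: by $\Hom$--$\otimes$ adjointness together with self-duality and Schur's lemma,
$$\dim_k\Hom_H(U^{\otimes 2},k)=\dim_k\Hom_H(U,U^*)=\dim_k\Hom_H(U,U)=1.$$
Hence the trivial submodule of $U^{\otimes 2}$ is unique, so $kr$ must coincide with the summand produced by Lemma~\ref{lem3.6}. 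After the change of basis, this means $r$ is a nonzero scalar multiple of either $uv-vu$ or $u^2+v^2$.

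It then remains to identify $R$ in each case. If $r$ is proportional to $uv-vu$, then $R=k\langle u,v\rangle/(uv-vu)=k[u,v]$ is the commutative polynomial ring. If $r$ is proportional to $u^2+v^2$, I would introduce $x=u+\mathbf{i}v$ and $y=u-\mathbf{i}v$, where $\mathbf{i}\in k$ satisfies $\mathbf{i}^2=-1$ (available since $k$ is algebraically closed of characteristic zero); a direct computation gives $xy+yx=2(u^2+v^2)$, so modulo $u^2+v^2=0$ we get $xy=-yx$, whence $R\cong k_{-1}[u,v]$. Both relations correspond to nondegenerate quadratic forms, consistent with $R$ being AS regular of global dimension two [Example~\ref{ex1.2}].

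The conceptual heart of the argument — and the part I expect to require the most care — is the multiplicity-one computation that pins $kr$ down to the canonical trivial summand of Lemma~\ref{lem3.6}; once the relation is \emph{forced} into one of the two standard forms, the remaining classification is just the linear-algebraic change of variables exhibiting $k\langle u,v\rangle/(u^2+v^2)$ as $k_{-1}[u,v]$.
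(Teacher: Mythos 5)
Your proof is correct and follows essentially the same route as the paper's: Lemma~\ref{lem4.1}(c) gives simplicity of $U$, Corollary~\ref{cor3.3} gives self-duality, and Lemma~\ref{lem3.6} pins the relation down to $u^2+v^2$ or $uv-vu$. The only difference is that you spell out the multiplicity-one argument identifying $kr$ with the canonical trivial summand (this is the paper's Lemma~\ref{lem3.2}(b), left implicit in its one-line appeal to Lemma~\ref{lem3.6}) and the explicit change of variables $x=u+\mathbf{i}v$, $y=u-\mathbf{i}v$ exhibiting $k\langle u,v\rangle/(u^2+v^2)\cong k_{-1}[u,v]$, both of which the paper simply asserts.
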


\begin{proof}
Note that $R$ is an AS regular algebra of global dimension two,
generated in degree one, of the form $k\langle U\rangle /(r)$ for some
$r\in U^{\otimes 2}$. By Lemma \ref{lem4.1}(c), $U$ is simple. By
Corollary \ref{cor3.3}, $U$ is self-dual. Also, by 
Lemma~\ref{lem3.6}, $r$ is either $u^2+v^2$ (which is equivalent to
$r=uv+uv$ after a base change) or $uv-vu$, so the assertion follows.
\end{proof}

We require the following results pertaining to an anti-homomorphism
of a matrix coalgebra. Let $C = M_n \left( k \right)$ be a matrix
coalgebra over an algebraically closed field $k$. Let $\tau : C
\longrightarrow C$ denote the transpose, and $c_N : C
\longrightarrow C$ denote conjugation by $N \in G L_n \left( k
\right)$. Let $S$ and $T$ be $k$-linear automorphisms of $C$. We say
that $S$ is {\it equivalent} to $T$ if $  S  =  c_N^{- 1} \circ T
\circ c_N$ for some $N \in G L_n \left( k \right)$. Note that $\tau$
and $c_M$ obey the following commutation law
\begin{equation}
\tau \circ c_{M^{- 1}} = c_{M^T} \circ \tau .
\label{E4.2.1}\tag{E4.2.1}
\end{equation}
We establish the following lemma.

\begin{lemma}
\label{lem4.3}
Let $M \in G L_n \left( k \right)$, then $c_M \circ \tau$ is equivalent to
$c_{P^T M P} \circ \tau$ for any $P \in G L_n \left( k \right)$.
\end{lemma}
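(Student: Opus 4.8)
The plan is to exhibit an explicit matrix $N$ realizing the equivalence, namely $N = P^T$, and to verify it by a short computation built from two elementary facts: conjugation is multiplicative, $c_A \circ c_B = c_{AB}$ and $c_A^{-1} = c_{A^{-1}}$, together with the commutation law \eqref{E4.2.1}. First I would rewrite \eqref{E4.2.1} in the more convenient form $\tau \circ c_N = c_{(N^T)^{-1}} \circ \tau$, which follows by substituting $M^{-1} = N$ (so that $M^T = (N^T)^{-1}$) into the stated identity.

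With these two tools in hand, I would compute the conjugate $c_N^{-1}\circ c_{P^T M P}\circ \tau \circ c_N$ directly. Pushing $\tau$ past $c_N$ via the rewritten commutation law turns $\tau \circ c_N$ into $c_{(N^T)^{-1}}\circ \tau$, and then collapsing the three conjugations with multiplicativity yields
\[
c_N^{-1}\circ c_{P^T M P}\circ \tau \circ c_N
= c_{N^{-1}\, P^T M P\,(N^T)^{-1}}\circ \tau .
\]
Hence $c_M\circ\tau$ is equivalent to $c_{P^T M P}\circ\tau$ precisely when one can choose $N$ so that $N^{-1} P^T M P (N^T)^{-1}$ agrees with $M$ (up to a nonzero scalar, since $c_A = c_B$ whenever $A$ and $B$ differ by a scalar).

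The \emph{key step} is the choice $N = P^T$. Then $N^{-1} = (P^T)^{-1} = (P^{-1})^T$ and $(N^T)^{-1} = P^{-1}$, so the transposed outer factors cancel the inner ones exactly:
\[
N^{-1}\,P^T M P\,(N^T)^{-1}
= (P^T)^{-1}\,P^T\, M\, P\, P^{-1} = M .
\]
Substituting back gives $c_N^{-1}\circ c_{P^T M P}\circ \tau \circ c_N = c_M\circ\tau$, which is exactly the asserted equivalence with $N = P^T$.

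I do not expect a genuine obstacle, as the content is purely formal. The only place to be careful is the bookkeeping of transposes and inverses when applying \eqref{E4.2.1}: in particular one must remember that $(P^{-1})^T = (P^T)^{-1}$, so that the factors produced by the conjugation and by the commutation law cancel cleanly and give $M$ on the nose, rather than merely up to a scalar.
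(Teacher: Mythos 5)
Your proof is correct and takes essentially the same route as the paper: both are one-line formal verifications combining the multiplicativity of conjugation with the commutation law \eqref{E4.2.1}. The only cosmetic difference is that you conjugate $c_{P^TMP}\circ\tau$ by $c_{P^T}$ to recover $c_M\circ\tau$, while the paper conjugates $c_M\circ\tau$ by $c_{P^{-1}}$ to produce $c_{P^TMP}\circ\tau$ --- the same computation read in the opposite direction, with the choice of conjugating matrix adjusted to your stated convention $c_A\circ c_B=c_{AB}$.
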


\begin{proof}
It suffices to show that $c_{P^{- 1}}^{- 1} \circ c_M \circ \tau
\circ c_{P^{- 1}} = c_{P^T M P} \circ \tau$. Using (\ref{E4.2.1}) we
have that
$$c_{P^{- 1}}^{-1} \circ c_M \circ \tau \circ c_{P^{- 1}} ~= ~ c_P \circ
c_M \circ c_{P^T} \circ \tau ~= ~~ c_{P^T M P} \circ \tau.$$
\vspace{-.6in}
\end{proof}

\vspace{.5in}

\begin{proposition} \label{pro4.4}
Let $C = M_2 \left( k \right)$ and $S : C \longrightarrow C$ be an
anti-automorphism of order $2$. Then $S$ is equivalent to $c_W \circ
\tau$ where
\begin{eqnarray*}
    W & = & \text{\footnotesize{$ \left(\begin{array}{cc}
      0 & 1\\
      1 & 0
    \end{array}\right)$}} \hspace{.1in} \text{or} \hspace{.1in}
\text{\footnotesize{$ \left(\begin{array}{cc}
      0 & 1\\
      - 1 & 0
    \end{array}\right)$}}.
  \end{eqnarray*}
\end{proposition}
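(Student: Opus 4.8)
The plan is to reduce an arbitrary anti-automorphism to a normal form built from the transpose, impose the order-$2$ condition to constrain the conjugating matrix, and then exploit the congruence freedom of Lemma~\ref{lem4.3} to pin down $W$.

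First I would observe that since $\tau$ is itself an anti-automorphism of $C = M_2(k)$, the composite $S\circ\tau$ is a genuine $k$-algebra automorphism of $M_2(k)$. By the Skolem--Noether theorem every such automorphism is inner, so $S\circ\tau = c_M$ for some $M\in GL_2(k)$; since $\tau^2 = \id$, this rearranges to $S = c_M\circ\tau$. Thus from the outset we may assume $S$ has this shape.

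Next I would extract the constraint on $M$ imposed by $S^2 = \id$. Rewriting the commutation law \eqref{E4.2.1} as $\tau\circ c_M = c_{(M^T)^{-1}}\circ\tau$ and using $\tau^2 = \id$, a short computation gives
\[
S^2 = c_M\circ\tau\circ c_M\circ\tau = c_M\circ c_{(M^T)^{-1}} = c_{M(M^T)^{-1}}.
\]
Hence $S^2 = \id$ forces $c_{M(M^T)^{-1}} = \id$, i.e. $M(M^T)^{-1} = \lambda I$ for some $\lambda\in k^\times$, so $M = \lambda M^T$. Transposing this identity yields $M^T = \lambda M$, and resubstituting gives $\lambda^2 = 1$. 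Therefore $\lambda = \pm 1$, meaning $M$ is either \emph{symmetric} (when $\lambda = 1$) or \emph{skew-symmetric} (when $\lambda = -1$).

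Finally I would normalize $M$ inside its congruence class. By Lemma~\ref{lem4.3}, $c_M\circ\tau$ is equivalent to $c_{P^T M P}\circ\tau$ for every $P\in GL_2(k)$, so it suffices to put $M$ into standard form under the action $M\mapsto P^T M P$. Since $k$ is algebraically closed of characteristic zero, every nondegenerate symmetric bilinear form on $k^2$ is congruent to any fixed nondegenerate symmetric matrix, in particular to $W = \bigl(\begin{smallmatrix}0&1\\1&0\end{smallmatrix}\bigr)$, while every nondegenerate alternating form is congruent to $W = \bigl(\begin{smallmatrix}0&1\\-1&0\end{smallmatrix}\bigr)$. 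This produces exactly the two listed matrices. The calculation itself is routine once Lemma~\ref{lem4.3} is available; the only points requiring care are the passage $S = c_M\circ\tau$ via Skolem--Noether and the correct invocation of the classical congruence classification of (symmetric versus alternating) bilinear forms over an algebraically closed field, which is where the characteristic-zero hypothesis and the symmetric/skew dichotomy enter.
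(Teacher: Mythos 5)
Your proof is correct and follows essentially the same route as the paper: Skolem--Noether to write $S$ as a conjugation composed with $\tau$, the order-two condition via \eqref{E4.2.1} to force the conjugating matrix to be symmetric or skew-symmetric, and Lemma~\ref{lem4.3} together with the congruence classification of nondegenerate bilinear forms to normalize $W$. The only cosmetic differences are that you compose $S\circ\tau$ rather than $\tau\circ S$ and derive $\lambda^2=1$ by transposing $M=\lambda M^T$ instead of by the paper's determinant argument.
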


\begin{proof}
Since $S : C \longrightarrow C$ is an anti-automorphism, the composition
$\tau \circ S : C \longrightarrow C$ is an algebra automorphism. By the
Skolem-Noether theorem,
the automorphism $\tau \circ S$ is inner, so we can write
$\tau \circ S =c_{W^{- 1}}$ for some $W \in GL_2 (k)$.
Using \eqref{E4.2.1}, we obtain $S = \tau \circ c_{W^{- 1}}
= c_{W^T} \circ \tau$. Since $S$ has order $2$, we obtain, again
using \eqref{E4.2.1}, that
$$  1  ~ = ~  S^2
           ~ = ~ c_{W^T} \circ \tau \circ c_{W^T} \circ \tau
           ~ = ~ c_{W^T} \circ c_{W^{- 1}} \circ \tau \circ \tau
           ~ = ~ c_{W^{- 1} W^T}.$$
In other words, $W^{- 1} W^T \in Z \left( C \right)$, so
$W^{- 1} W^T$ is a
scalar matrix. Furthermore $\det \left( W^{- 1} W^T \right) = 1$,
from which we conclude that $W^{- 1} W^T = \pm I_{2\times 2}$.
Finally the assertion follows by applying Lemma \ref{lem4.3}.
\end{proof}

Now, we prove Theorem \ref{thm0.4} in the case that $H$ is 
noncommutative and semisimple.

\begin{theorem}
\label{thm4.5} 
Let $H$ be a noncommutative, semisimple Hopf algebra acting on an
algebra $R$ satisfying Hypothesis \ref{hyp0.3}. Then, 
$R=k_{\pm1}[u,v]$ and $U=ku\oplus kv$ is a simple $H$-module. Moreover, 
we have the following statements.
\begin{enumerate}
\item[(a1)]
If $R=k[u,v]$ and $H$ is cocommutative,
then $H\cong k\tilde{\Gamma}$ where $\tilde{\Gamma}$ is a non-abelian
binary polyhedral group.
\item[(a2)]
If $R=k_{-1}[u,v]$ and $H$ is cocommutative, then $H\cong kD_{2n}$
for $n\geq 3$.
\item[(a3)]
If $H$ is noncocommutative, then $R=k_{-1}[u,v]$ and $H^{\circ}$ is a
finite dimensional Hopf quotient of $\cal{O}_{-1}(SL_2(k))$.
\end{enumerate}
\end{theorem}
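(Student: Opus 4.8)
The plan is to dualize the action and exploit the universal coaction of $\mathcal{O}_q(SL_2(k))$ on the quantum plane. By Lemma~\ref{lem4.1}(c) the module $U$ is simple, by Corollary~\ref{cor3.3} it is self-dual, and by Lemma~\ref{lem3.6} the relation $r$ may be taken to be $u^2+v^2$ or $uv-vu$; after the base change $u\mapsto u+iv$, $v\mapsto u-iv$ the former presents $k_{-1}[u,v]$, while the latter presents $k[u,v]$. Passing to the coaction of $K:=H^{\circ}$ on $R$, the simple two-dimensional comodule $U$ has matrix coefficients $x_{ij}\in K$ spanning the smallest subcoalgebra $C$ with $\rho(U)\subseteq U\otimes C$ (Lemma~\ref{lem1.5}), and $\rho(u)=u\otimes x_{11}+v\otimes x_{21}$, $\rho(v)=u\otimes x_{12}+v\otimes x_{22}$ as in Example~\ref{ex:OqSL2}.

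The structural input is the antipode. Since $U$ is self-dual, Lemma~\ref{lem3.4} shows the matrix block $M_2(k)$ of $H$ carrying $U$ is stable under $S$, and $S^2=\mathrm{id}$ (Hypothesis~\ref{hyp0.3}) makes $S$ restrict to an order-two anti-automorphism there. By Proposition~\ref{pro4.4} this restriction is equivalent to $c_W\circ\tau$ with $W$ the symmetric or the skew normal form; the two cases correspond, through Lemma~\ref{lem3.5}, to the symmetric/skew-symmetric invariant form on $U$, hence to $r=u^2+v^2$ or $r=uv-vu$ and to $q=-1$ or $q=1$. Working in a basis adapted to $W$, the compatibility of $\rho$ with the single relation $r$ forces the $x_{ij}$ to satisfy the commutation relations of $\mathcal{O}_q(SL_2(k))$, while triviality of $\hdet_H R$ dualizes (Definition~\ref{def1.6}) to triviality of the homological codeterminant of $K$, i.e.\ the quantum determinant of $(x_{ij})$ equals $1$. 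Hence $e_{ij}\mapsto x_{ij}$ defines a surjective Hopf algebra map $\mathcal{O}_q(SL_2(k))\to K'$ onto the sub-Hopf-algebra generated by the $x_{ij}$; since this image is closed under $S$, it contains $\bigcup_{n\geq0}S^n(C)$, so inner faithfulness and Lemma~\ref{lem1.5}(b) give $K'=K$. Thus $H^{\circ}$ is a finite-dimensional Hopf quotient of $\mathcal{O}_q(SL_2(k))$.

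This already settles (a3): there $q=-1$ and $H$ is noncocommutative, so $H^{\circ}$ is a finite-dimensional Hopf quotient of $\mathcal{O}_{-1}(SL_2(k))$, with the precise list recorded in Corollary~\ref{cor4.6}. For (a1) we have $q=1$, and $\mathcal{O}(SL_2(k))$ is commutative, so every finite-dimensional Hopf quotient is the coordinate ring $\mathcal{O}(\tilde{\Gamma})$ of a reduced (since $\ch k=0$) finite subgroup $\tilde{\Gamma}\subseteq SL_2(k)$; dualizing gives $H\cong k\tilde{\Gamma}$, and noncommutativity of $H$ forces $\tilde{\Gamma}$ nonabelian, a binary polyhedral group. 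Case (a2) is handled separately: a finite-dimensional cocommutative Hopf algebra in characteristic zero is a group algebra $kG$, so the action is linear with trivial homological determinant, and Proposition~\ref{pro2.8}(c) lists $G$ as $C_n$, a transposition $C_2$, or $D_{2n}$; simplicity of $U$ eliminates all but the standard representation of $D_{2n}$, which is irreducible exactly when $n\geq3$.

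The step I expect to be the main obstacle is showing that compatibility with $r$ together with trivial codeterminant yields \emph{all} of the defining relations of $\mathcal{O}_q(SL_2(k))$, not merely some of them, so that the assignment $e_{ij}\mapsto x_{ij}$ is genuinely well-defined. Here the two antipode normal forms of Proposition~\ref{pro4.4} must be tracked with care, since they are exactly what fixes the value $q=\pm1$ and the correct commutation relations among the $x_{ij}$; a sign slip would interchange the roles of $k[u,v]$ and $k_{-1}[u,v]$. Keeping the self-dual form, the antipode normal form, and the comultiplication mutually consistent is the delicate bookkeeping on which the whole identification rests.
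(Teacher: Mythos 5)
Your overall architecture matches the paper's (dualize to a coaction of $K=H^{\circ}$, use self-duality of $U$ and Lemma~\ref{lem3.4} to get $S(C)\subseteq C$, normalize $S|_C$ via Proposition~\ref{pro4.4}, and realize $K$ as a quotient of $\mathcal{O}_{\pm 1}(SL_2(k))$), but the central step is not carried out and, as stated, is false. You claim that ``compatibility of $\rho$ with the single relation $r$ forces the $x_{ij}$ to satisfy the commutation relations of $\mathcal{O}_q(SL_2(k))$.'' It does not. Writing $\rho(r)=r\otimes \mathsf{D}$ with $r=vu-quv$ and expanding in the free algebra, the coefficients of $u^2,uv,vu,v^2$ give exactly four identities: $x_{12}x_{11}=qx_{11}x_{12}$, $x_{22}x_{21}=qx_{21}x_{22}$, and the two quantum-determinant identities $x_{22}x_{11}-qx_{21}x_{12}=\mathsf{D}$ and $x_{11}x_{22}-q^{-1}x_{12}x_{21}=\mathsf{D}$. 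The remaining relations of $\mathcal{O}_q(SL_2(k))$ --- $x_{21}x_{11}=qx_{11}x_{21}$, $x_{22}x_{12}=qx_{12}x_{22}$, $x_{12}x_{21}=x_{21}x_{12}$, and the $q$-commutator of $x_{11}$ with $x_{22}$ --- do not follow from $\rho(r)=r\otimes 1$, and without them the assignment $e_{ij}\mapsto x_{ij}$ is not known to be well defined. This is exactly the point you flag as ``the main obstacle,'' and it is where the proof is missing. The paper closes it by a different mechanism: it never extracts relations from $r$ at all, but writes the antipode axiom $\sum_m S(e_{im})e_{mj}=\delta_{ij}=\sum_m e_{im}S(e_{mj})$ as the matrix identity $(WE^TW^{-1})E=I=E(WE^TW^{-1})$ with $W=\left(\begin{smallmatrix}0&1\\ p&0\end{smallmatrix}\right)$, $p^2=1$; this single identity yields all of \eqref{E4.5.2}--\eqref{E4.5.9}, hence the full presentation of $\mathcal{O}_{-p^{-1}}(SL_2(k))$. (Your four relations can be completed by applying $S=c_W\circ\tau$ to them, but then you must also check that the parameter $q$ read off from $r$ agrees with $-p^{-1}$ read off from $W$, which you do not do.)

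A second, related gap is your assertion that the symmetric/skew dichotomy for $W$ in Proposition~\ref{pro4.4} ``corresponds, through Lemma~\ref{lem3.5}, to the symmetric/skew-symmetric invariant form on $U$, hence to $r=u^2+v^2$ or $r=uv-vu$ and to $q=-1$ or $q=1$.'' Lemmas~\ref{lem3.5} and~\ref{lem3.6} each give a two-fold dichotomy, but nothing cited proves the two dichotomies are matched; this is a Frobenius--Schur-type statement that needs its own argument, and your case division rests on it. The paper avoids the issue: the parameter $p$ is fixed first by the antipode; $p=-1$ forces $K$ commutative, giving (a1) (and (a2) via Proposition~\ref{pro2.8}(c)); and for $p=1$ the possibility $R=k[u,v]$ is excluded by a direct computation showing that $\rho([u,v])=0$ together with \eqref{E4.5.7} would force $K$ commutative. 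Your conclusions in (a1)--(a3) are correct and your treatment of (a2) via Proposition~\ref{pro2.8}(c) is fine, but both bridging steps above must be supplied before the argument is complete.
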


\begin{proof}
The first assertion follows from Lemma~\ref{lem4.1}(c) and Proposition~\ref{pro4.2}. The 2-dimensional left $H$-module $U$ is simple due to Lemma
\ref{lem4.1}. To proceed, we classify the $K:=H^{\circ}$-coactions on $R$
which induce $H$-actions satisfying our hypotheses.

Since the homological determinant of the $H$-action on $R$ is
trivial,  we have by Corollary \ref{cor3.3} that $U$ is self-dual.
Let $C$ be the smallest sub-coalgebra of $K$ so that $\rho(U)
\subseteq U \otimes C$. Note that $C\cong M_2(k)$ as coalgebras, and $U$
is a simple right $C$-comodule. In particular, $C \cong  U^{\oplus
2}$ as right $C$-comodules. Since $U$ is self-dual, $C$ is also
self-dual. Now, the antipode $S$ of $H$ satisfies $S(C) \subseteq C$
by Lemma \ref{lem3.4}. Let $\{e_{ij}\}_{i,j=1,2}$ be a $k$-vector
space basis for $C$. Then by Lemma \ref{lem1.6}(b), $K=k\langle e_{ij}\mid
i,j=1,2\rangle$. Next, we determine the possibilities for $S$ and the relations
between the $e_{ij}$. This will completely determine $K$ as a Hopf algebra.

Since $H$ is semisimple and char($k$) =0, we have that $S^2=1$, 
so we can apply a version of
Proposition \ref{pro4.4} to the coalgebra $C$.
Hence we can assume that $S=c_W\circ\tau$, as an anti-automorphism
of the coalgebra $C$, where
$W=${\footnotesize $\left(\begin{array}{cc} 0 & 1 \\
p & 0
\end{array} \right)$} for $p^2 =1$.
Let $E$ denote the
coalgebra matrix units
{\footnotesize $\left(\begin{array}{cc} e_{11} & e_{12} \\
e_{21} & e_{22} \end{array} \right)$}. Note that
\begin{equation}
\label{E4.5.1}\tag{E4.5.1}
\Delta(e_{ij}) = \sum_{m=1}^2 e_{im} \otimes e_{mj}.
\end{equation}

By the antipode axiom and \eqref{E4.5.1}, we get that
$$\sum_{m=1}^2 S(e_{im}) e_{mj} ~=~ \sum_{m=1}^2 e_{im} S(e_{mj}) ~=~
\epsilon(e_{ij}) ~=~ \delta_{ij}. $$
Therefore,
$$S(E)E ~=~ (W E^T W^{-1}) E ~=~ I ~=~ E  (W E^T W^{-1}) ~=~ ES(E),$$
which is equivalent to:
{\footnotesize
$$\left(\begin{array}{cc} e_{22} & p^{-1}e_{12} \\ pe_{21} & e_{11}
\end{array} \right)
\left(\begin{array}{cc} e_{11} & e_{12} \\ e_{21} & e_{22}
\end{array} \right) ~=~
\left(\begin{array}{cc} 1 & 0 \\ 0 & 1 \end{array} \right) ~=~
\left(\begin{array}{cc} e_{11} & e_{12} \\ e_{21} & e_{22}
\end{array} \right)
\left(\begin{array}{cc} e_{22} & p^{-1}e_{12} \\ pe_{21} & e_{11}
\end{array} \right).$$}
Hence, $\{e_{ij}\}$ satisfies the following equations:
\begin{align}
e_{22} e_{11} + p^{-1} e_{12} e_{21} &\; =\; 1, \label{E4.5.2}\tag{E4.5.2} \\
e_{22} e_{12} + p^{-1} e_{12} e_{22} &\; =\; 0, \label{E4.5.3}\tag{E4.5.3}\\
e_{11} e_{21} + p e_{21} e_{11} &\; =\; 0, \label{E4.5.4}\tag{E4.5.4}\\
e_{11} e_{22} + p e_{21} e_{12} &\; =\; 1, \label{E4.5.5}\tag{E4.5.5}\\
e_{11} e_{22} + p e_{12} e_{21} &\; =\; 1, \label{E4.5.6}\tag{E4.5.6}\\
e_{12} e_{11} + p^{-1} e_{11} e_{12} &\; =\; 0, \label{E4.5.7}\tag{E4.5.7}\\
e_{21} e_{22} + p e_{22} e_{21} &\; =\; 0, \label{E4.5.8}\tag{E4.5.8}\\
e_{22} e_{11} + p^{-1} e_{21} e_{12} &\; =\; 1. \label{E4.5.9}\tag{E4.5.9}
\end{align}
Equations \eqref{E4.5.5} and \eqref{E4.5.6}, along with Equations
\eqref{E4.5.6} and \eqref{E4.5.9} yield the following two equations:
\begin{align}
e_{12} e_{21} &\;=\; e_{21} e_{12}, \label{E4.5.10}\tag{E4.5.10}\\
e_{11} e_{22} - e_{22} e_{11} &\; =\; (p^{-1}-p)e_{12} e_{21}.
\label{E4.5.11}\tag{E4.5.11}
\end{align}
Observe that Equations \eqref{E4.5.2}$-$\eqref{E4.5.4}, \eqref{E4.5.7},
\eqref{E4.5.8}, \eqref{E4.5.10}, \eqref{E4.5.11} yield precisely 
the relations of the quantum group $\mathcal{O}_{-p^{-1}}(SL_2(k))$ 
from Example~\ref{ex1.4}. Therefore, for $p^2=1$,
$$\mathcal{O}_{-p^{-1}}(SL_2(k)) \twoheadrightarrow K=k\langle C \rangle.$$
Thus, the Hopf algebras $H$ in the theorem can be expressed as Hopf duals of
finite dimensional Hopf quotients of $\mathcal{O}_{\pm 1}(SL_2(k))$.

If $p=-1$, then $\mathcal{O}_{-p^{-1}}(SL_2)$ is commutative. Hence $K =
k\langle C \rangle$ is commutative. Thus, $H = K^{\circ} = k \tilde{\Gamma}$, a
group algebra for $\tilde{\Gamma}$ a nonabelian finite subgroup of 
$SL_2(k)$. Again, by Proposition \ref{pro4.2}, $R$ is isomorphic to
either $k[u,v]$ or $k_{-1}[u,v]$. The classical result for $k[u,v]$
\cite[Theorem 3.6.17.I]{Suzuki},
and the last case of Proposition~\ref{pro2.8}(c) for
$k_{-1}[u,v]$ complete these cases. In other words, we have
statements (a1) and (a2), respectively.

Now consider the $p = 1$ case;  we show that $R=k_{-1}[u,v]$. By way
of contradiction suppose that $R=k[u,v]$. For the coaction
$\rho(U) \subseteq U \otimes C$, choose $u$ and $v$ so that:
$$\rho(u) = u \otimes e_{11} + v \otimes e_{21}\hspace{.2in} 
\text{and} \hspace{.2in}
\rho(v) = u \otimes e_{12} + v \otimes e_{22}.$$
Now, writing $[a,b]:=ab-ba$, we have that
$$0 = \rho([u,v]) = u^2 \otimes [e_{11},e_{12}]+uv\otimes
([e_{11},e_{22}]+[e_{21},e_{12}])+v^2\otimes [e_{21},e_{22}].$$
Hence, by considering the term $u^2$, we have
$0=e_{11} e_{12}-e_{12} e_{11}$. Equation
\eqref{E4.5.7} then implies that $e_{11} e_{12} = e_{12} e_{11}=0$.
Similarly, by considering the terms $uv$ and $v^2$, one sees that
$e_{21}e_{22}=e_{22}e_{21}=0$ and $e_{11}e_{22}-e_{22}e_{11}=0$.
Applying the antipode $S$, we obtain that
$e_{11}e_{21}=e_{21}e_{11}=e_{12}e_{22} =e_{22}e_{12}=0.$
Together with the relation $e_{12}e_{21}=e_{21}e_{12}$
in ${\mathcal O}_{-p^{-1}}(SL_2)$,  we have that $K$ is commutative, 
which yields a contradiction. Therefore, $R\cong k_{-1}[u,v]$. This gives case (a3).
\end{proof}

Finite dimensional noncommutative Hopf quotients of
$\mathcal{O}_{-1}(SL_2)$ are described explicitly in \cite[Theorem~5.19]{BichonNatale}; we restate their classification in our context below.

\begin{corollary}
\label{cor4.6} The Hopf algebras $H$ appearing in case (a3) of
Theorem \ref{thm4.5} are duals of the finite dimensional
Hopf quotients of $\cal{O}_{-1}(SL_2)$. That is, $H^{\circ}$ is
isomorphic to exactly one of the Hopf algebras
$\mathcal{B}(\tilde{I})$, $\mathcal{A}(\tilde{\Gamma})$ or
$\mathcal{B}(\tilde{\Gamma})$, all of which we denote by $\mathcal{D}(\tilde{\Gamma})$ (as in the Introduction). Here, $\tilde{I}$ is the binary
icosahedral group of order 120, and $\tilde{\Gamma}$ is either the
binary tetrahedral group of order 24, the binary octahedral group of
order 48, or the binary dihedral group of order $4n$ for $n \geq 2$.
\qed
\end{corollary}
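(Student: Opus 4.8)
The plan is to obtain the corollary as a direct translation of Theorem~\ref{thm4.5}(a3) through the classification of Bichon and Natale. First I would recall precisely what case (a3) supplies: the Hopf dual $K := H^{\circ}$ is a finite dimensional Hopf quotient of $\mathcal{O}_{-1}(SL_2(k))$, realized concretely in the proof of Theorem~\ref{thm4.5} as $K = k\langle C\rangle$ for the matrix coalgebra $C$ with $\rho(U)\subseteq U\otimes C$. Since case (a3) is by definition the noncocommutative case, $H$ is noncocommutative, and dually $K = H^{\circ}$ is noncommutative; as $H$ is also noncommutative, $K$ is in addition noncocommutative. Thus $K$ is a genuinely nontrivial finite dimensional noncommutative Hopf quotient of $\mathcal{O}_{-1}(SL_2(k))$.

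Next I would invoke \cite[Theorem~5.19]{BichonNatale}, which gives a complete list, up to isomorphism, of the finite dimensional noncommutative Hopf algebra quotients of $\mathcal{O}_{-1}(SL_2(k))$: each such quotient is isomorphic to exactly one of $\mathcal{B}[\tilde{I}]$, $\mathcal{A}(\tilde{\Gamma})$, or $\mathcal{B}(\tilde{\Gamma})$, with $\tilde{I}$ the binary icosahedral group and $\tilde{\Gamma}$ ranging over the binary tetrahedral, octahedral, and dihedral groups. Matching $K = H^{\circ}$ against this list, and using that the classification is up to isomorphism to secure the ``exactly one'' clause, yields the statement.

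The point that requires care, rather than being a serious obstacle, is confirming that this list is neither too large nor too small for our hypotheses. That the commutative quotients are excluded is automatic: a commutative quotient of $\mathcal{O}_{-1}(SL_2(k))$ would dualize to a cocommutative $H$, which is the group-algebra situation of Theorem~\ref{thm4.5}(a2), not (a3). That no quotient is lost to the inner-faithfulness requirement follows from Lemma~\ref{lem1.5}(b): since $K$ is generated as an algebra by (the image of) $C$, and $C$ is the smallest subcoalgebra with $\rho(U)\subseteq U\otimes C$, the induced coaction on $U$ is inner faithful for each of the three families, so all of them genuinely arise. The substantive mathematical content --- the actual enumeration of Hopf quotients of $\mathcal{O}_{-1}(SL_2(k))$ --- lies entirely in the cited work of Bichon and Natale and is taken as given here.
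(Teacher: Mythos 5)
Your proposal is correct and follows essentially the same route as the paper: the corollary is stated with \qed precisely because it is a direct citation of \cite[Theorem~5.19]{BichonNatale} applied to the quotient $K=H^{\circ}$ of $\mathcal{O}_{-1}(SL_2(k))$ produced in Theorem~\ref{thm4.5}(a3). Your extra remarks (excluding commutative quotients via duality with case (a2), and checking inner faithfulness via Lemma~\ref{lem1.5}(b)) are sound sanity checks that the paper leaves implicit.
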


When $\tilde{\Gamma}=BD_{4n}$, further results are given in
\cite{Masuoka:cocycle}. By the remarks after \cite[Definition
3.3]{Masuoka:cocycle}, we have that $\mathcal{A}(BD_{4m})$ (which is
${\mathcal A}_{4m}$ in \cite{Masuoka:cocycle}) for
$m\geq 3$, and  $\mathcal{B}(BD_{4m})$ (which is
${\mathcal B}_{4m}$ in \cite{Masuoka:cocycle}) for $n\geq 2$, are
nontrivial. On the other hand, $\mathcal{A}(BD_{8})$ is isomorphic
to $(k BD_{8})^\circ$, hence is commutative. Therefore, not all Hopf
deformations of binary polyhedral groups are nontrivial.

\begin{proposition}
\label{pro4.7}
Let $(H,R)$ satisfy Hypothesis \ref{hyp0.3}. If $H$ is 
noncommutative, then $\dim H$ is even.
\end{proposition}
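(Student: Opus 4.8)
The plan is to reduce the statement to the cited result \cite[Corollary 6]{KSZ}, which asserts that a semisimple Hopf algebra carrying a nontrivial self-dual simple module must have even $k$-dimension. Since $H$ is semisimple by Hypothesis \ref{hyp0.3}, it will suffice to exhibit such a module, and the obvious candidate is the defining module $U = ku \oplus kv$ itself.

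First I would apply Lemma \ref{lem4.1}(c): as $H$ is both semisimple and noncommutative, the $2$-dimensional module $U$ must be simple. Because $\dim_k U = 2$, this simple module is automatically \emph{nontrivial}, since it cannot be the $1$-dimensional trivial module.

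Next I would invoke Corollary \ref{cor3.3}: as $U$ is simple and the $H$-action on $R$ has trivial homological determinant (both part of Hypothesis \ref{hyp0.3}), the module $U$ is self-dual. Combining these two observations, $U$ is a nontrivial self-dual simple $H$-module, and a direct application of \cite[Corollary 6]{KSZ} then yields that $\dim_k H$ is even.

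The argument will be short precisely because all of the substantive work has already been carried out in the preceding lemmas: Lemma \ref{lem4.1}(c) supplies the simplicity of $U$, Corollary \ref{cor3.3} supplies its self-duality, and \cite[Corollary 6]{KSZ} supplies the parity conclusion. Consequently there is no genuine obstacle to overcome; the only point meriting explicit mention is the nontriviality of $U$, which is immediate since a simple module of dimension $2$ cannot be the trivial module.
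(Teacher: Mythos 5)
Your proposal is correct and follows essentially the same route as the paper's proof: Lemma~\ref{lem4.1}(c) gives simplicity of $U$, Corollary~\ref{cor3.3}(c) gives self-duality, and the result of Kashina--Sommerh\"auser--Zhu on nontrivial self-dual simple modules over semisimple Hopf algebras gives the parity of $\dim H$ (the paper cites it as \cite[Theorem 4]{KSZ} in the proof, having quoted the same fact as \cite[Corollary 6]{KSZ} after Lemma~\ref{lem3.2}). Your explicit remark that a $2$-dimensional simple module is automatically nontrivial is a point the paper leaves implicit, but the argument is otherwise identical.
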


\begin{proof} Since $H$ is semisimple and noncommutative, the
$H$-module $U$ is simple by Lemma \ref{lem4.1}(c). Since the $H$-action
has trivial homological determinant, $U$ is self-dual by Corollary
\ref{cor3.3}(c). By \cite[Theorem 4]{KSZ}, $\dim H$ is even.
\end{proof}

This leads to the following conjecture.

\begin{conjecture}
\label{con4.8}
Let $R$ be an AS regular algebra generated in degree one of
Gelfand-Kirillov dimension $d$. Suppose that $H$ is a semisimple Hopf
algebra acting inner faithfully on $R$ such that $R_1$ is a simple
$H$-module. Then, $\dim H$ is divisible by $d'$ for some $2\leq d'\leq d$.
\end{conjecture}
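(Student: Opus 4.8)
The plan is to recast the statement inside the fusion category $\mc{C}:=H\text{-}\Mod_{f.d.}$. Since $H$ is semisimple (char $k=0$, $k$ algebraically closed), $\mc{C}$ is an integral fusion category with $\mathrm{FPdim}(\mc{C})=\dim H$, and each simple object $V$ satisfies $\mathrm{FPdim}(V)=\dim_k V$. The hypothesis that $U:=R_1$ is simple singles out a distinguished simple object of dimension $n:=\dim_k U$; a $1$-dimensional $U$ would force $R\cong k[x]$ and $d=1$, making the range $[2,d]$ empty, so we may assume $2\le n\le d$ (the upper bound holding for the AS regular algebras at hand, where $\dim_k R_1\le\GKdim R$). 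The first step is to record that inner faithfulness is equivalent to $U$ being a \emph{tensor generator} of $\mc{C}$: a nonzero Hopf ideal annihilating $U$ corresponds to a proper tensor subcategory containing $U$, so Definition~\ref{def1.4} says the tensor subcategory generated by $U$ is all of $\mc{C}$, i.e.\ every simple $H$-module is a direct summand of some $U^{\otimes a}\otimes(U^\ast)^{\otimes b}$.

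The second step is to feed in the AS regular structure. By Theorem~\ref{thm2.1} (for $d=2$) and its Koszul analog Lemma~\ref{lem2.2}, the homological determinant is realized as a $1$-dimensional module $\chi$, namely $\Ext^d_R(k,k)\cong\chi$, occurring as a subquotient of $U^{\otimes d}$. I would then split into two regimes. If $\hdet_H R$ is nontrivial, then $\chi$ is a nontrivial grouplike of $H^\ast$, so the cyclic Hopf subalgebra it generates has dimension $\mathrm{ord}(\chi)$, which divides $\dim H$ by Nichols--Zoeller; the task is to show $\mathrm{ord}(\chi)$, or a divisor of it, lies in $[2,d]$. If $\hdet_H R$ is trivial, then $\mathbf{1}$ is a subquotient of $U^{\otimes d}$, so the relevant higher Frobenius--Schur indicator of $U$ is active, and the goal is to upgrade the $d=2$ mechanism of Proposition~\ref{pro4.7} --- where self-duality of $U$ together with \cite[Theorem 4]{KSZ} forces $2\mid\dim H$ --- to arbitrary $d$.

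The third step, and the main obstacle, is that the self-duality used for $d=2$ is genuinely special to that dimension: the Frobenius structure of the Koszul dual $R^!$ pairs $R^!_1=U^\ast$ with $R^!_{d-1}$, and only when $d=2$ does this collapse to a self-pairing on $U$ yielding a twisted self-duality $U^\ast\cong U\otimes\chi^{\pm1}$. For $d>2$ one must instead feed the full Frobenius algebra $R^!$, equivalently the higher indicators $\nu_m(U)$ for $m\le d$, into the KSZ machinery to produce an honest integer divisor. The deeper difficulty is that the naive candidate $d'=n$ asserts $\dim_k U\mid\dim H$, which is precisely Kaplansky's (false) divisibility conjecture for semisimple Hopf algebras; thus the AS regular / Calabi--Yau data carried by $U$ must be exploited exactly to recover enough divisibility to land some $d'$ in $[2,d]$ even when $n\nmid\dim H$. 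Converting this homological input into an integer divisibility statement is the crux, which is why the assertion is only conjectural. As consistency checks, the group case $H=k\tilde{\Gamma}$ recovers Frobenius's theorem that $\dim_k R_1$ divides $|\tilde{\Gamma}|$, so there $d'=\dim_k R_1$ suffices, and $d=2$ recovers Proposition~\ref{pro4.7}.
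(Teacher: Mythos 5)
The statement you were asked to prove is labelled a \emph{conjecture} in the paper: the authors offer no proof of it, and the only instance they establish is $d=2$ under the additional hypothesis of trivial homological determinant (Proposition~\ref{pro4.7}, via self-duality of $U$ from Corollary~\ref{cor3.3} and \cite[Theorem 4]{KSZ}). Your proposal is candid about this --- you identify the same $d=2$ mechanism, explain correctly why it does not generalize (for $d>2$ the Frobenius pairing on the Koszul dual relates $U^{\ast}$ to $R^!_{d-1}$ rather than to $U$, so no twisted self-duality of $U$ falls out), and you explicitly concede that converting the homological input into an integer divisibility statement is the unresolved crux. So there is no complete argument here, but none exists in the paper either; as a map of where the difficulty lies your write-up is accurate, and it even goes beyond the paper's discussion in noting that the conjecture does not assume trivial homological determinant, so a nontrivial-$\hdet$ branch (order of the grouplike $\chi\in G(H^{\circ})$, Nichols--Zoeller) must also be addressed.

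Two factual cautions. First, Kaplansky's divisibility conjecture for semisimple Hopf algebras (that $\dim_k V$ divides $\dim_k H$ for every simple $V$) is \emph{open}, not false; no counterexample is known, and it is proved in many cases (e.g.\ for quasitriangular $H$). Your parenthetical ``(false)'' should read ``(open)''; the logical point survives --- one cannot simply take $d'=\dim_k U$ without settling a hard divisibility question --- but the reason is that the statement is unproven, not refuted. Second, the inequality $\dim_k R_1\le \GKdim R$ for AS regular algebras generated in degree one, which you need in order to place $n=\dim_k U$ inside the window $[2,d]$, is itself only known where such algebras have been classified (low global dimension); you flag this with ``for the AS regular algebras at hand,'' which is honest, but it is a further genuine gap rather than a routine verification.
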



\section{Proof of Theorem \ref{thm0.4}:
$H$ is commutative (and semisimple)}
\label{sec5}

The goal of this section is to prove Theorem \ref{thm0.4} in the case 
where a semisimple Hopf algebra $H$ acts on an algebra $R$ satisfying 
Hypothesis \ref{hyp0.2}. We assume that $R = k\langle U \rangle/(r)$ 
with $U$ a non-simple left $H$-module, so by Lemma~\ref{lem4.1}(c), 
$H$ is commutative. See Theorem~\ref{thm5.2} below for the main 
classification result in this setting. First, we need the following lemma.

\begin{lemma}
\label{lem5.1} Let $H$ be a finite dimensional Hopf algebra and
$K:=H^{\circ}$.
\begin{enumerate}
\item
If $T$ is a 1-dimensional right $K$-comodule, then $T \cong kg$ for
some grouplike element $g \in G(K)$.
\item
If $K$ coacts on $R=k\langle U\rangle/(r)$ with trivial homological
determinant, then $kr \cong k 1_K$ as $K$-comodules.
\end{enumerate}
\end{lemma}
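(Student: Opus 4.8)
The plan is to handle the two parts in order, using part (a) as the structural input for part (b). For part (a), I would fix a basis vector $t$ of the one-dimensional comodule $T$ and use the identification $T\otimes K\cong K$ to write the coaction as $\rho(t)=t\otimes g$ for a unique element $g\in K$. The two comodule axioms then determine $g$ completely: the counit axiom $(\id\otimes\epsilon)\rho=\id$ forces $\epsilon(g)=1$, and coassociativity $(\rho\otimes\id)\rho=(\id\otimes\Delta)\rho$ gives $t\otimes g\otimes g=t\otimes\Delta(g)$, hence $\Delta(g)=g\otimes g$. Thus $g$ is grouplike and $T\cong kg$ as right $K$-comodules. This is a short, routine verification.

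For part (b), I would first note that although $r$ maps to $0$ in $R$, the relation space $kr\subseteq U^{\otimes 2}$ carries an honest one-dimensional right $K$-comodule structure: since $R$ is a $K$-comodule algebra, the degree-two component of the relation ideal is $K$-costable, and in global dimension two this component is exactly $kr$ (this is the same submodule used in the proof of Theorem~\ref{thm2.1}). Applying part (a) to $T=kr$ yields a grouplike element $g\in G(K)$ with $\rho(r)=r\otimes g$, so the task reduces to showing $g=1_K$. To pin down $g$, I would pass to the dual left $H$-action of $H=K^{\circ}$: under the correspondence between right $K$-comodules and left $H$-modules, the comodule $kr$ becomes the $H$-module on which $h$ acts by the scalar $\langle h,g\rangle=g(h)$, so $kr$ is the trivial $H$-module precisely when $g(h)=\epsilon(h)$ for all $h$, that is, precisely when $g=\epsilon_H=1_K$. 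The hypothesis of trivial homological determinant, interpreted for the $H$-action of $H=K^{\circ}$ on $R$, together with Theorem~\ref{thm2.1}, says exactly that $kr$ is the trivial $H$-module; combining these gives $g=1_K$ and hence $kr\cong k1_K$ as $K$-comodules.

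I do not expect a genuine obstacle here: the lemma is essentially a dualization of the consequence of Theorem~\ref{thm2.1}. The only point requiring care is the bookkeeping in part (b), namely matching the conventions of the module/comodule dictionary so that the grouplike $g$ recording the coaction is the same functional recording the $H$-action on $kr$. One checks there is no spurious antipode, since the correspondence $\text{left }H\text{-module}\leftrightarrow\text{right }H^{\circ}\text{-comodule}$ is the direct one, and one confirms that the stated hypothesis is precisely the input required by Theorem~\ref{thm2.1}.
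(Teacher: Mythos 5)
Your proposal is correct and follows essentially the same route as the paper: part (a) is the same coassociativity computation (the paper omits the counit check you include), and part (b) is exactly the paper's one-line reduction to Theorem~\ref{thm2.1}, which you have merely spelled out by identifying $kr$ as a one-dimensional subcomodule of $U^{\otimes 2}$, applying (a), and translating "trivial $H$-module" into "$g=1_K$" via the antipode-free module/comodule dictionary.
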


\begin{proof} (a) Take a nonzero basis element $t$ of $T$. Now
$\rho(t) = t \otimes g$, and by coassociativity,
$$t \otimes \Delta(g) = (1 \otimes \Delta) \circ \rho(t) =
(\rho \otimes 1) \circ \rho(t) = t \otimes g \otimes g.$$
Hence, $\Delta(g) = g \otimes g$.

(b) This follows from Theorem \ref{thm2.1}.
\end{proof}

\begin{theorem}
\label{thm5.2}
Assume Hypothesis~\ref{hyp0.3}, i.e.
$H$ be a semisimple Hopf algebra acting on an AS regular algebra 
$R = k \langle U \rangle/(r)$ of global dimension 2, with trivial 
homological determinant. Assume that the left $H$-module $U$ is 
non-simple so $H$ is commutative by Lemma~\ref{lem4.1}(c). The 
pairs $(H,R)$ that occur are given as follows:
\begin{enumerate}
\item[(b1)]
$\left( kC_2, ~R \right)$ where $R = k_J[u,v]$ or $k_q[u,v]$, and the
action of the generator $\sigma\in C_2$ on $R$ is defined by
$\sigma(f)=(-1)^{\deg f} f$ for all homogeneous elements
$f\in R$;
\item[(b2)]
$\left( kC_2,~ k\langle u,v \rangle/(u^2+v^2) \right)$, and the
action of the generator $\sigma\in C_2$ is defined by $\sigma(u)=u,
\sigma(v)=-v$;
\item[(b3)]
$\left( kC_n, ~k_q[u,v] \right)$ for $n \geq 3$, and the action of a
generator $\sigma\in C_n$ is defined by $\sigma(u)=\zeta u$ and
$\sigma(v)= \zeta^{-1} v$, for some primitive $n$-th root of unity $\zeta$;
\item[(b4)]
$\left( (kD_{2n})^{\circ},~ k\langle u,v \rangle/(u^2+v^2)  \right)$
for $n \geq 3$,
and if $\{p_x\}_{x \in D_{2n}}$ is the dual basis of $D_{2n}$, then
$p_x \cdot u = \delta_{x,g} u$ and $p_x \cdot v = \delta_{x,h} v$.
Here, $D_{2n} = \langle g,h ~|~ g^2=h^2=1, ~(gh)^n=1 \rangle$.
\end{enumerate}
Note that algebra $k\langle u,v \rangle/(u^2+v^2)$ is
isomorphic to $k_{-1}[u,v]$.
\end{theorem}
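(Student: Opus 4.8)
The plan is to pass to the dual picture and classify $G$-gradings on $R$. Since $H$ is commutative and semisimple over an algebraically closed field of characteristic zero, it is isomorphic to $(kG)^{\circ}$ for a finite group $G$; dually $K:=H^{\circ}\cong kG$ coacts on $R$, which is the same datum as a $G$-grading $R=\bigoplus_{x\in G}R_x$. Because $U$ is non-simple and $H$ is commutative semisimple, $U$ splits as a direct sum of two one-dimensional $H$-submodules; dualizing and applying Lemma~\ref{lem5.1}(a), each summand is a one-dimensional $K$-comodule $kg_i$ for a grouplike $g_i\in G$. Hence I may choose a basis $\{u,v\}$ of $U$ that is homogeneous, say $\rho(u)=u\otimes g_1$ and $\rho(v)=v\otimes g_2$, so that $\deg u=g_1$ and $\deg v=g_2$. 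By Lemma~\ref{lem1.5}, inner faithfulness of the $H$-action is equivalent to $G=\langle g_1,g_2\rangle$.

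Next I would constrain the defining relation. By Lemma~\ref{lem5.1}(b) (equivalently by Theorem~\ref{thm2.1}), triviality of the homological determinant is equivalent to $kr\cong k1_K$, i.e. to $r$ being homogeneous of degree $1_G$. Writing $r=\sum_{i,j}m_{ij}x_ix_j$ with $x_1=u,\ x_2=v$, the fact that $R$ is AS regular of global dimension two is equivalent to the $2\times 2$ matrix $M=(m_{ij})$ being invertible. Homogeneity of $r$ forces $m_{ij}=0$ whenever $g_ig_j\neq 1_G$, so the problem reduces to a finite case analysis governed by which of $g_1^2,\ g_2^2,\ g_1g_2,\ g_2g_1$ equal $1_G$, always subject to the requirement that enough entries survive for $M$ to remain invertible. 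This mirrors, on the comodule side, the computation carried out for group actions in Proposition~\ref{pro2.8}.

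Running the analysis, I expect exactly four configurations. If $g_2=g_1^{-1}$ and $g_1$ has order $n\geq 3$, only the off-diagonal entries of $M$ can be nonzero, so (after rescaling) $r=vu-quv$, $G=\langle g_1\rangle\cong C_n$, and we land in (b3) with $R=k_q[u,v]$. If $g_1=g_2$ is a nontrivial involution, then every monomial has degree $1_G$, $M$ is unconstrained, so $R$ may be $k_J[u,v]$ or any $k_q[u,v]$ and $G\cong C_2$; this is (b1). If exactly one of $g_1,g_2$ is trivial and the other a nontrivial involution, then $M$ must be diagonal, $r=u^2+v^2$, $G\cong C_2$, giving (b2). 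Finally, if $g_1,g_2$ are two distinct nontrivial involutions, then again $M$ is diagonal and $r=u^2+v^2$, so $R\cong k_{-1}[u,v]$, while $G=\langle g_1,g_2\rangle$ is dihedral $D_{2n}$ with $n$ the order of $g_1g_2$; the genuinely non-abelian range $n\geq 3$ produces (b4), the boundary value $n=2$ giving only the abelian group $C_2\times C_2$.

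The remaining step is to translate each grading into the explicit action recorded in the statement. In the cyclic cases I would use the self-duality $(kC_n)^{\circ}\cong kC_n$: a generating character $\chi$ with $\chi(g_1)=\zeta$ a primitive $n$-th root of unity acts on a homogeneous element through its degree, which recovers $\sigma(u)=\zeta u,\ \sigma(v)=\zeta^{-1}v$ in (b3) and the sign actions of (b1) and (b2). In the dihedral case I would keep $H=(kD_{2n})^{\circ}$ and read off from the dual basis $\{p_x\}$ that $p_x\cdot u=\delta_{x,g}u$ and $p_x\cdot v=\delta_{x,h}v$, matching (b4). I expect the main difficulty to lie in the combined second and third steps: checking that invertibility of $M$ together with degree-$1_G$ homogeneity leaves \emph{precisely} these normal forms after the permitted rescalings of $u$ and $v$, and then dualizing each coaction into the asserted automorphisms while correctly handling the small-$n$ boundary between the cyclic and dihedral families.
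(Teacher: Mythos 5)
Your proposal is correct and follows essentially the same route as the paper's proof: dualize to a $kG$-coaction (a $G$-grading), use Lemma~\ref{lem5.1} to put $u,v$ in grouplike degrees $g_1,g_2$ with $G=\langle g_1,g_2\rangle$ by Lemma~\ref{lem1.5}, force $\deg r=1_G$ from trivial homological determinant, and split into the same four cases (your conditions on $g_1^2,g_2^2,g_1g_2$ are exactly the paper's trichotomy on the summands $T_1,T_2$ being isomorphic, trivial, or mutually dual). Your "invertible relation matrix $M$" packaging is just the paper's "$r$ does not factor since $R$ is a domain" argument, and your treatment of the $n=2$ dihedral boundary is at least as careful as the paper's own.
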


\begin{proof}
Recall that $H\neq k$.
The left $H$-module $U$ is isomorphic to $T_1 \oplus T_2$ where
$T_i$ is a 1-dimensional left $H$-module by Lemma \ref{lem4.1}(a).
To prove (b1) - (b4), recall that since $H$ is commutative and $k$
is algebraically closed of characteristic zero, we have that $H$ is
isomorphic to the dual of a group algebra $(kG)^{\circ}$
\cite[Theorem~2.3.1]{Montgomery}.  In this case, we consider $R$ 
as a $G$-graded algebra. Let $K$ denote $kG$, the Hopf dual of $H$ 
and let $\rho$ denote the $K$-coaction on $R$.

\smallskip

 (b1) Assume that $U \cong T \oplus T$ for some 1-dimensional
right $K$-comodule $T\not\cong k$. Pick a basis $u$ for the first
copy of $T$, and $v$ for the second. By Lemma \ref{lem5.1}(a),
$\rho(u) = u \otimes g$ and $\rho(v) = v \otimes g$, for some $g \in
G(K)$. Since the relation of the AS regular algebra $R$ of global
dimension two is of the form $r = a u^2 + b uv + cvu +d v^2$, we
have that $\rho (r) = r \otimes g^2$.

Since hdet$_H R$ is trivial, we have that $g^2 = 1$ by Lemma
\ref{lem5.1}(b). Also, $\deg(u) = \deg(v) = g$, so $K$ is
generated by $g$ and whence $K=k\left<g ~|~ g^2\right>=k C_2$
by Lemma \ref{lem1.6}. Thus, $H = (k C_2)^{\circ} \cong k C_2$.

Since $R$ is any AS regular algebra of dimension 2, and $H \cong k
C_2 = k\left<\sigma\right>$ for some generator $\sigma$ of $C_2$,
the only possible action of $H$ on $R$ with $U\cong T\oplus T$
is given by $\sigma(u) = -u $ and $\sigma(v) = -v$. This also
follows from Proposition \ref{pro2.8}.

\smallskip

(b2) Assume that $U \cong k \oplus T$ where $T\not\cong k$.
Pick a basis $u$
for $k$, and $v$ for $T$. Then $\rho (u) = u \otimes 1$ and $\rho(v)
= v \otimes g$, for some $g \in G(K)$.  Note that
$$\rho(u^2) = u^2 \otimes 1, \hspace{.15in}
\rho(uv) = uv \otimes g, \hspace{.15in} \rho(vu) = vu \otimes g,
\hspace{.15in} \rho(v^2) = v^2 \otimes g^2.$$ Since $\hdet_H R$ is
trivial and $g \neq 1$, and since $R$ is a domain, the relation $r$
of $R$ must be of the form $u^2 + \alpha v^2$ for some
$\alpha\neq 0$. By Lemma \ref{lem5.1}(b), $g^2 =1$. Rescaling this
relation yields the desired algebra, $R =
k\left<u,v\right>/(u^2+v^2)$. Again by Lemma \ref{lem1.6}, $K =
k\left<g ~|~ g^2 =1\right> = k C_2$. Thus, $H = (k C_2)^{\circ}
\cong k C_2$. Note that $R \cong k_{-1}[u,v]$ and the non-diagonal
action of $H$ on $R$ is given in the second case $G=C_2$ in 
Proposition~\ref{pro2.8}(c). Equivalently,  the action of $H =
k\left<\sigma\right>$ on $k \langle u,v \rangle/(u^2+v^2)$ is given
by $\sigma(u) = u$ and $\sigma(v) = -v$.

\smallskip

(b3) Assume that $U \cong T_1 \oplus T_2$ for some non-isomorphic
1-dimensional left $H$-modules $T_i$, where $T_1 \otimes T_2 \cong k$.
Pick a basis $u$ for $T_1$, and $v$ for $T_2$.   By Lemma \ref{lem5.1}(a),
$\rho(u) = u \otimes g$ and $\rho(v) = v \otimes h$, for some
$g,h \in G(K)$. Here, $g \neq h$, else we are in case (b1). Note that
$$\rho(u^2) = u^2 \otimes g^2, \hspace{.15in}
\rho(uv) = uv \otimes gh, \hspace{.15in}
\rho(vu) = vu \otimes hg, \hspace{.15in}
\rho(v^2) = v^2 \otimes h^2.$$
Since $T_1 \otimes T_2 \cong k$, and $T_1 \otimes T_2 \cong kg
\otimes kh$ as $K$-comodules, we have that $gh = 1 =hg$.
This implies that $g^2 \neq 1$ and $h^2 \neq 1$. Hence, the relation
$r$ of $R$ lies in $(T_1 \otimes T_2) \oplus (T_2 \otimes T_1)$.
Rescaling $r$ implies that $R =k_q[u,v]$ for some $q \in k^{\times}$.

Note that $h = g^{-1}$, so by Lemma \ref{lem1.6}, $K$ is generated
by the grouplike element $g$ and $G(K) = \left<g ~|~ g^n\right> =
C_n$ for some $n \geq 3$. Thus, $H = (k C_n)^{\circ} \cong k C_n$.
By Proposition \ref{pro2.8}, we have that the action of $H =
k\left<\sigma\right>$ on $R$ is given by $\sigma(u) = \zeta u$ and
$\sigma(v) = \zeta^{-1} v,$ where $\zeta$ is a primitive $n$-th root
of unity.

\smallskip

(b4) Assume that $U \cong T_1 \oplus T_2$, where
$T_1 \otimes T_2 \not \cong k$. Pick a basis $u$ for $T_1$, and $v$
for $T_2$.   Again by Lemma~\ref{lem5.1}(a), $\rho(u) = u \otimes g$
and $\rho(v) = v \otimes h$, for some $g,h \in G(K)$. Here,
$g \neq h$, else we are in case (b1). Note that
$$\rho(u^2) = u^2 \otimes g^2, \hspace{.15in}
\rho(uv) = uv \otimes gh, \hspace{.15in}
\rho(vu) = vu \otimes hg, \hspace{.15in}
\rho(v^2) = v^2 \otimes h^2.$$
Since $T_1 \otimes T_2 \not \cong k$, we have that $gh \neq 1$ and
$hg \neq 1$. Since $\hdet_H R$ is trivial and  $R$ is a
domain, we get that $g^2 = h^2 =1$ by Lemma~\ref{lem5.1}(b). After
rescaling, $R=k\left<u,v\right>/(u^2+v^2)$ as desired. Note that by
Lemma~\ref{lem1.6}, $K$ is generated by $g$ and $h$. Since these are 
both grouplike elements, we have $K=k G ( K )$. Let $n$ denote an 
integer such that $(gh)^n=1$, so we have the following relations 
$g^{2}=h^{2} = ( g h )^{n} = 1$. Thus $G(K)$ is a quotient of the
dihedral group $D_{2n}$. By classical group theory, any quotient group of 
$D_{2n}$ is of the form $D_{2m}$ for some $m\mid n$. Without loss of 
generality, $G(K)=D_{2n}$, or equivalently, $K$ is the group algebra 
$k D_{2n}$. Therefore $H = (k D_{2n})^{\circ}$.

Furthermore, the right $K$-coaction on $U$ yields a left $H$-action on
$U$ as follows. Let $\{p_x\}_{x \in D_{2n}}$ be the dual basis of the
group algebra $K = k D_{2n}$, which serves as a basis of $H$. The
right coaction of $u$ is defined as $\rho(u) = u \otimes g
\in U \otimes K$, which implies that the left action of $p_x$ on
$u \in U$ is given by:
$p_x \cdot u = \left< p_x, g\right>u = \delta_{x,g} \cdot u.$
Likewise, the left coaction $\rho(v) = v \otimes h \in U
\otimes K$ implies that:
$p_x \cdot v = \left< p_x, h\right>v = \delta_{x,h} \cdot v. $
\end{proof}


\section{Proof of Theorem \ref{thm0.4}: $H$ is non-semisimple}
\label{sec6}

The main result of this section is to prove Theorem \ref{thm0.4} 
in the case that $H$ is non-semisimple. We set the 
following notation for the rest of the section.

\medskip

\begin{notation}
\label{not6.1}
We set $q$ to be a root of unity with $q^2 \neq 1$. Let $l$ be order of
$q$, let $m$ be the order of $q^2$. Note that $l=m$ if $l$ is odd, and
$l=2m$ if $l$ is even.
\end{notation}

\begin{theorem}
\label{thm6.2} 
Assume Hypothesis \ref{hyp0.2} so that
$H$ is a non-semisimple Hopf algebra that acts on an Artin-Schelter 
regular algebra $R=k\langle U \rangle/(r)$ of global dimension 2 with 
trivial homological determinant. Then the pair $(H,R)$ arises in one 
of the following cases.
\begin{enumerate}
\item[(c1)]
$(T_{q, \alpha,n}^{\circ}, k_q[u,v])$ where $T_{q,\alpha,n}$ is a
generalized Taft algebra  for $q$ a root of unity with $q^2\neq 1$
(Definition~\ref{def6.4}). In this case, the left $H$-module $U$ is
not semisimple as an $H$-module.
\end{enumerate}
If in addition,  $U$ is a simple left $H$-module, then $R=k_q[u,v]$ for $q$ a
root of unity with $q^2 \neq 1$, and one of the following pairs
occur.
\begin{enumerate}
\item[(c2)]
Assume that the order of $q$ is odd.  We have $(H,
k_{q}[u,v])$, where $H^{\circ}$ is an
$(k\tilde{\Gamma})^{\circ}$-extension of the dual of the
Frobenius-Lusztig kernel $\mathfrak{u}_q(\mathfrak{sl}_2)$, with
$\tilde{\Gamma}$ a finite subgroup of $SL_2(k)$. Uniqueness of this
extension is discussed in Proposition \ref{pro6.12}. Moreover,
$H^{\circ}$ is coquasitriangular, quasicommutative, not pointed, and
$\dim_k H = \dim_k H^{\circ} = |\tilde{\Gamma}| \cdot l^3$.
\item[(c3)]
Assume that the order of $q$ is even.  We have $(H, k_{q}[u,v])$, where $H^{\circ}$ is
an $(k\Gamma)^{\circ}$-extension of either
\begin{itemize}
\item the dual of the double
Frobenius-Lusztig kernel $\mathfrak{u}_{2,q}(\mathfrak{sl}_2)$ 
(Definition-Theorem~\ref{def6.16}) \hfill if $q^4 \neq 1$, or
\item $u_{2,q}(\mathfrak{sl}_2)^{\circ}$ or the 8-dimensional quotient $u_{2,q}(\mathfrak{sl}_2)^{\circ}/(e_{12} - e_{21}e_{11}^2)$ ~~if $q^4 =1$.
\end{itemize}
Here,  $\Gamma$ is a finite subgroup of $PSL_2(k)$.
Moreover, $H^{\circ}$ is not pointed. If $q^4 \neq 1$, then $\dim_k H = 2 |\Gamma| (l/2)^3 = 2 |\Gamma| m^3.$
\end{enumerate}
The invariant subring $R^H \cong R^{\mathrm{co}\; H^{\circ}}$ in each case is AS Gorenstein.
\end{theorem}

\begin{proof} The proof is based on analysis of these three cases, (c1) - (c3), 
which are discussed in Sections~6.1-6.3, respectively. Namely, see Proposition~\ref{pro6.6} for (c1); see Proposition~\ref{pro6.12} for (c2); and see Proposition~\ref{pro6.25} and Remark~\ref{rem6.24} for (c3). Moreover, Proposition~\ref{pro2.7}(c) verifies the statement before (c2). The AS Gorenstein condition is established in Lemma~\ref{lem6.7} and Propositions \ref{pro6.8}, \ref{pro6.14}, \ref{pro6.28}.
\end{proof}

\subsection{$U$ non-simple}
\label{ssec6.1}
In this section, we assume that $H$ is non-semisimple, and for 
$R=k\langle U\rangle/(r)$, we have that $U$ is a non-simple left 
$H$-module. 
We classify such pairs $(H,R)$ in Proposition~\ref{pro6.6} under the 
condition that the homological determinant is trivial. The coaction 
of $H^{\circ}$ on $R$ is also provided here. Moreover, we describe 
the invariant subrings $R^H$ in Lemma~\ref{lem6.7} and 
Proposition~\ref{pro6.8}.

Let $K$ denote the
Hopf dual $H^\circ$ of $H$. Furthermore, the non-simple $K$-comodule $U$ is
not the direct sum of two 1-dimensional simple modules.
(Recall that if $U$ is the direct sum of two 1-dimensional $H$-modules,
then $H$ is commutative [Lemma \ref{lem4.1}]. Hence $H$ is semisimple,
yielding a contradiction.) In this case we also call $U$ non-semisimple.
For any 2-dimensional non-semisimple $K$-comodule $U$, there is a 
non-split short exact sequence of $K$-comodules
$$0\to T_1\to U\to T_2\to 0,$$
where $T_1$ and $T_2$ are 1-dimensional $K$-comodules.
By Lemma \ref{lem5.1}(a),
$T_i\cong kg_i$ for some grouplike elements $g_i\in G(K)$, for $i=1,2$.

To classify the algebras $R = k\langle U \rangle/(r)$ that occur in
Theorem \ref{thm6.2}(c1), pick a basis $\{b_1,b_2\}$ of $U$ so that
$b_1\in T_1$ and $b_2\in U\setminus T_1$. We get that
\begin{align}
\rho(b_1)&= b_1\otimes g_1,\label{E6.2.1}\tag{E6.2.1}\\
\rho(b_2)&= b_2\otimes g_2+ b_1\otimes x \label{E6.2.2}\tag{E6.2.2}
\end{align}
for some nonzero $x\in K$. (This holds because $\overline{b_2} 
\equiv b_2 \mod T_1$
is a basis of $T_2$, so $\rho(\overline{b_2}) = \overline{b_2} \otimes g_2$.
This is equivalent to $\rho(b_2) = b_2 \otimes g_2 + b_1 \otimes x$.) By
the coassociativity of $\rho$, that is to say $(\rho \otimes 1) \circ \rho =
(1 \otimes \Delta) \circ \rho$, we have that
\begin{equation}
\label{E6.2.3}\tag{E6.2.3}
\Delta(x)=x\otimes g_2+g_1\otimes x.
\end{equation}

\begin{lemma}
\label{lem6.3}  Retain the notation above. Assume that
$$r:=a_{11}b_1^2+a_{12}b_1b_2+a_{21}b_2b_1+a_{22} b_2^2=0$$
is the quadratic relation of $R$. Then, the
following statements hold.
\begin{enumerate}
\item
$a_{22}=0$.
\item
$a_{12}a_{21}\neq 0$. So, from now on we assume that $a_{21}=1$.
\item
$g_1g_2=g_2g_1$.
\item
The homological codeterminant of the $K$-coaction on $R$ is trivial if and only if
$g_1g_2=1$.
\item
If the $K$-coaction on $R$ has trivial homological codeterminant, then
$a_{12}\neq -1$. Moreover, after a basis change of $\{b_1,b_2\}$, we
have that $a_{11}=0$ and $x g_1+a_{12}g_1x=0$.
\end{enumerate}
\end{lemma}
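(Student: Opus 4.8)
The plan is to compute the $K$-coaction on the relation $r$ explicitly and read off all five assertions by matching coefficients. Since $\rho$ must descend from the tensor algebra to $R=k\langle U\rangle/(r)$, the ideal $(r)$ is a subcomodule, and its degree-two part $kr$ is a one-dimensional subcomodule of $U^{\otimes 2}$; by Lemma~\ref{lem5.1}(a) there is a grouplike $d\in G(K)$ with $\rho(r)=r\otimes d$. Using the comodule-algebra axiom together with \eqref{E6.2.1} and \eqref{E6.2.2}, I would compute $\rho$ on the basis $\{b_1^2,b_1b_2,b_2b_1,b_2^2\}$ of $U^{\otimes 2}$ (e.g. $\rho(b_1b_2)=b_1b_2\otimes g_1g_2+b_1^2\otimes g_1x$ and $\rho(b_2^2)=b_2^2\otimes g_2^2+b_2b_1\otimes g_2x+b_1b_2\otimes xg_2+b_1^2\otimes x^2$), substitute into $r$, and equate the coefficient of each monomial with the corresponding coefficient of $r\otimes d$. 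This yields the four identities
\begin{align*}
a_{11}g_1^2+a_{12}g_1x+a_{21}xg_1+a_{22}x^2 &= a_{11}d,\\
a_{12}g_1g_2+a_{22}xg_2 &= a_{12}d,\\
a_{21}g_2g_1+a_{22}g_2x &= a_{21}d,\\
a_{22}g_2^2 &= a_{22}d,
\end{align*}
from which every part is extracted.

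For (a), if $a_{22}\neq 0$ the last identity gives $d=g_2^2$, and the second then forces $a_{22}xg_2=a_{12}(g_2-g_1)g_2$, i.e. $x=(a_{12}/a_{22})(g_2-g_1)$. But $x\in k(g_2-g_1)$ is precisely the condition that the sequence $0\to T_1\to U\to T_2\to 0$ splits (replacing $b_2$ by $b_2+(a_{12}/a_{22})b_1$ yields a grouplike eigenvector), contradicting the standing assumption that $U$ is not a direct sum of two one-dimensional comodules; hence $a_{22}=0$. For (b), with $a_{22}=0$ the relation is $r=a_{11}b_1^2+a_{12}b_1b_2+a_{21}b_2b_1$; if $a_{12}=0$ then $r=(a_{11}b_1+a_{21}b_2)b_1$ and if $a_{21}=0$ then $r=b_1(a_{11}b_1+a_{12}b_2)$, each a product of two nonzero elements of $R_1$, contradicting that $R\cong k_q[u,v]$ is a domain (equivalently, the coefficient matrix of $r$ would be singular). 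Thus $a_{12}a_{21}\neq 0$, and I normalize $a_{21}=1$.

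For (c), with $a_{12},a_{21}\neq 0$ and $a_{22}=0$ the second and third identities read $g_1g_2=d$ and $g_2g_1=d$, so $g_1g_2=g_2g_1$. For (d), the comodule form of Theorem~\ref{thm2.1} (see Lemma~\ref{lem5.1}(b)) says the $K$-coaction has trivial homological codeterminant exactly when $kr$ is the trivial comodule, i.e. when $d=1_K$; since $d=g_1g_2$ by (c), this is equivalent to $g_1g_2=1$.

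For (e), assume $g_1g_2=1$, so $g_2=g_1^{-1}$ and $d=1$; the only surviving constraint is the first identity, $a_{11}g_1^2+a_{12}g_1x+xg_1=a_{11}$. I would first rule out $a_{12}=-1$: in that case $r=a_{11}b_1^2-b_1b_2+b_2b_1$ presents the commutative polynomial ring when $a_{11}=0$ and the Jordan plane when $a_{11}\neq 0$, either of which contradicts Proposition~\ref{pro2.7}(c) (for nonsemisimple $H$ one must have $R\cong k_q[u,v]$ with $q\neq\pm1$). With $a_{12}\neq -1$, the base change $b_2'=b_2+\mu b_1$, $\mu=a_{11}/(a_{12}+1)$, rewrites $r$ with coefficient of $b_1^2$ equal to $a_{11}-\mu(a_{12}+1)=0$ while fixing $a_{12}$ and $a_{21}=1$; recomputing the first identity in this basis gives $xg_1+a_{12}g_1x=0$. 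The bookkeeping in the coaction computation is routine; the real content, and the place to be careful, is recognizing the two degeneracies — $x\in k(g_2-g_1)$ signalling a split extension in (a), and $a_{12}=-1$ signalling a commutative or Jordan presentation in (e), each excluded by a standing hypothesis — together with pinning down in (d) that the grouplike $d$ controlling $\rho(r)$ is exactly the homological codeterminant.
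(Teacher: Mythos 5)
Your proposal is correct and follows essentially the same route as the paper: compute the $K$-coaction on the quadratic relation via \eqref{E6.2.1}--\eqref{E6.2.2}, match coefficients of $b_1^2, b_1b_2, b_2b_1, b_2^2$, and extract each part from the resulting identities (your four identities agree with the paper's \eqref{E6.3.1}--\eqref{E6.3.2} and the equations in its part (a), and your treatment of (b)--(e), including the exclusion of $a_{12}=-1$ via Proposition~\ref{pro2.7} and the basis change $b_2\mapsto b_2+a_{11}(1+a_{12})^{-1}b_1$, matches the paper's). The only genuine difference is the contradiction in part (a): from $x\in k(g_2-g_1)$ you conclude that the extension $0\to T_1\to U\to T_2\to 0$ splits, contradicting the standing non-splitness assumption, whereas the paper concludes that $K$ is generated by grouplike elements, hence a semisimple group algebra, contradicting nonsemisimplicity of $H$ --- both are valid and of comparable length.
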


\begin{proof}
Since $\rho: R\to R\otimes K$ defines a comodule algebra, we have 
by (\ref{E6.2.1}) - (\ref{E6.2.2}) that
$$\begin{aligned}
\rho(b_1^2)&=(b_1^2)\otimes g_1^2,\\
\rho(b_1 b_2)&=(b_1^2)\otimes g_1 x+ (b_1 b_2)\otimes g_1g_2,\\
\rho(b_2 b_1)&=(b_1^2)\otimes x g_1+ (b_2 b_1)\otimes g_2g_1,\\
\rho(b_2^2)&=(b_1^2)\otimes x^2+ (b_1 b_2)\otimes xg_2+(b_2b_1)
\otimes g_2 x+(b_2^2)\otimes g_2^2.
\end{aligned}
$$
Since $r$ is a relation of $R$, we have that $\rho(r)=0$.

(a) Proceed by contradiction. If $a_{22}\neq 0$, then we may assume
that $a_{22}=1$. In this case, $\{b_1^2, b_1b_2,b_2b_1\}$ are
linearly independent. Using the computation above, we have that
$$\begin{aligned}
0=\rho(r)&=\rho(a_{11}b_1^2+a_{12}b_1b_2+a_{21}b_2b_1+b_2^2)\\
&=(b_1^2)\otimes (a_{11}g_1^2+a_{12}g_1 x+a_{21}x g_1+x^2-a_{11} g_2^2)\\
&\quad + (b_1b_2)\otimes
(a_{12}g_1g_2+xg_2-a_{12}g_2^2)+(b_2b_1)\otimes (a_{21}g_2g_1+g_2
x-a_{21}g_2^2).
\end{aligned}
$$
Therefore,
$$\begin{aligned}
a_{11} g_2^2&=a_{11}g_1^2+a_{12}g_1 x+a_{21}x g_1+x^2,\\
a_{12}g_2^2&=a_{12}g_1g_2+xg_2,\\
a_{21}g_2^2&=a_{21}g_2g_1+g_2 x.\\
\end{aligned}
$$
Since $g_1$ and $g_2$ are invertible, the last two equations can be
simplified to
$x=a_{12}(g_2-g_1)=a_{21}(g_2-g_1).$
Hence, $K$ is generated by grouplike elements $g_1$ and
$g_2$, and consequently, $K$ is a group algebra which is
semisimple, yielding a contradiction. Therefore, $a_{22}=0$.

(b) Since $R$ is a domain, the relation $r$ is not a product of two
factors of degree one. Therefore $a_{12}a_{21}\neq 0$ as $a_{22}=0.$
For simplicity we may assume that $a_{21}=1$ from now on.

(c) By parts (a) and (b), $a_{22}=0$ and $a_{21}=1$. An easy
computation shows that
$$\begin{aligned}
0=\rho(r)&= \rho(a_{11}b_1^2+a_{12}b_1b_2+b_2b_1)\\
&=(b_1^2)\otimes (a_{11}g_1^2+a_{12}g_1x+x g_1-a_{11}g_2g_1)
+(b_1b_2)\otimes
(a_{12}g_1g_2-a_{12}g_2g_1).\\
\end{aligned}
$$
This implies that
\begin{align}
\label{E6.3.1}\tag{E6.3.1}a_{12}g_1g_2&= a_{12}g_2g_1,\\
\label{E6.3.2}\tag{E6.3.2}a_{11}g_2g_1&= a_{11}g_1^2+a_{12}g_1x+x
g_1.
\end{align}
By part (b), $a_{12}\neq 0$. Therefore $g_1g_2=g_2g_1$.

(d) We now compute $\rho$ on the free algebra generated by $b_1$ and
$b_2$. Recall that $a_{22}=0$ and $a_{21} =1$. Using
\eqref{E6.3.1}-\eqref{E6.3.2}, we have that
$$\begin{aligned}
\rho(r)&= \rho(a_{11}b_1^2+a_{12}b_1b_2+b_2b_1)\\
&=(b_1^2) \otimes (a_{11}g_2g_1)+(b_1b_2)\otimes(a_{12}g_2g_1)+
(b_2b_1)\otimes g_2g_1\\
&=r\otimes g_2g_1.\\
\end{aligned}
$$
Thus, the homological codeterminant of $K$-coaction on 
$k\langle b_1, b_2 \rangle$ is $(g_2g_1)^{-1}$
by Theorem \ref{thm2.1}. The assertion follows.

(e) By way of contradiction, assume that $a_{12}=-1$. Then, the
relation $r$ becomes
$b_2b_1=b_1b_2-a_{11}b_1^2.$
If $a_{11}\neq 0$, then $R$ is isomorphic to $k_J[u,v]$. By
Proposition \ref{pro2.7}(a), $H$ is a group algebra. Hence $K$ is
semisimple, yielding a contradiction. If $a_{11}=0$, then $R$ is
isomorphic to $k[b_1,b_2]$. Again by Proposition~\ref{pro2.7}(b), $H$ (and
so $K$) is semisimple, yielding a contradiction. Therefore,
$a_{12}\neq -1$. Since $a_{12}\neq -1$, by change a basis (replacing
$b_2$ by $b_2+(1+a_{12})^{-1}a_{11} b_1$), we may assume that
$a_{11}=0$. The last assertion follows from \eqref{E6.3.2}.
\end{proof}

Thus if the $K$-coaction on $R$ has trivial homological determinant, 
then $R=k_q[u,v]$ for some $q \in k^{\times}$. We define a family
of Hopf algebras that coacts on $k_q[u,v]$.

\begin{definition}
\label{def6.4} Here, we define a {\it generalized Taft algebra}
$T_{q,\alpha,n}$. Let $q$ be a root of unity such that the order $m$
of $q^2$ is larger than $1$. Let $\alpha\in k$, and let $n$ be a
positive integer divisible by the order of $q$. Let $T_{q,\alpha,n}$
be an algebra generated by $g, g^{-1}$ and $x$ subject to the
relations:
$$gg^{-1}=g^{-1}g=1,\quad xg=qgx,\quad g^n=1,
\quad x^m=\alpha(g^m-g^{-m}).$$
Here, $\alpha$ is either 0 or 1: if $q^m\neq 1$, then $\alpha=0$;
and if $q^m=1$, then $\alpha$ could be 0 or 1. The $k$-vector space
dimension of $T_{q,\alpha, n}$ is $mn$ as it has a basis $\{g^i
x^j\mid 0\leq i\leq n, 0\leq j\leq m\}$.

Finally, $T_{q,\alpha,n}$ becomes a Hopf
algebra with the following coalgebra structure and antipode:
$$\Delta(g)=g\otimes g, \quad \Delta(x)=x\otimes g^{-1}+g\otimes x,
\quad \epsilon(g)=1, \quad \epsilon(x)=0, \quad S(g)=g^{-1},
\quad S(x)=-q x.$$
\end{definition}

\begin{remark} \label{rem6.5}
Write $T$ for $T_{q,\alpha,n}$.
The coradical filtration of $T$ is given by
\[
\begin{array}{lll}
C_0(T)&=kG(T)=\bigoplus_{i=0}^{n-1}k g^i,\\
C_1(T)&= kG(T)\oplus kG(T)x &=kG(T)\oplus xG(T),\\
C_s(T)&=\bigoplus_{j=0}^s kG(T) x^j &=\bigoplus_{j=0}^s x^j kG(T)
\end{array}
\]
for all $s\leq m-1$. As a consequence, if $I$ is a nonzero Hopf
ideal of $T$, then $I\cap C_1(T)\neq 0$ by \cite[Theorem~5.3.1]{Montgomery}.
\end{remark}

Let us recall briefly the quantum binomial theorem. Let $p$ be a
scalar. Let $\displaystyle{ {r \choose s}_{p}}$ denote the
$p$-binomial coefficient $\displaystyle \prod_{i=0}^{s-1} \frac{1
-p^{r-i}}{1-p^{i+1}}$. If $YX=qXY$, then
$$(X+Y)^n=\sum_{s=0}^{n} \displaystyle{ {n \choose s}_{p}} X^s
Y^{n-s},$$ which is called the quantum binomial theorem. If $p$ is a
primitive $n$-th root of unity, then $\displaystyle{ {n \choose
s}_{p}}=0$ for all $s=1,2,\cdots, n-1$. In this case,
$(X+Y)^n=X^n+Y^n$. We will apply these formulas soon. Now we
classify the pairs $(H,R)$ arising in Theorem \ref{thm6.2} with 
$U$ a non-semisimple left $H$-module.

\begin{proposition}
\label{pro6.6} Assume Hypothesis \ref{hyp0.2}, and assume that
$U$ is not semisimple as an $H$-module. Then, $R=k_q[u,v]$ for some 
root of unity $q$ with
$q^2 \neq 1$, and $H^\circ\cong T_{q,\alpha,n}$. The coaction of
$T_{q,\alpha,n}$ on $R$ is determined by
\begin{align}
\rho(u)&=u\otimes g,\label{E6.6.1}\tag{E6.6.1}\\
\rho(v)&=v\otimes g^{-1}+u\otimes x.\label{E6.6.2}\tag{E6.6.2}
\end{align}
\end{proposition}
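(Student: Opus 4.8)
The plan is to feed the structural constraints already isolated in Lemma~\ref{lem6.3} into the classification of finite-dimensional pointed Hopf algebras generated by one grouplike and one skew-primitive element, and to recognize the outcome as a generalized Taft algebra. To begin, since $H$ is nonsemisimple and satisfies Hypothesis~\ref{hyp0.2}, Proposition~\ref{pro2.7}(c) gives $R\cong k_q[u,v]$ for a root of unity $q$ with $q\neq\pm1$; in particular $q^2\neq1$, and in the notation of \ref{not6.2} we have $l=o(q)\geq3$ and $m=o(q^2)>1$. Writing $K:=H^{\circ}$ and using the non-split sequence $0\to T_1\to U\to T_2\to0$ with $T_i\cong kg_i$ and the coaction \eqref{E6.2.1}--\eqref{E6.2.2}, the triviality of the homological determinant of the $H$-action makes the homological codeterminant of the $K$-coaction trivial, so Lemma~\ref{lem6.3}(d) forces $g_1g_2=1$. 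I then set $g:=g_1$, so that $g_2=g^{-1}$, and invoke Lemma~\ref{lem6.3}(a,b,e) to arrange $a_{11}=a_{22}=0$, $a_{21}=1$, and $xg+a_{12}gx=0$. Putting $u=b_1$, $v=b_2$, and $q=-a_{12}$ turns the single relation into $vu=q\,uv$ and reproduces the coaction \eqref{E6.6.1}--\eqref{E6.6.2}.

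Next I would read off the defining relations of $K$. From \eqref{E6.2.3} one has $\Delta(x)=x\otimes g^{-1}+g\otimes x$, while $xg+a_{12}gx=0$ becomes $xg=q\,gx$. As $g$ is a grouplike in the finite-dimensional $K$ it has some finite order $n$; iterating $xg=q\,gx$ gives $xg^n=q^ng^nx$, so $g^n=1$ and $x\neq0$ force $q^n=1$, whence $l\mid n$. Inner faithfulness of the $H$-action (Lemma~\ref{lem1.5}, applied with smallest subcoalgebra $C=\operatorname{span}\{g,g^{-1},x\}$) shows that $K$ is generated as an algebra by $g,g^{-1},x$, and since $g$ is the only grouplike generator, $G(K)=\langle g\rangle\cong C_n$.

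The technical core is to determine $x^m$. Setting $X=g\otimes x$ and $Y=x\otimes g^{-1}$ in $K\otimes K$ and using $g^{-1}x=q\,xg^{-1}$, a direct computation gives $YX=q^2XY$; since $q^2$ is a primitive $m$-th root of unity, the quantum binomial theorem collapses $\Delta(x^m)=(X+Y)^m$ to $X^m+Y^m=g^m\otimes x^m+x^m\otimes g^{-m}$. Thus $x^m$ is a $(g^m,g^{-m})$-skew-primitive element, hence lies in the first term of the coradical filtration. The nontrivial skew-primitives of $K$ are exactly the grouplike translates $g^ix$ of $x$, of type $(g^{i+1},g^{i-1})$, and a short check using $l\mid n$ and $l\geq3$ shows $(g^{i+1},g^{i-1})\neq(g^m,g^{-m})$ for every $i$; hence $x^m$ is a trivial skew-primitive, so $x^m=\beta(g^m-g^{-m})$ for some $\beta\in k$. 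Comparing $x^mg=q^mgx^m$ with $(g^m-g^{-m})g=g(g^m-g^{-m})$ shows that $\beta\neq0$ forces $q^m=1$, so $\beta=0$ whenever $q^m\neq1$; when $q^m=1$ I rescale $v$ (equivalently $x$) to normalize $\beta$ to some $\alpha\in\{0,1\}$. This recovers precisely the relations of Definition~\ref{def6.4}.

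These relations furnish a surjection $T_{q,\alpha,n}\twoheadrightarrow K$ of Hopf algebras, and I would finish with a dimension count. If the kernel were a nonzero Hopf ideal $I$, then by Remark~\ref{rem6.5} we would have $I\cap C_1(T_{q,\alpha,n})\neq0$; but $G(T_{q,\alpha,n})\cong\langle g\rangle=G(K)$ maps isomorphically, and the elements $g^ix^j$ occupy distinct coradical degrees, hence remain linearly independent in $K$, which rules this out. Therefore $\dim_kK=nm=\dim_kT_{q,\alpha,n}$ and $H^{\circ}=K\cong T_{q,\alpha,n}$. I expect the genuine obstacle to be the third step: rigorously excluding nontrivial $(g^m,g^{-m})$-skew-primitives and carrying out the normalization of $\alpha$, since this is exactly where the finite-dimensional pointed structure of $K$ must be used rather than purely formal coproduct identities.
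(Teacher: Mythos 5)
Your overall route coincides with the paper's: reduce to $R=k_q[u,v]$ via Proposition~\ref{pro2.7}(c), feed Lemma~\ref{lem6.3} into the coaction to get $xg=qgx$ and $\Delta(x)=x\otimes g^{-1}+g\otimes x$, collapse $\Delta(x^m)$ by the quantum binomial theorem, conclude $x^m=\alpha(g^m-g^{-m})$, and finish with a surjection $T_{q,\alpha,n}\twoheadrightarrow K$ whose injectivity is checked on $C_1$ via the coradical filtration. However, the step you yourself flag as the "genuine obstacle" is in fact a genuine gap as written. You assert that the nontrivial skew-primitives of $K$ are exactly the translates $g^ix$ and then rule out the type $(g^m,g^{-m})$ by comparing grouplike labels. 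But at that stage of the argument nothing is known about $C_1(K)$ beyond the containment $kG(K)+G(K)x\subseteq C_1(K)$; the whole point is to exclude the possibility that $x^m$ is itself a \emph{new} nontrivial skew-primitive lying outside $kG(K)+G(K)x$. Asserting the classification of skew-primitives of $K$ presupposes that $K\cong T_{q,\alpha,n}$, which is the statement being proved, so the argument is circular at exactly this point. The Taft--Wilson decomposition of $C_1$ does not help here, since it gives no control over the spaces $P'_{g,h}$ for a Hopf algebra one has not yet identified.

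The paper closes this gap by a different device: it passes to $g^mx^m$, which is $(g^{2m},1)$-skew-primitive \emph{and commutes with} $g^{2m}$ (because $x^mg^{2m}=(q^{2m})^mg^{2m}x^m=g^{2m}x^m$), and then invokes \cite[Theorem 0.2]{WZZ}, which forces $\GKdim K\geq 1$ if such a commuting skew-primitive is nontrivial --- contradicting finite-dimensionality of $K$. This is the external input your proposal is missing; some argument of this kind (or an equivalent infinite-dimensionality criterion) is needed, since a purely combinatorial comparison of skew-primitive types cannot work before the structure of $K_1$ is known. Two smaller points: your justification of injectivity ("the $g^ix^j$ occupy distinct coradical degrees, hence remain linearly independent") is not the right mechanism --- the elements $g^ix$ for varying $i$ all sit in the same coradical layer, and the paper instead uses that $K_1/K_0$ is a free $K_0$-Hopf module with $\overline{gx}$ a nonzero coinvariant; and your claim $G(K)=\langle g\rangle$ is harmless but is only cleanly available once $K$ is known to be a quotient of the pointed Hopf algebra $T_{q,\alpha,n}$.
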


\begin{proof} Let $K=H^{\circ}$ and retain the notation used in
Lemma \ref{lem6.3}. Let $u=b_1$ and $v=b_2$.
By Lemma \ref{lem6.3}(e), $q:=-a_{12} \neq -1$, and $R=k_q[u,v]$.
By Proposition \ref{pro2.7}(c), $q$ is a root of unity, $q\neq \pm 1$.
Hence, $m>1$.

Let $g=g_1$. By Lemma \ref{lem6.3}(d), $g_2=g^{-1}$. Then the
$K$-coaction on $k_q[u,v]$ is determined by \eqref{E6.2.1} and
\eqref{E6.2.2}. Since $K$-coaction is inner faithful, $K$ is
generated by $g^{\pm 1}$ and $x$. By Lemma \ref{lem6.3}(e),
$xg=qgx$. Since $K$ is finite dimensional, the order of $g$, denoted
by $n$, is finite. Using the relation $xg=qgx$, one sees that $n$ is
divisible by the order of $q$: $(xg)g^{n-1} = q gxg^{n-1} = q^n g^n
x$ implies that $x = q^n x$.

We now show that $K \cong T_{q, \alpha, n}$. By coassociativity,
$$\Delta(x)=x\otimes g^{-1}+g\otimes x.$$
It is easy to see that $( x\otimes g^{-1})(g\otimes x)=q^2(g\otimes
x)( x\otimes g^{-1})$. By using the quantum binomial theorem and the
fact that $q^2$ is a primitive $m$-th root of unity, we get that
$$\Delta(x^m)=x^m\otimes g^{-m}+g^m\otimes x^m$$
Hence,
$$\Delta(g^m x^m)=(g^m x^m)\otimes 1+g^{2m}\otimes g^m x^m 
\hspace{.2in} \text{and} \hspace{.2in}
(g^mx^m) g^{2m}=g^{2m}(g^mx^m).$$
If $g^mx^m$ is not in $G(K)$, then by \cite[Theorem 0.2]{WZZ},
$\GKdim K\geq 1$,
yielding a contradiction. Therefore $g^m x^m\in G(K)$. An easy
computation shows that $g^m x^m=\alpha(g^{2m}-1)$  as $g^m x^m$ is
also $(g^{2m}, 1)$-primitive for some $\alpha\in k$. Equivalently,
$x^m=\alpha(g^m-g^{-m}).$

Rescaling implies that $\alpha$ is either 0 or 1. At this point, we have
shown that $K$ satisfies all of the relations, and the coalgebra
structure, and the antipode of $T_{q,\alpha,n}$. Therefore, there is
a surjective Hopf algebra homomorphism $\pi: T_{q,\alpha,n}\to K$.
Let $\{K_i\}$ be the coradical filtration of $K$. Since
$kG(T)=kG(K)=K_0=\bigoplus_{i=0}^{n-1} kg^i$, $\{1, g,\cdots,
g^{n-1}\}$ are linearly independent. The Hopf algebra structure on
$K$ induces a $K_0$-Hopf module structure on $K_1/K_0$. By 
\cite[Theorem 1.9.4]{Montgomery},
$K_1/K_0$ is a free $K_0$-module. Since $gx\in K_1/K_0$ is a
nonzero coinvariant, $\{gx, g^2x, \cdots, g^{n}x\}$ is linear
independent modulo $K_0$. This shows that $\pi$ is injective when
restricted to $C_1(T)$ (as defined in Remark \ref{rem6.5}).
Therefore $\pi$ is injective by \cite[Theorem 5.3.1]{Montgomery}.
The assertion follows.
\end{proof}

Next, we compute the invariant subring $R^H$ for the pair $(H,R)$ in
the proposition above. Recall Notation~\ref{not6.1}. First, we 
require the following lemma.

\begin{lemma}
\label{lem6.7} Suppose that $n=l$ and $K:=T_{q,0,l}$ coacts on 
$R=k_q[u,v]$ with
coaction given  by \eqref{E6.6.1}-\eqref{E6.6.2}. Then
$$R^{\mathrm{co} K}=\begin{cases} k[u^{m},v^m] & \text{if~} l=m\\
k[a,b,c]/(b^2-ac) \text{~where~} a=u^{2m}, ~~b=u^m v^m,
~~c=v^{2m}& \text{if~} l=2m,
\end{cases}$$
which are both (AS-)Gorenstein.
\end{lemma}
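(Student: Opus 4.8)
The plan is to compute the coaction $\rho$ explicitly on the monomial basis $\{u^iv^j\}$ of $R=k_q[u,v]$ and read off the coinvariants directly. First I would record the commutation data that drives the computation: in $K=T_{q,0,l}$ the relation $xg=qgx$ gives $g^{-1}x=qxg^{-1}$ and $g^ix^s=q^{-is}x^sg^i$, while in $R$ one has $vu=quv$; these combine to show that the two summands $A:=v\otimes g^{-1}$ and $B:=u\otimes x$ of $\rho(v)$ satisfy $AB=q^2BA$ in $R\otimes K$. Since $m$ is the order of $q^2$ (Notation~\ref{not6.2}), the scalar $q^2$ is a primitive $m$-th root of unity, so the quantum binomial theorem recalled before Proposition~\ref{pro6.6} applies to $(A+B)^j=\rho(v^j)$ and, after normal ordering, yields
$$\rho(u^iv^j)=\sum_{s=0}^{j}\binom{j}{s}_{q^2}\,q^{-is}\,u^{i+s}v^{j-s}\otimes x^sg^{\,i-j+s}.$$
Because $\alpha=0$ we have $x^m=0$ in $K$, so only the summands with $0\le s\le m-1$ survive, and the $s=0$ summand is precisely $u^iv^j\otimes g^{\,i-j}$.

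Next I would invoke the standard description of Gaussian binomial coefficients at a primitive $m$-th root of unity (a $q$-Lucas argument): for $1\le s\le m-1$ one has $\binom{j}{s}_{q^2}=\binom{j\bmod m}{s}_{q^2}$, which is nonzero exactly when $s\le j\bmod m$. Hence all coefficients $\binom{j}{s}_{q^2}$ with $1\le s\le m-1$ vanish if and only if $m\mid j$ (and if $m\nmid j$ the choice $s=j\bmod m$ already produces a nonzero higher term). Combining this with the previous paragraph, a single monomial $u^iv^j$ is a coinvariant precisely when $m\mid j$ (so that the $s\ge1$ terms disappear) and $g^{\,i-j}=1$, i.e.\ $l\mid(i-j)$ (so that the surviving $s=0$ term equals $u^iv^j\otimes 1$).

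The key step is to promote this monomial calculation to the entire coinvariant subring, ruling out non-monomial coinvariants. Here I would expand $R\otimes K$ in the basis $\{u^av^b\otimes x^sg^t\}$ and use two observations: $\rho$ preserves the total degree $i+j$, and for $0\le s\le m-1$ the assignment $(i,s)\mapsto(i+s,s)$ is injective. Consequently the off-diagonal contributions $u^{i+s}v^{j-s}\otimes x^sg^{\,i-j+s}$ with $s\ge1$ arising from distinct source monomials land in distinct basis vectors of $R\otimes K$, and the same is true of the diagonal contributions $u^iv^j\otimes g^{\,i-j}$. Therefore the equation $\rho(r)=r\otimes 1_K$ decouples monomial by monomial, forcing both $m\mid j$ and $l\mid(i-j)$ for every monomial occurring in $r$. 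This is the step I expect to require the most care, since it is exactly what excludes exotic cancellations among non-coinvariant monomials, and it shows that $R^{co K}$ has a basis of the monomials $u^iv^j$ with $m\mid j$ and $l\mid(i-j)$.

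Finally I would assemble the answer from these two numerical conditions. When $l=m$, the conditions $m\mid j$ and $m\mid(i-j)$ force $m\mid i$ and $m\mid j$, so $R^{co K}=k\langle u^m,v^m\rangle$; since $v^mu^m=q^{m^2}u^mv^m=u^mv^m$ (as $q^m=1$), the generators commute and $R^{co K}=k[u^m,v^m]$ is a commutative polynomial ring, hence AS regular and in particular Gorenstein. When $l=2m$, the conditions force $m\mid i$, $m\mid j$ and $i/m\equiv j/m\pmod 2$, so $R^{co K}$ is generated by $a=u^{2m}$, $b=u^mv^m$, $c=v^{2m}$. A short computation using $vu=quv$ and $q^{2m}=1$ shows that $a,b,c$ commute and satisfy $b^2=q^{m^2}ac$; since $q^{m^2}\in\{1,-1\}$ is a nonzero scalar, rescaling identifies $R^{co K}$ with the commutative hypersurface $k[a,b,c]/(b^2-ac)$, which is Gorenstein. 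This yields both claimed invariant rings together with their Gorenstein property.
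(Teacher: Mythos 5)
Your proposal is correct and follows essentially the same route as the paper: both compute $\rho(u^iv^j)$ explicitly via the quantum binomial theorem (using $(v\otimes g^{-1})(u\otimes x)=q^2(u\otimes x)(v\otimes g^{-1})$ and $x^m=0$) and then exclude further coinvariants by a linear-independence argument in $R\otimes K$, before reading off the generators and the hypersurface relation. The only notable difference is in that exclusion step: the paper writes a homogeneous coinvariant as $\sum_j u^jv^jf_j(u^m,v^m)$ and kills the top nonzero $f_j$ by inspecting the highest power of $x$ in the second tensor factor, whereas you characterize the coinvariant monomials outright via the $q$-Lucas vanishing of $\binom{j}{s}_{q^2}$ together with the injectivity of $(i,s)\mapsto(i+s,s)$ on basis vectors --- an equivalent but somewhat more explicit bookkeeping of the same computation.
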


\begin{proof}
Note that $(v\otimes g^{-1})(u\otimes x)=q^2
(u\otimes x)(v\otimes g^{-1}).$ By \eqref{E6.6.1}-\eqref{E6.6.2}
and the quantum binomial theorem, we have for any integers $i$ and
$j$,
$$\begin{aligned}
\rho(u^i)&= u^i\otimes g^i,\\
\rho(v^i)&= \sum_{s=0}^i {i \choose s}_{q^2}(u\otimes x)^s
(v\otimes g^{-1})^{i-s},\\
\rho(u^i v^j)&=(u^i\otimes g^i)
\sum_{s=0}^j {j \choose s}_{q^2}(u\otimes x)^s
(v\otimes g^{-1})^{j-s}\\
&=(u^iv^j\otimes g^{i-j})+(u^i\otimes g^i)
\sum_{s=1}^j {j \choose s}_{q^2}(u\otimes x)^s
(v\otimes g^{-1})^{j-s}.
\end{aligned}
$$

\noindent \underline{Case 1}: $l=m$. In this case, $l$ is odd. Then
\[
\begin{array}{lll}
\rho(u^m)&=u^m\otimes g^m &=u^m\otimes 1,\\
\rho(v^m)&=\sum_{s=0}^m {m \choose s}_{q^2} (u\otimes x)^s
(v\otimes g^{-1})^{m-s}
=u^m\otimes x^m+v^m\otimes g^{-m}&=v^m\otimes 1.
\end{array}
\]
Thus $u^m, v^m\in R^{co K}$. If $f=\sum_{i+j=d} c_{ij} u^iv^j$ is
any homogeneous element in $R^{co K}$, then $\rho(f)=f\otimes 1$. If
$c_{ij}\neq 0$, then $\rho(f)=f\otimes 1$ implies that $g^{i-j}=1$,
or $i \equiv j \mod m$. Hence, we can rewrite $f$ as
$$f=\sum_{j=0}^{m-1} u^j v^j f_j(u^m,v^m)$$
where $f_j(u^m,v^m)$ is a polynomial of $u^m$ and $v^m$. In this
setting,
$$\begin{aligned}
\rho(f)&=\sum_{j=0}^{m-1} \left\{(u^jv^j\otimes 1)+(u^j\otimes g^j)
\sum_{s=1}^j {j \choose s}_{q^2}(u\otimes x)^s
(v\otimes g^{-1})^{j-s}\right\} (f_j(u^m,v^m)\otimes 1)
&=f\otimes 1,
\end{aligned}
$$
which forces
$$\sum_{j=0}^{m-1} \{(u^j\otimes g^j) \sum_{s=1}^j
{j \choose s}_{q^2}(u\otimes x)^s (v\otimes g^{-1})^{j-s}\}
(f_j(u^m,v^m)\otimes 1)=0.$$
Let $j>0$ be the maximal integer such
that $f_{j}(u^m,v^m)\neq 0$. By looking at the largest degree of $x$ in
the second tensor component, we have that
$$(u^j\otimes g^j) (u\otimes x)^{j} (f_j(u^m,v^m)\otimes
1)=0$$ which implies that $f_j(u^m,v^m)=0$, a contradiction.
Therefore $f_j(u^m,v^m)=0$ for all $j\neq 0$, and hence $f\in
k[u^m,v^m]$. The assertion follows.
\smallskip

\noindent \underline{Case 2}: $l=2m$. Then $g^{2m}=1$ (but $g^m\neq
1$). As in Case 1, one sees that $\rho(u^m)=u^m\otimes g^m$ and
$\rho(v^m)=v^m\otimes g^{-m}$. Hence $u^{2m}, v^{2m}$, and $u^m v^m$
are in $R^{\mathrm{co} K}$. As in the proof of Case 1, if $f=\sum_{i+j=d}
c_{ij} u^iv^j$ and if $c_{ij}\neq 0$, then $g^{i-j}=1$, or $2m$
divides $i-j$. Hence we can rewrite $f$ as
$$f=\sum_{j=0}^{m-1} u^j v^j f_j(u^{2m},v^{2m}, u^m v^m)$$
where $f_j(u^{2m},v^{2m},u^mv^m)$ is a polynomial of $u^{2m}$,
$v^{2m}$ and $u^m v^m$. The rest of the argument is similar to the
proof of Case 1.
\end{proof}

Now we consider the case where $n \neq l$.

\begin{proposition}
\label{pro6.8} Consider the pair $(H,R)$ under the hypothesis of
Theorem \ref{thm6.2}(c1). By Proposition \ref{pro6.6}, $H^{\circ}
\cong T_{q,\alpha,n} =:K$. Suppose that $K$ coacts on $R=k_q[u,v]$
for $n \neq l$ with coaction given by \eqref{E6.6.1}-\eqref{E6.6.2}.
Then, $R^H \cong R^{\mathrm{co} K}\cong k[a,b,c]/(ac-b^{n/m})$, which is
(AS-)Gorenstein.
\end{proposition}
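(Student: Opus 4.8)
The plan is to replace the $H$-action by the dual $K=H^\circ$-coaction and to compute $R^{co K}$ directly, using that $R^H=R^{co K}$ because $H$ is finite dimensional. All the coaction formulas needed are already available from the proof of Lemma~\ref{lem6.7}: the expressions for $\rho(u^i)$, $\rho(v^i)$ and $\rho(u^iv^j)$ derived there depend only on $q$ (equivalently on $m$) and not on whether $n=l$, so I would reuse them verbatim. In particular $\rho(u^m)=u^m\otimes g^m$, and $\rho(v^m)=v^m\otimes g^{-m}+u^m\otimes x^m$ with $x^m=\alpha(g^m-g^{-m})$.

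First I would introduce the renormalized element $w:=v^m-\alpha u^m$; a one-line check gives $\rho(w)=w\otimes g^{-m}$, the subtraction of $\alpha u^m$ being exactly what cancels the inhomogeneous term $u^m\otimes x^m$ when $\alpha=1$. Setting $h:=g^m$, whose order in $G(K)$ is $d:=n/m$, both $N:=u^m$ and $W:=w$ now coact diagonally, $\rho(N)=N\otimes h$ and $\rho(W)=W\otimes h^{-1}$. Because $\rho$ is an algebra map, the subalgebra $S:=k[u^m,v^m]=k[N,W]$ is $\rho$-stable and the restricted coaction factors through the group algebra $k\langle h\rangle\cong k\mathbb{Z}/d$; concretely $\rho(N^aW^b)=N^aW^b\otimes h^{a-b}$.

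The core of the proof is the containment $R^{co K}\subseteq S$. I would establish it exactly as in Lemma~\ref{lem6.7}: for a homogeneous $f=\sum c_{ij}u^iv^j$ with $\rho(f)=f\otimes 1$, comparing the lowest $x$-degree components forces $g^{i-j}=1$, hence $m\mid(i-j)$, for every monomial occurring in $f$; using that $u^m$ (when $l=m$) or $u^{2m}$ (when $l=2m$) is central in $R$, one rewrites $f=\sum_{\rho_0}u^{\rho_0}v^{\rho_0}f_{\rho_0}$ with each $f_{\rho_0}$ lying in the central subalgebra, and then kills every term with $\rho_0\geq 1$ by extracting the highest power of $x$ in the $K$-component of $\rho(f)$, which must vanish since $f\otimes 1$ has $x$-degree $0$. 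Granting $R^{co K}\subseteq S$, the remaining problem is the cyclic invariant theory of $S$: the monomial $N^aW^b$ is coinvariant precisely when $d\mid(a-b)$, so $R^{co K}=S^{co\,k\langle h\rangle}$ is generated by $a:=N^d=u^n$, $c:=W^d=w^{n/m}$ and $b:=NW=u^m w$, subject to the single relation $ac=b^{n/m}$ after absorbing into $b$ a scalar (a power of $\pm 1$ coming from $WN=\pm NW$ when $l=2m$). This gives $R^H\cong R^{co K}\cong k[a,b,c]/(ac-b^{n/m})$, which is a hypersurface and hence AS-Gorenstein.

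I expect the main obstacle to be the containment $R^{co K}\subseteq S$, namely ruling out coinvariants that are not already (skew-)polynomials in $u^m,v^m$; the highest-$x$-degree bookkeeping has to be done with care in the case $l=2m$, where $u^m,v^m$ only skew-commute and one must organize the computation through the central elements $u^{2m},v^{2m}$ as in Case~2 of Lemma~\ref{lem6.7}. The second, genuinely new, point is the case $\alpha=1$ (which occurs exactly when $l=m$ and $q^m=1$): there $v^m$ is not $\rho$-homogeneous and the naive generators $u^n,u^mv^m,v^n$ are not coinvariant, so the renormalization $w=v^m-\alpha u^m$ is indispensable for reducing to $k\langle h\rangle$-coinvariants.
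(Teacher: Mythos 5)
Your overall architecture coincides with the paper's: reduce to the subalgebra generated by $u^m$ and the renormalized element $v^m-\alpha u^m$, observe that the residual coaction on it is the diagonal coaction of the cyclic group generated by $g^m$, and read off the $A_{n/m-1}$ hypersurface $k[a,b,c]/(ac-b^{n/m})$. The renormalization $w=v^m-\alpha u^m$, the identification of the cyclic group of order $n/m$, and the final invariant-theory computation are all exactly as in the paper, and the Gorenstein conclusion is right.

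There is, however, a genuine gap in the step you yourself single out as the core of the argument, namely the containment $R^{co K}\subseteq k[u^m,v^m]$, which you propose to establish ``exactly as in Lemma~\ref{lem6.7}.'' Lemma~\ref{lem6.7} is stated and proved only for $K=T_{q,0,l}$, where $\alpha=0$ and hence $x^m=0$; there the group-algebra ($x$-degree zero) part of $\rho(u^iv^j)$ is exactly $u^iv^j\otimes g^{i-j}$, and comparing it with $f\otimes 1$ does force $g^{i-j}=1$ monomial by monomial. In the case $l=m$, $\alpha=1$, $n>2m$ --- which is precisely the case where you correctly note the renormalization is indispensable --- one has $x^m=g^m-g^{-m}\neq 0$, so the terms $(u\otimes x)^{tm}$ with $t\geq 1$ fold back into the group-algebra part. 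For instance the $x$-degree-zero component of $\rho(v^{2m})$ contains, besides $v^{2m}\otimes g^{-2m}$, nonzero contributions $2\,u^mv^m\otimes(\cdots)$ and $u^{2m}\otimes (g^m-g^{-m})^2$, since $\binom{2m}{m}_{q^2}=2\neq 0$. Consequently ``comparing the lowest $x$-degree components'' yields a coupled system mixing the coefficients $c_{ij}$ of different monomials, not the clean conclusion $g^{i-j}=1$ for each monomial occurring in $f$. The paper avoids this entirely: it first pushes the coaction forward along the Hopf surjection $K=T_{q,\alpha,n}\twoheadrightarrow K/(g^l-1)\cong T_{q,0,l}$ (which kills $\alpha$), so that $R^{co K}\subseteq R^{co\, T_{q,0,l}}$ and Lemma~\ref{lem6.7} applies as a black box, and only then computes the residual coinvariants under the group algebra generated by $g^m$ (resp.\ $g^{2m}$). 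Your argument can be repaired the same way (or by first quotienting by the Hopf ideal $(x)$ to get the $kC_n$-coaction and the condition $n\mid(i-j)$), but as written the ``verbatim'' reuse of Lemma~\ref{lem6.7} does not go through in the $\alpha=1$ case.
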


\begin{proof} Retain the notation used in Lemma \ref{lem6.7}.
It is clear that there is a surjective Hopf homomorphism $K\to
K/(g^{l}-1)\cong T_{q, 0, l}$. So $R^{\mathrm{co} K}$ is a subring of $R^{\mathrm{co}
T_{q,0,l}}$. We will show that there is an induced $K$-coaction on
$R^{\mathrm{co} T_{q,0,l}}$ and $R^{\mathrm{co} K}= (R^{\mathrm{co} T_{q,0,l}})^{\mathrm{co} K}$. Again,
we divide the proof into two cases as in Lemma \ref{lem6.7}.
\smallskip

\noindent \underline{Case 1}: $l=m$. By Lemma \ref{lem6.7}, $R^{\mathrm{co}
T_{q,0,l}}=k[u^m,v^m]$. By computation,
$$\begin{aligned}
\rho(u^m)&=u^m\otimes g^m,\\
\rho(v^m)&=u^m\otimes x^m+v^m\otimes g^{-m}
&= u^m\otimes \alpha(g^{m}-g^{-m})+v^m\otimes g^{-m}.
\end{aligned}
$$
Hence, $k[u^m,v^m]$ is a $K_0$-comodule algebra, where $K_0$ is the
subalgebra of $K$ generated by $g^m$. Then, $K_0$ is the cyclic 
group algebra $k C_w$
where $w=n/m$, and
$\rho(v^m-\alpha u^m)=(v^m-\alpha u^m)\otimes g^{-m}.$
Hence $k[u^m,v^m]^{\mathrm{co} K_0}=k[a,b,c]/(ac-b^w)$ where $a=(u^m)^w$,
$b=u^m(v^m-\alpha u^m)$ and $c=(v^m-\alpha u^m)^w$. Therefore $$R^{\mathrm{co}
K} = (R^{\mathrm{co} T_{q,0,l}})^{\mathrm{co} K_0} =k[a,b,c]/(ac-b^{w}).$$

\noindent \underline{Case 2}: $l$ is even and $l=2m$. By Definition
\ref{def6.4},
 $\alpha=0$ and $x^m=0$. Then, we have
$\rho(u^m)=u^m\otimes g^m$ and
$\rho(v^m)=v^m\otimes g^{-m}$.
Consequently,
$$
\rho(u^{2m})=u^{2m}\otimes g^{2m},\quad
\rho(u^m v^m)=u^m v^m\otimes 1,\quad
\rho(v^{2m})=v^{2m}\otimes g^{-2m}.
$$
Let $K_0$ be the subalgebra generated by $g^{2m}$. Then $K_0=k C_w$
where $w=n/2m$. It is clear that 
$$(R^{\mathrm{co} T_{q,o,l}})^{\mathrm{co} K_0} =
(k[u^{2m}, u^m v^m, v^{2m}])^{\mathrm{co} K_0}=k[a,b,c]/(ac-b^{2w}),$$ 
where $a=(u^{2m})^w$, $b=u^mv^m$ and $c=(v^{2m})^w$.
\end{proof}

\subsection{$U$ simple;  order of $q$ odd}
\label{ssec6.2} Here, we classify the pairs $(H,R)$ in Theorem
\ref{thm0.4} for $H$ non-semisimple and $U$ a simple left $H$-module.
We will show that $H$ is a finite Hopf algebra quotient of
$\mc{O}_q(SL_2(k))$ for $q$ a root of unity with $q^2 \neq 1$. We assume that
$l:=$ord($q$) is odd. In this case, $l=m=\mathrm{ord}(q^2)$.

Recall the presentation of the quantum special linear group 
${\mathcal O}_q(SL_2(k))$ generated by 
$\{e_{11},e_{12},e_{21},e_{22}\}$ from Example~\ref{ex1.4}.
Let $L$ be the subalgebra of
${\mathcal O}_q(SL_2(k))$ generated by $\{e_{ij}^m\}$ for $1\leq
i,j\leq 2$.

\begin{lemma}
\label{lem6.9} \cite[Proposition III.3.1]{book:BrownGoodearl} Retain
the notation above. We have the following statements.
\begin{enumerate}
\item
$L$ is a Hopf subalgebra of ${\mathcal O}_q(SL_2(k))$.
\item
$L$ is in the center of ${\mathcal O}_q(SL_2(k))$.
\item
$L\cong {\mathcal O}(SL_2(k))$ as Hopf algebras. \qed
\end{enumerate}
\end{lemma}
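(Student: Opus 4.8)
The plan is to exploit the quantum Frobenius phenomenon at the root of unity $q$: raising the generators to the power $m=\mathrm{ord}(q)$ washes out all quantum corrections, since $q^m=1$ (hence $q^{2m}=1$) and the quantum binomial theorem degenerates because $q^2$ is a primitive $m$-th root of unity (as $m$ is odd). I would establish the three parts in the order (b), (a), (c). For centrality (b), set $z:=e_{12}e_{21}=e_{21}e_{12}$. The $q$-commutation relations give at once that $e_{12}^m$ and $e_{21}^m$ commute with every generator, each interchange contributing a factor $q^{\pm m}=1$; likewise $e_{11}^m$ and $e_{22}^m$ commute with $e_{12},e_{21}$. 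The only delicate bracket is $[e_{11}^m,e_{22}]$ (and its transpose), since $e_{11}$ and $e_{22}$ do not $q$-commute. Using $ze_{11}=q^2e_{11}z$ one proves by induction the straightening identity
\[
e_{22}e_{11}^m = e_{11}^m e_{22} + (q-q^{-1})\Big(\sum_{k=0}^{m-1}q^{2k}\Big)e_{11}^{m-1}z,
\]
and the coefficient $(q-q^{-1})\frac{q^{2m}-1}{q^2-1}$ vanishes because $q^{2m}=1$. Thus each $e_{ij}^m$ is central.

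For the Hopf-subalgebra structure (a), the counit gives $\epsilon(e_{ij}^m)=\delta_{ij}$, and since $S$ is an anti-automorphism the antipode formulas yield $S(e_{11}^m)=e_{22}^m$, $S(e_{22}^m)=e_{11}^m$, and $S(e_{12}^m)=(-q)^m e_{12}^m=-e_{12}^m$, $S(e_{21}^m)=-e_{21}^m$ (using $q^m=1$ and $m$ odd), all in $L$. For the coproduct, write $\Delta(e_{ij})=A+B$ with $A=e_{i1}\otimes e_{1j}$ and $B=e_{i2}\otimes e_{2j}$; a short check from the $q$-relations shows $BA=q^2AB$ in every case. Since $\mathrm{ord}(q^2)=m$, the quantum binomial theorem collapses to $(A+B)^m=A^m+B^m$, whence
\[
\Delta(e_{ij}^m)=e_{i1}^m\otimes e_{1j}^m + e_{i2}^m\otimes e_{2j}^m=\sum_{k=1}^{2}e_{ik}^m\otimes e_{kj}^m\in L\otimes L .
\]
Thus $L$ is closed under $\Delta$, $\epsilon$, and $S$, so it is a Hopf subalgebra.

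For the isomorphism (c), define $\phi\colon\mathcal{O}(SL_2(k))\to L$ on the commuting generators by $a_{ij}\mapsto e_{ij}^m$. The formulas above show $\phi$ respects $\Delta$, $\epsilon$, and $S$, while part (b) gives commutativity of $L$; it remains to verify the defining relation $a_{11}a_{22}-a_{12}a_{21}=1$. Here $e_{12}^m e_{21}^m=(e_{12}e_{21})^m=z^m$, while an induction parallel to (b) gives $e_{11}^m e_{22}^m=\prod_{k=0}^{m-1}(1+q^{-1-2k}z)$. As $q^{-1}$ and $q^{-2}$ are $m$-th roots of unity with $q^{-2}$ primitive, the scalars $q^{-1-2k}$ run over all $m$-th roots of unity, so for $m$ odd this product equals $\prod_{\zeta^m=1}(1+\zeta z)=1+z^m$; hence $e_{11}^m e_{22}^m-e_{12}^m e_{21}^m=1$. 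Therefore $\phi$ is a well-defined surjective Hopf-algebra map. For injectivity I would invoke that $\mathcal{O}_q(SL_2(k))$ is a module-finite (indeed free, of rank $m^3$) extension of its central subalgebra $L$, so $\GKdim L=\GKdim\mathcal{O}_q(SL_2(k))=3=\GKdim\mathcal{O}(SL_2(k))$; a surjection of commutative affine domains of equal Krull dimension has a height-zero, hence trivial, kernel.

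The main obstacle is the determinant computation in (c): reassembling $e_{11}^m e_{22}^m$ into a polynomial in $z$ and recognizing the resulting product over all $m$-th roots of unity as $1+z^m$, together with the separate structural input that $\mathcal{O}_q(SL_2(k))$ is module-finite over $L$, which is what ultimately forces the injectivity of $\phi$.
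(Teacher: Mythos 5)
Your argument is correct. Note that the paper itself gives no proof of this lemma: it is stated with a citation to \cite[Proposition III.3.1]{book:BrownGoodearl} and closed immediately, so there is nothing internal to compare against; your write-up is essentially the standard quantum-Frobenius argument from that reference, carried out explicitly for $\mathcal{O}_q(SL_2(k))$ with $\mathrm{ord}(q)=m$ odd. All the computations check: $z e_{11}=q^2e_{11}z$ and $ze_{22}=q^{-2}e_{22}z$ give both the vanishing straightening coefficient $(q-q^{-1})\frac{q^{2m}-1}{q^2-1}=0$ and the factorization $e_{11}^m e_{22}^m=\prod_{k=0}^{m-1}(1+q^{-(2k+1)}z)=1+z^m$ (the scalars $q^{-(2k+1)}$ do exhaust $\mu_m$ because $q^{-2}$ is primitive of order $m$ when $m$ is odd); and $BA=q^2AB$ holds in all four cases of $\Delta(e_{ij})=A+B$, so the $q^2$-binomial collapse gives $\Delta(e_{ij}^m)=\sum_k e_{ik}^m\otimes e_{kj}^m$. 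The only step resting on outside input is the injectivity of $\phi$, for which you invoke that $\mathcal{O}_q(SL_2(k))$ is a domain, module-finite over $L$ (via the PBW basis), forcing $\GKdim L=3$ and hence a zero kernel for a surjection from the $3$-dimensional affine domain $\mathcal{O}(SL_2(k))$; you flag this correctly, and it is exactly the content of the cited result.
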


Now we define a key object for the study of $H$ (or of $H^{\circ}$, in
particular).

\begin{definitiontheorem} \cite[Theorems III.7.10-11]{book:BrownGoodearl}
\label{def6.10} The kernel of the quantum analogue of the Frobenius
map, $SL_q(2) \rightarrow SL_1(2)$, is the finite dimensional Hopf
algebra:
$${\mathcal O}_q(SL_2(k))/({\mathcal O}_q(SL_2(k))L^+)=
{\mathcal O}_q(SL_2(k))/(e_{11}^m-1,~e_{12}^m,~e_{21}^m,~e_{22}^m-1).$$
This is isomorphic to the dual ${\mathfrak u}_q(\mf{sl}_2)^{\circ}$ of
the {\it Frobenius-Lusztig kernel} of $\mathfrak{sl}_2$ at $q$.
Here, $\dim_k {\mathfrak u}_q(\mf{sl}_2)^{\circ} = m^3$.
\end{definitiontheorem}

To classify the pairs $(H,R)$ in Theorem \ref{thm6.2}(c2), let
$K:=H^{\circ}$, and let $C$ be the smallest subcoalgebra of $K$ so
that $\rho(U) \subseteq U \otimes C$. Here, $U$ is simple, so the
coalgebra $C$ is also simple and isomorphic to the matrix coalgebra
$M_2(k)$. Since the $H$-action on $R$ has trivial homological
determinant, we have that $C$ is $S$-invariant by Corollary
\ref{cor3.3} and Lemma \ref{lem3.4}. Now \cite[Theorem 1.5]{Stefan}
implies that $K$ is a quotient Hopf algebra of
$\mc{O}_{-\omega^{-1}}(SL_2(k))$ where $\omega$ is a root of unity with
ord($\omega^2$) = ord($S^2|_C$) $>1$. Let $q:=-\omega^{-1}$. Now 
to classify $K=H^{\circ}$ for $(H,R)$ in Theorem~\ref{thm6.2}(c2), it
suffices to classify finite Hopf algebra quotients of
$\mc{O}_q(SL_2(k))$, for $q$ a root of unity, not equal to $\pm 1$.
We proceed as follows.

If $\mc{O}_q(SL_2(k))$ is generated by
$\{e_{ij}\}_{1 \leq i,j \leq 2}$,  then we have an inclusion of a
central Hopf subalgebra $\mc{O}(SL_2(k))$ into $\mc{O}_q(SL_2(k))$
given by $E_{ij} \overset{\iota_1}{\mapsto} e_{ij}^m$. Moreover, 
consider the following diagram:

\[
\xymatrix{
k \ar[r] &  \mc{O}(SL_2(k)) \ar@{^{(}->}[r]^{\iota_1}
\ar@{->>}[d]^{\eta_1} & \mc{O}_q(SL_2(k)) \ar@{->>}[r]^{\pi_1}
\ar@{->>}[d]^{\eta_2} & \mathfrak{u}_q(\mathfrak{sl}_2)^{\circ}
\ar[r] \ar@{->>}[d]^{\eta_3} & k\\
k \ar[r]&  K' \ar@{^{(}->}[r]^{\iota_2}    & K \ar@{->>}[r]^{\pi_2}
&  \mathfrak{u}_q(\mathfrak{sl}_2)^{\circ}/I  \ar[r] & k. }
\]
\begin{center} Diagram 3 \end{center}
\medskip

\noindent Here, $K' \cong (k\tilde{\Gamma})^{\circ}$, for
$\tilde{\Gamma}$ a finite subgroup of $SL_2(k)$, as $\im (\eta_1)$
is a commutative finite Hopf algebra quotient of $\mc{O}(SL_2(k))$.

\begin{lemma} \label{lem6.11}
The Hopf ideal $I$ of $\mf{u}_q(\mf{sl}_2)^{\circ}$ appearing in
Diagram 3 is (0).
\end{lemma}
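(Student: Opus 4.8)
Recall that the bottom row of Diagram~3 is a central extension $1 \to K' \to K \xrightarrow{\pi_2} \mf{u}_q(\mf{sl}_2)^{\circ}/I \to 1$ with $K' \cong (k\tilde{\Gamma})^{\circ}$, so that $\mf{u}_q(\mf{sl}_2)^{\circ}/I \cong K/\!/K' = K/K(K')^+$, and that $\eta_3 \colon \mf{u}_q(\mf{sl}_2)^{\circ} \to \mf{u}_q(\mf{sl}_2)^{\circ}/I$ is the induced surjection with $\ker \eta_3 = I$. Proving $I = (0)$ is exactly proving that $\eta_3$ is injective. The plan is to run the same coradical-filtration argument used in the proof of Proposition~\ref{pro6.6}: by the Heyneman--Radford criterion \cite[Theorem 5.3.1]{Montgomery}, a Hopf algebra map is injective if and only if it is injective on the first term $C_1$ of the coradical filtration of its domain. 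So it suffices to show $\eta_3$ is injective on $C_1(\mf{u}_q(\mf{sl}_2)^{\circ})$.

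\textbf{Reduction to the fundamental matrix coalgebra.} Let $C_{\mf u} := \pi_1(\operatorname{span}\{e_{ij}\}) \subseteq \mf{u}_q(\mf{sl}_2)^{\circ}$ be the image of the matrix coefficients; this is the simple subcoalgebra $\cong M_2(k)$ attached to the fundamental $2$-dimensional corepresentation, and the $\bar e_{ij}$ generate $\mf{u}_q(\mf{sl}_2)^{\circ}$ as an algebra. Since every simple subcoalgebra of $\mf{u}_q(\mf{sl}_2)^{\circ}$ (indexed by the simple $\mf{u}_q(\mf{sl}_2)$-modules of dimensions $1,2,\dots,m$) occurs inside a tensor power of $C_{\mf u}$, injectivity of $\eta_3$ on the coradical, and hence on $C_1$, will follow once I show that the fundamental block survives intact, i.e. $\bar C := \eta_3(C_{\mf u}) = \pi_2(C) \cong M_2(k)$. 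Thus the whole lemma reduces to the assertion that $\pi_2$ does not collapse the simple subcoalgebra $C \cong M_2(k)$.

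\textbf{Ruling out the easy degenerations.} Note $\pi_2|_C$ can only collapse $C$ along a coideal of $C$, and since $\pi_2$ commutes with the antipode and $S(C) = C$ (Corollary~\ref{cor3.3} and Lemma~\ref{lem3.4}), this coideal is $S$-stable. The genuinely degenerate possibilities are that $\bar C$ is $1$- or $2$-dimensional, or that $\bar C$ is $3$-dimensional with one off-diagonal coefficient killed. In the first two cases $\bar K = \mf{u}_q(\mf{sl}_2)^{\circ}/I$ would be cosemisimple (its nonzero simple comodules become $1$-dimensional); but then $\bar K$ and $K' \cong (k\tilde{\Gamma})^{\circ}$ are both cosemisimple, forcing $K$ cosemisimple and $H = K^*$ semisimple, contradicting the standing hypothesis that $H$ is nonsemisimple. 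Hence these cases cannot occur.

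\textbf{The main obstacle.} The delicate remaining case is the \emph{Borel degeneration}, where $\bar e_{12} = 0$ (or symmetrically $\bar e_{21} = 0$), so that $\bar C$ is the $3$-dimensional quotient coalgebra dual to the upper-triangular matrices; here $\bar K$ is pointed of Taft type and is \emph{not} cosemisimple, so the previous argument does not apply. This is where I expect the real work to lie: one must use that $C \cong M_2(k)$ is a simple subcoalgebra which \emph{generates} $K$ and corresponds to the \emph{simple} $K$-comodule $U$, together with the centrality of $K'$ in $K$ and the explicit relations and antipode of $\mc{O}_q(SL_2(k))$ from Example~\ref{ex:OqSL2}, to show that $\bar e_{12} = 0$ (equivalently $e_{12} \in K(K')^+$) is incompatible with $U$ being simple over $K$ once the central extension $K' \hookrightarrow K$ is accounted for. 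Equivalently, one invokes the classification of finite-dimensional Hopf quotients of $\mc{O}_q(SL_2(k))$ at odd roots of unity \cite{Muller}, \cite{Stefan}: those quotients carrying a simple fundamental $2$-dimensional corepresentation have cokernel exactly $\mf{u}_q(\mf{sl}_2)^{\circ}$, ruling out the triangular quotient. Once the Borel case is excluded, $\bar C \cong M_2(k)$, the Heyneman--Radford criterion shows $\eta_3$ is injective, and therefore $I = (0)$.
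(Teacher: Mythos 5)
Your proposal correctly identifies the shape of the problem --- $I=(0)$ amounts to showing that the quotient does not collapse the fundamental $M_2(k)$-block, and the dangerous case is the ``Borel degeneration'' where the image of $e_{12}$ (or $e_{21}$) dies --- but it does not actually prove the lemma. The paragraph labelled ``The main obstacle'' is an admission that the key step is missing: you write that this ``is where I expect the real work to lie'' and then appeal to the assertion that finite quotients of $\mc{O}_q(SL_2(k))$ with simple fundamental corepresentation ``have cokernel exactly $\mf{u}_q(\mf{sl}_2)^{\circ}$,'' which is essentially the statement of the lemma rather than an independent input. The paper closes exactly this gap with two targeted citations to M\"uller: Theorem 4.2 there shows that any nonzero Hopf ideal $I$ of $\mf{u}_q(\mf{sl}_2)^{\circ}$ must contain $y_{12}$ or $y_{21}$, and Proposition 4.9 (using $m>1$) lets one lift the resulting vanishing $f_{12}=0$ in $\mf{u}_q(\mf{sl}_2)^{\circ}/I$ back to $x_{12}=0$ in $K$ itself, contradicting the fact that $x_{12}$ is part of the basis of the simple subcoalgebra $C\cong M_2(k)$ generating $K$. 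Without a concrete substitute for those two facts, your argument does not go through.

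There are also two subsidiary logical gaps. First, the Heyneman--Radford reduction is misapplied: injectivity of $\eta_3$ on the coradical does not imply injectivity on $C_1$ (the Taft quotient $T_{q,0,n}\to kC_n$ is injective on $C_0$ but not on $C_1$), and injectivity on the single block $C_{\mf u}$ does not imply injectivity on the full coradical merely because the other simple subcoalgebras sit inside tensor powers of $C_{\mf u}$ --- a Hopf ideal could a priori meet a higher block or the skew-primitive part without touching $C_{\mf u}$, and ruling this out is precisely the content of M\"uller's Theorem 4.2. Second, in ``Ruling out the easy degenerations'' you infer that $\bar K$ is cosemisimple because its fundamental comodule degenerates to a sum of $1$-dimensional ones; at best this would make $\bar K$ pointed, not cosemisimple (and a $2$-dimensional quotient coalgebra of $M_2(k)$ need not even be spanned by grouplikes), so the contradiction with nonsemisimplicity of $H$ is not established as stated. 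These cases can likely be repaired, but the Borel case cannot be waved away, and as written the proposal does not constitute a proof.
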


\begin{proof}
Let $x_{i j} \in K$ be the image of $e_{i j} \in \mathcal{O}_{q} ( S L_{2} ( k
) )$. We will abuse notation and let $e_{i j}$ denote also its image in
$\mathfrak{u}_{q} ( \mathfrak{s l}_{2} )^{\circ}$. If $e_{12} \in I$, then by
\cite[Proposition 
4.9]{Muller} we have $x_{12} =0$. 
This is a contradiction since $x_{12}$ is a generator for $K$,
so we conclude that $e_{12} \nin I$. Similarly, $e_{21} \nin I$. By \cite[Theorem 4.2]{Muller}, any
nonzero Hopf ideal $I$ of $\mathfrak{u}_{q} ( \mathfrak{s l}_{2} )^{\circ}$
contains $e_{12}$ or $e_{21}$. Therefore, $I=0$. 
\end{proof}

Therefore we have proved part of the following proposition.

\begin{proposition} \label{pro6.12}
Assume Hypothesis \ref{hyp0.2} with $H$ non-semisimple and assume 
that the left $H$-module $U$ is simple. We have that $H^{\circ}$ 
is a finite Hopf algebra quotient of $\mc{O}_q(SL_2(k))$ that 
coacts on $k_q[u,v]$ for $q$ a root of unity with $q^2 \neq 1$.

If $\mathrm{ord}(q)$ is odd, then $H^{\circ}$ fits into
the following exact sequence of Hopf algebras:
\begin{equation} \label{E6.12.1} \tag{E6.12.1}
k \rightarrow (k\tilde{\Gamma})^{\circ} \rightarrow H^{\circ}
\rightarrow \mf{u}_q(\mf{sl}_2)^{\circ} \rightarrow k,
\end{equation}
for $\tilde{\Gamma}$ a finite subgroup of $SL_2(k)$. The uniqueness
of the $(k\tilde{\Gamma})^{\circ}$-extension by
$\mf{u}_q(\mf{sl}_2)^{\circ}$ is given by Table 4 below.
Moreover, $H^{\circ}$ is coquasitriangular, quasicommutative, not
pointed, and $\dim_k H = \dim_k H^{\circ} = |\tilde{\Gamma}|l^3
= |\tilde{\Gamma}|m^3$.
\end{proposition}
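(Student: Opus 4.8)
The plan is to assemble the proposition from the structural results already in place, handling in turn the general first claim, the exact sequence, the dimension count, the stated ring-theoretic properties, and finally uniqueness. First I would record the general statement, which holds for all admissible $q$. Since $U$ is a simple self-dual $H$-module (Corollary~\ref{cor3.3}) and the associated matrix coalgebra $C\cong M_2(k)$ is $S$-invariant (Lemma~\ref{lem3.4}), the theorem \cite[Theorem 1.5]{Stefan} exhibits $K:=H^{\circ}$ as a Hopf quotient of $\mc{O}_q(SL_2(k))$ for $q=-\omega^{-1}$ a root of unity with $q^2\neq 1$; Proposition~\ref{pro2.7}(c) then forces $R\cong k_q[u,v]$, and the surjection $\mc{O}_q(SL_2(k))\twoheadrightarrow K$ carries the coaction of Example~\ref{ex:OqSL2} onto the inner faithful $K$-coaction on $k_q[u,v]$.

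For the exact sequence \eqref{E6.12.1} I would specialize to $\mathrm{ord}(q)=m$ odd, so $l=m$. The top row of Diagram~3 is the Frobenius--Lusztig kernel sequence of Definition-Theorem~\ref{def6.10}, whose kernel term is the central Hopf subalgebra $L\cong\mc{O}(SL_2(k))$ of Lemma~\ref{lem6.9}. The surjection $\eta_2\colon\mc{O}_q(SL_2(k))\twoheadrightarrow K$ restricts to $\eta_1$ on $L$ and descends to $\eta_3$ on the Frobenius kernel, and $\im\eta_1\cong(k\tilde{\Gamma})^{\circ}$ is a commutative finite quotient of $\mc{O}(SL_2(k))$, hence the function algebra of a finite subgroup $\tilde{\Gamma}\subseteq SL_2(k)$. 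Lemma~\ref{lem6.11} shows the induced Hopf ideal $I$ is zero, so $\eta_3$ is an isomorphism and the bottom row is exactly \eqref{E6.12.1}; this is the portion already announced as proved.

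The dimension formula follows from multiplicativity of dimension in a short exact sequence of finite-dimensional Hopf algebras: by the Nichols--Zoeller theorem \cite{Montgomery}, $K$ is free over its Hopf subalgebra $(k\tilde{\Gamma})^{\circ}$ of rank $\dim_k\mf{u}_q(\mf{sl}_2)^{\circ}$. Combining $\dim_k(k\tilde{\Gamma})^{\circ}=|\tilde{\Gamma}|$ with $\dim_k\mf{u}_q(\mf{sl}_2)^{\circ}=m^3$ from Definition-Theorem~\ref{def6.10} gives $\dim_k H^{\circ}=|\tilde{\Gamma}|\,m^3$, and $\dim_k H=\dim_k H^{\circ}$ as $H$ is finite-dimensional. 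For the assertion that $K$ is not pointed, I would observe that the simple subcoalgebra $C\cong M_2(k)$ sits inside $K$ and has dimension $4>1$, so the coradical of $K$ is not spanned by grouplikes. Coquasitriangularity and quasicommutativity I would inherit from $\mc{O}_q(SL_2(k))$: its universal $r$-form descends along $\eta_2$ because $\ker\eta_2$ is generated by elements of the central Hopf subalgebra $L$, on which the $r$-form is compatible with passage to the quotient, so the induced form on $K$ still satisfies the quasi-commutation relation $\sum b_1a_1\,\mathbf{r}(a_2\otimes b_2)=\sum\mathbf{r}(a_1\otimes b_1)\,a_2b_2$; I would cite \cite{Muller} for the explicit verification on the finite quotients.

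Finally, the uniqueness of the $(k\tilde{\Gamma})^{\circ}$-extension by $\mf{u}_q(\mf{sl}_2)^{\circ}$ recorded in Table~4 reduces to deciding, for each finite subgroup $\tilde{\Gamma}\subseteq SL_2(k)$, which equivalence classes of such extensions are actually realized as finite Hopf quotients of $\mc{O}_q(SL_2(k))$ pulling back correctly along $\iota_1$. This is where I expect the real difficulty to lie: unlike the soft homological and dimension-counting arguments used for the other parts, completing Table~4 requires the fine classification of the finite quotients of $\mc{O}_q(SL_2(k))$ and of their restrictions to the central copy of $\mc{O}(SL_2(k))$, drawing on \cite{Muller}. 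The coquasitriangular descent is a secondary technical point, but pinning down the extension class is the genuinely delicate step.
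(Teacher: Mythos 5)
Your treatment of the first claim, the exact sequence \eqref{E6.12.1}, the dimension count, and the non-pointedness tracks the paper's proof closely: the paper likewise obtains the quotient of $\mc{O}_q(SL_2(k))$ from Corollary~\ref{cor3.3}, Lemma~\ref{lem3.4} and \cite[Theorem 1.5]{Stefan}, identifies $R\cong k_q[u,v]$ via Proposition~\ref{pro2.7}(c) and Example~\ref{ex:OqSL2}, and reads the sequence and the dimension off Diagram~3 together with Lemma~\ref{lem6.11}. Your direct argument for non-pointedness (the simple $4$-dimensional subcoalgebra $C$) and your Nichols--Zoeller justification of the multiplicativity of dimensions are fine, and slightly more self-contained than the paper's bare citation of \cite[Lemma 5.1 and Theorem 5.6]{Muller}. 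Your sketch of why the universal $r$-form descends (the claim that $\ker\eta_2$ is generated by central elements) is not justified as stated, but since you ultimately defer to \cite{Muller} exactly as the paper does, this is not a substantive divergence.

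The genuine gap is the uniqueness statement, i.e.\ Table~4, which is part of the proposition and which you explicitly leave unproved (``this is where I expect the real difficulty to lie''). The missing ingredient is \cite[Theorem 4.11]{Muller}: the $(k\tilde{\Gamma})^{\circ}$-extensions by $\mf{u}_q(\mf{sl}_2)^{\circ}$ are in bijection with group homomorphisms $\lambda:\mathbb{Z}\to\chi(\tilde{\Gamma})$ extending the trivial map $\kappa$ on $m\mathbb{Z}$, that is, with elements of the character group $\chi(\tilde{\Gamma})$ whose order divides $m$. The paper then computes $\chi(C_n)\cong C_n$, $\chi(BD_{4n})\cong C_4$, and $\chi(E_6)\cong C_3$, $\chi(E_7)\cong C_2$, $\chi(E_8)\cong C_1$; since $m$ is odd, the only such element is trivial precisely when $\gcd(m,|\chi(\tilde{\Gamma})|)=1$, which yields exactly the entries of Table~4. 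Without this reduction and the case-by-case character computation, the uniqueness column of Table~4 remains unestablished, so the proof is incomplete on that point.
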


\[
\begin{array}{|cc|cc|}
\hline
\tilde{\Gamma} \leq SL_2(k) &&& K=H^{\circ} \text{~unique?}\\
\hline
C_n &&& \text{Yes if and only if $(m,n) = 1$}\\
BD_{4n} &&& \text{Yes}\\
E_6 &&& \text{Yes if and only if $(m,3) = 1$}\\
E_7 &&& \text{Yes}\\
E_8 &&& \text{Yes}\\
\hline
\end{array}
\]
\medskip

\begin{center} Table 4: The uniqueness of $H^{\circ}$ \end{center}
\medskip

\begin{proof}
By Proposition \ref{pro2.7}(c),  $R$ must be isomorphic to $k_{q}[u,v]$ 
where $q$ is a root of unity, not equal to $\pm 1$. By 
\cite[Proposition 5.5]{CWZ:Nakayama}, $H^{\circ}$ 
is a Hopf algebra quotient of $\mc{O}_q(SL_2(k))$ that 
coacts on $k_q[u,v]$. 
The Hopf algebra $H^{\circ}$ is coquasitriangular, quasicommutative,
and is not pointed \cite[Lemma 5.1 and Theorem 5.6]{Muller}. We see
from Diagram 3 that $\dim H^{\circ}= |\tilde{\Gamma}| \cdot m^3$.

By \cite[Theorem 4.11]{Muller}, the
$(k\tilde{\Gamma})^{\circ}$-extensions by
$\mf{u}_q(\mf{sl}_2)^{\circ}$ are in one-to-one correspondence with
extensions $\lambda: \mathbb{Z} \rightarrow \chi(\tilde{\Gamma})$ of
$\kappa: m\mathbb{Z} \rightarrow \chi(\tilde{\Gamma})$. Here,
$\kappa(m z) = 1$ for all $z \in  \mathbb{Z}$ by \cite[Equation
4]{Muller}.

\[
\xymatrix{
\mathbb{Z} \ar@{-->}[rd]^{\lambda} & \\
m \mathbb{Z} \ar@{^{(}->}[u]  \ar[r]^{\kappa} & \chi(\tilde{\Gamma})
}
\]

Note that $\chi(C_n) \cong C_n$ as follows. If $C_n = \langle \sigma
~|~ \sigma^n=1 \rangle$, then $\chi(C_n) = \{\chi_{j}\}_{1\leq j
\leq n}$ where $\chi_j(\sigma) = \zeta^j$ for $\zeta$ some $n$-th
root of unity. Considering this group isomorphism, we have that
$\lambda(1) = \zeta^j$ for some $j$. For $\lambda$ to be an
extension of $\kappa$, we require that $1 = \kappa(m) = \lambda(m) =
\zeta^{jm}$. Therefore, $\lambda$ is unique if and only if $n$ is
coprime to $m$. In this case, $\lambda$ is the trivial map.

Moreover, $\chi(BD_{4n}) \cong C_4$ as follows. Say that $BD_{4n} =
\langle \sigma, \tau ~|~ \sigma^{2n}=1, \tau^2 = \sigma^n, \sigma
\tau = \sigma^{-1} \tau \rangle.$ Then,  $\chi(BD_{4n}) =
\{\chi_{j}\}_{0 \leq j \leq 3}$ where $\chi_{j}(\sigma, \tau) =
((-1)^j, (\sqrt{-1})^j)$. Now if $\lambda$ defined by $\lambda(1)  =
((-1)^j, (\sqrt{-1})^j)$ is an extension of $\kappa$, then $(1,1) =
\kappa(m) = \lambda(m) = ((-1)^{mj}, (\sqrt{-1})^{mj})$. Since $m$
is odd, $j=0$. Therefore $\lambda$ is the trivial map, and is
unique.

The character groups of $E_6$, $E_7$, and $E_8$ are isomorphic to
$C_3$, $C_2$, and $C_1$ respectively. Hence, we apply the argument
from the cyclic group case to yield the result.
\end{proof}

Now we study the invariant subring $R^H \cong R^{\mathrm{co} H^{\circ}}$ for
the pair $(H,R)$ in the proposition above. By Equation
(\ref{E6.12.1}), $R^{\mathrm{co} H^{\circ}} = (R^{\mathrm{co}\;
\mf{u}_q(\mf{sl}_2)^{\circ}})^{\mathrm{co} (k\tilde{\Gamma})^{\circ}}$ for $R
= k_{q}[u,v]$ with $q$ a root of unity for $q^2 \neq 1$.

\begin{lemma}
\label{lem6.13} The coinvariant subring $R^{\mathrm{co}\;
\mf{u}_q(\mf{sl}_2)^{\circ}}$ of $R=k_q[u,v]$ is isomorphic
to $k[u^m, v^m]$. 
\end{lemma}

\begin{proof}
By the quantum binomial theorem, the coaction of $\mc{O}_q(SL_2(k))$ 
on $k_q[u,v]$ (from Example~\ref{ex1.4}) yields:
\begin{equation} \label{E6.13.1} \tag{E6.13.1}
\rho(u^m)= u^m\otimes e_{11}^m+ v^m\otimes e_{21}^m,\quad
\rho(v^m)= u^m\otimes e_{12}^m+ v^m\otimes e_{22}^m.
\end{equation}

Let $\bar{\rho}:R\to R\otimes\mf{u}_q(\mf{sl}_2)^{\circ}$ be the 
induced coaction of $\mf{u}_q(\mf{sl}_2)^{\circ}$ on $k_q[u,v]$. Then by 
the formulas above, $\bar{\rho}(u^m) = u^m\otimes 1$ and 
$\bar{\rho}(v^m) = v^m\otimes 1$.
Hence, $k[u^m, v^m]\subset R^{\mathrm{co}\; \mf{u}_q(\mf{sl}_2)^{\circ}}$.
Consider the factor ${\mathfrak u}_q(\mf{sl}_2)^{\circ}/(e_{21})$,
which is isomorphic to the Hopf algebra $T_{q,0,m}$ in Definition
\ref{def6.4}. Now by Lemma \ref{lem6.7},
$R^{\mathrm{co}\; \mf{u}_q(\mf{sl}_2)^{\circ}} \subseteq
R^{\mathrm{co}\; T_{q,0,m}} \subseteq k[u^m,v^m],$
so the assertion holds.
\end{proof}

\begin{proposition} 
\label{pro6.14}
Let $(H,R)$ be a pair satisfying Hypothesis \ref{hyp0.2} in the
setting of Theorem \ref{thm6.2}(c2). Then, the invariant subring
$R^H$ of $R$ is (AS-)Gorenstein.
\end{proposition}

\begin{proof}
From the sequence, $k \to (k \tilde{\Gamma})^{\circ} \to H^{\circ}
\to \mf{u}_q(\mf{sl}_2)^{\circ} \to k$, we have that
$$R^H = R^{\mathrm{co}\;H^{\circ}} = \left( R^{\mathrm{co}\;
\mf{u}_q(\mf{sl}_2)^{\circ}} \right)^{\mathrm{co}
(k\tilde{\Gamma})^{\circ}}.$$ By Lemma \ref{lem6.13}, $R^{\mathrm{co}\;
\mf{u}_q(\mf{sl}_2)^{\circ}} \cong k[u^m, v^m]$ where $l=m$ is the
order of $q$ (or order of $q^2$).
By Lemma \ref{lem6.9}, the subalgebra of $\cal{O}_q(SL_2(k))$ 
generated by $\{e_{ij}^m\}$ is isomorphic to $\cal{O}(SL_2(k))$.
We see from \eqref{E6.13.1} that $(k{\tilde{\Gamma}})^{\circ}$ 
coacts inner faithfully 
on $k[u^m,v^m]$. Equivalently ${\tilde{\Gamma}}$ acts faithfully 
on $k[u^m,v^m]$.
Since ${\tilde{\Gamma}}$ is a finite subgroup of $SL_2(k)$, we conclude by 
\cite[Theorem 1]{Watanabe} that $R^H$ is (AS-)Gorenstein.
\end{proof}


\subsection{$U$ simple; order of $q$ even}
\label{ssec6.3} We continue to classify the pairs
$(H,R)$ in Theorem \ref{thm0.4} for $H$ non-semisimple and $U$ a
simple left $H$-module. By Proposition~\ref{pro2.7}(c),
we have that $R \cong k_{q}[u,v]$ where $q$ is a root of unity
with $q^2 \neq 1$. Recall Notation~\ref{not6.1}. We assume here that 
$2m=l=:\mathrm{ord}(q)$ is even. By 
\cite[Proposition~5.5]{CWZ:Nakayama}, $H^{\circ}$ is a finite Hopf 
algebra quotient of $\mc{O}_q(SL_2(k))$ that coacts on $k_q[u,v]$. 
We will show that $H$ is as described in Theorem \ref{thm6.2} (c3). Since $U$ is a two-dimensional simple $H$-module, $K:=H^{\circ}$ is not pointed.

As $l=2m$, consider  the subalgebra
$N$ of $\mc{O}_q(SL_2(k))$  generated by $\{e_{ij}^m
e_{st}^m\}$ for $1 \leq i,j,s,t \leq 2$.

\begin{lemma}
\label{lem6.15}  Retain the notation above. We have the following statements.
\begin{enumerate}
\item
$N$ is a normal Hopf subalgebra of ${\mathcal O}_q(SL_2(k))$.
\item
$N\cong {\mathcal O}(PSL_2(k))$ as Hopf algebras.
\end{enumerate}
\end{lemma}

\begin{proof}
Both parts follows from \cite[I.7.7]{BrownGoodearl} and routine computations.
\end{proof}

Now we define a key object for the study of $H$ (or of $H^{\circ}$
in particular). This algebra was first introduced in
\cite{Takeuchi}; we refer to it by a different name.

\begin{definitiontheorem} \cite[Definition 5.4.1]{Takeuchi}
\label{def6.16} Let $q$ be a root of unity with $q^2 \neq 1$, with
 $\mathrm{ord}(q)=2m$ even. Consider the quotient Hopf algebra:
$${\mathcal O}_q(SL_2(k))/({\mathcal O}_q(SL_2(k))N^+)=
{\mathcal
O}_q(SL_2(k))/(e_{11}^{2m}-1,e_{12}^m,e_{21}^m,e_{22}^{2m}-1).$$ We
denote this Hopf algebra by ${\mathfrak
u}_{2,q}(\mf{sl}_2)^{\circ}$, and we call this  the {\it dual of the
double Frobenius-Lusztig kernel} of $\mathfrak{sl}_2$ at $q$.
Moreover, $\dim_k {\mathfrak u}_{2,q}(\mf{sl}_2)^{\circ} = 2m^3$ by
\cite[Section~5.5]{Takeuchi}.
\end{definitiontheorem}

To study the Hopf algebra $H$ in the pair $(H,R)$ in Theorem
\ref{thm6.2}(c3), we proceed as follows. Again, we assume that
$\mathrm{ord}(q)=2m$ is even. If $\mc{O}_q(SL_2(k))$ is generated by
$\{e_{ij}\}_{1 \leq i,j \leq 2}$,  then we have an inclusion of a
normal Hopf subalgebra $\mc{O}(PSL_2(k))$ into $\mc{O}_q(SL_2(k))$
given by $E_{ij}E_{st} \overset{\iota_1}{\mapsto} e_{ij}^m
e_{st}^m$. Moreover, we can consider the following diagram:

\[
\xymatrix{
k \ar[r] &  \mc{O}(PSL_2(k)) \ar@{^{(}->}[r]^{\iota_1}
\ar@{->>}[d]^{\eta_1} & \mc{O}_q(SL_2(k)) \ar@{->>}[r]^{\pi_1}
\ar@{->>}[d]^{\eta_2} & \mathfrak{u}_{2,q}(\mathfrak{sl}_2)^{\circ}
\ar[r] \ar@{->>}[d]^{\eta_3} & k\\
k \ar[r]&  K' \ar@{^{(}->}[r]^{\iota_2}    & K \ar@{->>}[r]^{\pi_2}
&  \mathfrak{u}_{2,q}(\mathfrak{sl}_2)^{\circ}/I  \ar[r] & k. }
\]
\begin{center} Diagram 5 \end{center}
\bigskip

\noindent Here, $K' \cong (k\Gamma)^{\circ}$, for $\Gamma$ a finite
subgroup of $PSL_2(k)$, as im$(\eta_1)$ is a commutative, finite
Hopf algebra quotient of $\mc{O}(PSL_2(k))$. 
As before, we will denote $x_{ij}=\eta_2(e_{ij})$ and use $e_{ij}$ to denote also their images
in $\mf{u}_{2,q}(\mf{sl}_2)^{\circ}$. 
We will show that $I$ in 
the diagram above is $(0)$ when $q^4\neq 1$. Consider the next four lemmas.

\begin{lemma}
 \label{lem6.17}
Let $W$ be the Hopf algebra $\mf{u}_{2,q}(\mf{sl}_2)^{\circ}$. Then, 
we have the following statements.
\begin{enumerate}
\item 
$W\cong k\langle e_{11}, e_{12},e_{21}\rangle/(R)$ where the
relation ideal $(R)$ is generated by
$$e_{12}e_{11}-q e_{11}e_{12}, \hspace{.2in} 
e_{21}e_{11}-qe_{11}e_{21},\hspace{.2in} 
e_{21}e_{12}-e_{12}e_{21}, \hspace{.2in} e_{11}^{2m}-1, 
\hspace{.2in} e_{12}^m, \hspace{.2in} e_{21}^m,
$$
and $W$ has a $k$-linear basis 
$$\{e_{11}^i e_{12}^j e_{21}^l\mid
0\leq i\leq 2m-1, ~0\leq j\leq m-1, ~0\leq l\leq m-1\}.$$
\item
$W$ is an ${\mathbb N}$-graded algebra with $\deg(e_{11})=0$
and $\deg(e_{12})=\deg(e_{21})=1$.
\item
Any ideal of $W$ is ${\mathbb N}$-graded.
\end{enumerate}
\end{lemma}

\begin{proof}
(a)  Using the relations $e_{11}^{2m}=1$ and 
$e_{11}e_{22}-q^{-1}e_{12}e_{21}=1$ (see Example 1.4), one can write 
$e_{22}$ in terms of $e_{11}, e_{12}$, and $e_{21}$. Hence $W$ is 
generated by $e_{11}, e_{12}, e_{21}$. All relations
listed above are satisfied by $W$. Hence, there is a 
surjective algebra homomorphism from $k\langle e_{11}, e_{12},e_{21}
\rangle/(R)\to W$. By Definition-Theorem~\ref{def6.16}, $\dim W=2m^3$.
Moreover, it is clear that $k\langle e_{11}, e_{12},e_{21}
\rangle/(R)$ is spanned by monomials 
$$\{e_{11}^i e_{12}^j e_{21}^l\mid
0\leq i\leq 2m-1, 0\leq j\leq m-1, 0\leq l\leq m-1\}.$$
So, the assertions follow.

(b) Since all relations are homogeneous, $W$ is graded. 

(c) For any element $f$ in $W$, let $f_i$ denote the degree $i$
component of $f$. Then $f=f_0+f_1+\cdots+ f_{2m-1}$. 
Let $\eta$ be the conjugation by $e_{11}$, namely, $\eta:
f\to e_{11}^{-1} fe_{11}$ for all $f\in W$. Using relations
$e_{12}e_{11}-q e_{11}e_{12}=e_{21}e_{11}-qe_{11}e_{21}=0$,
one sees that $\eta(f_i)=q^i f_i$ for all $i$. Applying $\eta^j$
to $f$ we have 
$$\eta^j(f)=\eta^j(f_0+f_1+\cdots+ f_{2m-1})=
f_0+ (q^j)^1 f_1+ \cdots +(q^j)^{2m-1} f_{2m-1}.$$
If $f$ is in an ideal $I$ of $W$, so is $\eta^j(f)=e_{11}^{-j}fe_{11}^{j}$
for all $j$. Thus we have $f_0+ (q^j)^1 f_1+ \cdots +(q^j)^{2m-1} 
f_{2m-1} \in I$ for all $j$. Taking $j=0,\cdots, 2m-1$, we obtain
that
$$M  \begin{pmatrix}
     f_0, f_1, \dots , f_{2m-2}, f_{2m-1}
    \end{pmatrix}^T
    \in I^{\oplus n},$$
    where
$M$ is the $(2m)\times (2m)$-matrix 
$\left((q^i)^{j}\right)_{0\leq i,j \leq 2m-1}$.

Since $q^i\neq q^l$ for any $0\leq i<l\leq 2m-1$, $M$ is invertible
and $f_i\in I$ for all $i$. Therefore, $I$ is graded.
\end{proof}

\begin{lemma}
\label{lem6.18}
Let $B$ be a Hopf quotient of $W$. Let $y_{ij}$ denote the image
of $e_{ij}$ in $B$.
\begin{enumerate}
\item 
If $y_{12}\neq 0$ (or $y_{21}\neq 0$), then $y_{11}^k\neq 1$ for 
all $0<k<2m$.
\item
If $y_{12}\neq 0$, then $y_{12}^k\neq 0$ for all $0< k<m$.
\item
Let $I$ be nonzero Hopf ideal of the Hopf quotient $B=W/(e_{21})$.
Then $y_{12}\in I$, or equivalently, $e_{12}\in I+(e_{21})$.
\end{enumerate}
\end{lemma}

\begin{proof} (a) Observe that $y_{12}(y_{11}^k-1)=(q^k y_{11}^k-1)y_{12}$.
If $y_{11}^k=1$ for some $0<k<2m$, the above equation implies that
$0=y_{12}(q^k-1)$. Since $q^k-1\neq 0$, we have that $y_{12}=0$, a 
contradiction.

(b) In the proof below we will use the image of $e_{22}$, denoted by
$y_{22}$. By the definition of $W$, $y_{22}=y_{11}^{-1}(1+qy_{12}y_{21})$. 
Let $N\geq 2$ be the minimal positive integer such that $y_{12}^N=0$.
Suppose $N<m$. Then we have 
$$0=\Delta(y_{12}^N)=(y_{11}\otimes y_{12}+y_{12}\otimes y_{22})^N=
\sum_{i=0}^{N} \binom{N}{i}_{q^{-2}} y_{12}^iy_{11}^{N-i}
\otimes y_{22}^i y_{12}^{N-i}.
$$
Using the hypothesis $y_{12}^N=0$, the above equation becomes
$$0=\sum_{i=1}^{N-1} \binom{N}{i}_{q^{-2}} y_{12}^iy_{11}^{N-i}
\otimes y_{22}^i y_{12}^{N-i}.
$$
Now the elements $\{\binom{N}{i}_{q^{-2}} y_{12}^iy_{11}^{N-i}
\mid i=1,\cdots, N-1\}$
are nonzero homogeneous elements of distinct degrees. Hence, these are linearly
independent. Thus $y_{22}^i y_{12}^{N-i}=0$ for all $i=1,\cdots,N-1$.
Since $y_{22}$ is invertible, $y_{12}^{N-i}=0$, a contradiction. 
Therefore $N=m$
and the assertion follows.

(c) Consider the Hopf quotient $\bar{B}=B/I=W/(e_{21}, I)$. Suppose 
$y_{12}\not\in I$ (or $y_{12}\neq 0$ in $\bar{B}$). 
The coradical $\bar{B}_0$ of $\bar{B}$ is spanned by the grouplike elements
$1, y_{11}, \ldots, y_{11}^{2 m - 1}$. By part (a), these are distinct, 
we have $\bar{B}_0 \simeq k C_{2 m}$. Now $\bar{B}$ is pointed, so
the coradical filtration is given by (c.f. \cite[Theorem 5.4.1]{Montgomery})
\begin{eqnarray*}
    \bar{B}_i & = & \bar{B}_0 \oplus \bar{B}_0 y_{12} \oplus \cdots \oplus
    \bar{B}_0 y^i_{12}
\end{eqnarray*}
Here, $y_{12} \not \in k(y_{11} - y_{11}^{-1})$, 
which follows 
since $y_{12}^m=0$ for $m > 1$. By part (b), none of $y_{12}, 
\ldots, y_{12}^{m - 1}$ are zero, so we see that
$\bar{B}_{m - 1} = \bar{B}$ so $\dim_k \left( \bar{B} \right) = \dim_k
\left( \bar{B}_{m - 1} \right) = 2 m^2$. However, $\dim_k \left( B \right)
= 2 m^2$ as well, hence $I= 0$, a contradiction. Therefore, $y_{12}\in I$.
\end{proof}

\begin{lemma}
\label{lem6.19}
Suppose $q^4\neq 1$.
Let $I$ be a Hopf ideal in $W$. Then one of the following must occur:
$$I\subseteq J^2, \quad e_{12}\in I, \quad e_{21}\in I,$$
where $J$ denotes the Jacobson radical of $W$.
\end{lemma}

\begin{proof} 
Suppose that $I \nsubseteq J^2$. 
By the $k$-linear basis of $W$ 
given in Lemma \ref{lem6.17}(a), we have $(e_{12}) \cap (e_{21})
=(e_{12}e_{21})\subseteq J^2$. So we can assume, without loss 
of generality, that $I\not\subseteq (e_{21})$. This means that 
$I+(e_{21})$ is a nonzero Hopf ideal of $B:=W/(e_{21})$.
By Lemma~\ref{lem6.18}(c), $e_{12} \in I + \left( e_{21} \right)$ that
is, $e_{12} + e_{21} p \in I$ for some $p \in W$. If $p = 0$, then we are 
done. If not, we apply Lemma~\ref{lem6.17}(c) to obtain 
$e_{12} + e_{21} p_0 =(e_{12}+e_{21}p)_1\in I$
where $p_0$ is the degree $0$ component of $p$. 

Next we use the hypothesis $q^4\neq 1$.
Since $p_0$ is generated by $e_{11}$, 
$S^2(p_0) = p_0$, so
\begin{eqnarray*}
    S^2 \left( e_{12} + e_{21} p_0 \right) & = & q^{2} e_{12} + q^{-2} e_{21}
    p_0 .
\end{eqnarray*}
Since $q^4 \neq 1$, we have that $e_{12} 
=\frac{1}{q^{2}-q^{-2}} (S^2(e_{12}+e_{21}p_0)
-q^{-2}(e_{12}+e_{21}p_0))\in I$. We are done.
\end{proof}

\begin{lemma}
  \label{x12x21}Let $\pi_{2}$ be the map in Diagram 5. If $q^{2} \neq 1$, then
  $\pi_{2} ( x_{12} ) =0$ if and only if $\pi_{2} ( x_{21} ) =0$. 
\end{lemma}

\begin{proof}
  Let $\{ p_{g}\}_{g \in \Gamma}$ denote the dual basis for $K' = ( k
  \Gamma )^{\circ}$. We will use the following fact repeatedly. If $x_{i j}
  p_{g} =0$, then
  \begin{eqnarray*}
    x_{i1} p_{h} \otimes x_{1j} p_{h'} & = & 0\\
    x_{i2} p_{h} \otimes x_{2j} p_{h'} & = & 0
  \end{eqnarray*}
  for all $h,h' \in G$ such that $h h' =g$. Indeed, the terms on the left
  above sum to $\Delta ( x_{i j} p_{g} ) ( p_{h} \otimes p_{h'} ) =0$
  and are eigenvectors of $S^{2} \otimes 1$ with different eigenvalues: $1$,
  $q^{2}$ for $i=1$; and $q^{-2}$, 1 for $i=2$.
  
  Suppose that $\pi_{2} ( x_{12} ) =0$. By exactness, we have that $x_{12} \in K(K')^+$, so we can write $x_{12} = \sum_{g \in G \setminus \{1\}} y_q p_g$, where $y_g \in K$. Thus, $x_{12}p_1 =0$. We show below that
  $x_{21} p_{1} =0$, which is equivalent to $\pi_{2} ( x_{21} ) =0$. The
  converse follows by symmetry.
  
  If $x_{12} =0$, then $x_{11}$ is grouplike and $K$ is
  generated by $x_{11}$, $x_{11}^{-1} x_{21}$. Since $x_{11}^{-1} x_{21}$ is
  skew-primitive, we see that $K$ is pointed, which is a contradiction. This
  shows that $x_{12} \neq 0$, so
 we must have $x_{12} p_{g} \neq 0$ for some $g \in
  \Gamma$. From $x_{12} p_{1} =0$, we get $x_{12} p_{g} \otimes x_{22}
  p_{g^{-1}} =0$ which implies $x_{22} p_{g^{-1}} =0$. Using this we get
  $x_{21} p_{g^{-2}} \otimes x_{12} p_{g} =0$ so $x_{21} p_{g^{-2}}
  =0$.
  
  Now if $x_{21} p_{h} =0$ for some $h \in \Gamma$ (for example, $h = g^{-2}$), then $x_{22} p_{h}
  \otimes x_{21} p_{1} =0$. We have $x_{21} p_{1} =0$ or $x_{22}
  p_{h} =0$. In the first case, we are done, so assume $x_{22} p_{h}
  =0$. This gives $x_{21} p_{h g^{-1}} \otimes x_{12} p_{g} =0$ which
  implies $x_{21} p_{h g^{-1}} =0$.
  
  The last two paragraphs show that $x_{21} p_{g^{-i}} =0$ for $i
  \geqslant 2$. Since $\Gamma$ is finite, this shows that $x_{21} p_{1} =0$
  which completes the proof.
\end{proof}

\begin{corollary}\label{newcor}
  If $q^{4} \neq 1$, then the Hopf ideal $I$ of $\mathfrak{u}_{2,q} (
  \mathfrak{s l}_{2} )^{\circ}$ appearing in Diagram 5 is $(0)$.
\end{corollary}
\begin{proof}
  First note that $q^{4} \neq 1$ implies $q^{2} \neq 1$, so we can use Lemmas
  \ref{lem6.19} and \ref{x12x21}. If $e_{12} \in I$, then $\pi_{2} ( x_{12} )
  =0$. By Lemma \ref{x12x21}, we also have $\pi_{2} ( x_{21} ) =0$, so $e_{21}
  \in I$. This shows there are really only two cases in Lemma \ref{lem6.19},
  namely $I \subseteq J^{2}$ or $e_{12} \in I$. We show that $I=0$ in the
  first case, and the second case leads to a contradiction.
  
  Assume $I \subseteq J^{2}$. Let $\left\{ C_j \right\}$ denote the coradical filtration of
  $\mf{u}_{2,q}(\mf{sl}_2)$. By
  \cite[Proposition 5.2.9]{Montgomery}, $C_s = \left( J^{s + 1} \right)^{\perp}$, so we have that 
$I^{\perp} \supseteq \left( J^2\right)^{\perp} = C_1$. By \cite[Section~5]{Takeuchi},    
we see that $C_1$ generates
$\mf{u}_{2,q}(\mf{sl}_2)$, hence $I = 0$.
  
  Assume $e_{12} \in I$. As we observed at the beginning of the proof, this
  implies $\pi_{2} ( x_{12} ) = \pi_{2} ( x_{21} ) =0$. Then
  $\mathfrak{u}_{2,q} ( \mathfrak{s l}_{2} )^{\circ} /I \simeq k F$ for some
  finite cyclic group $F$, since it is generated by a single grouplike element
  $\pi_{2} ( x_{11} )$. Therefore $K$ is a Hopf algebra extension
  \[ k \longrightarrow ( k  \Gamma )^{\circ} \longrightarrow K \longrightarrow
     k F \longrightarrow k. \]
By \cite[Proposition 1.5]{masuokaMSRI} (or \cite[page~571]{masuoka:calculations}), the algebra $K$ is a crossed product. So, by  \cite[Proposition~7.4.2(2)]{Montgomery} it is
  semisimple. This is a contradiction. 
 
\end{proof}

Therefore, we have proved the following proposition.

\begin{proposition} \label{pro6.25}
Assume Hypothesis \ref{hyp0.2} with $H$ non-semisimple and the left
$H$-module $U$ is simple. Here, $H^{\circ}$ is a finite Hopf algebra
quotient of $\mc{O}_q(SL_2(k))$ that coacts on $k_{q}[u,v]$ for
$q$ a root of unity with $q^4 \neq 1$.

Assume that the $\mathrm{ord}(q) =: l=2m$ is even. Then, $H^{\circ}$ fits
into the following exact sequence of Hopf algebras:
\begin{equation} \label{E6.26.1} \tag{E6.26.1}
k \rightarrow (k\Gamma)^{\circ} \rightarrow H^{\circ} \rightarrow
\mf{u}_{2,q}(\mf{sl}_2)^{\circ} \rightarrow k,
\end{equation}
for $\Gamma$ a finite subgroup of $PSL_2(k)$.
Moreover, $\dim_k H = \dim_k H^{\circ} =
2|\Gamma|m^3=2|\Gamma|(l/2)^3$.  \qed
\end{proposition}

Now, we study the invariant subring $R^H \cong R^{\mathrm{co}\; H^{\circ}}$ for
the pair $(H,R)$ in the proposition above. By (\ref{E6.26.1}),
$R^{\mathrm{co}\; H^{\circ}} 
= (R^{\mathrm{co}\; \mf{u}_{2,q}(\mf{sl}_2)^{\circ}})^{\mathrm{co}\;
(k\Gamma)^{\circ}}$ for $R = k_{q}[u,v]$ with $q$ a root of
unity for $q^4 \neq 1$.

\begin{lemma}
\label{lem6.26} The coinvariant subring $R^{\mathrm{co}\;
\mf{u}_{2,q}(\mf{sl}_2)^{\circ}}$ of $R=k_{q}[u,v]$ is
isomorphic to $k[a,b,c]/(b^2-ac)$ where $a=u^{2m}$, $b=u^mv^m$,
$c=v^{2m}$. Here, the coaction has trivial homological
codeterminant.
\end{lemma}

\begin{proof}
By the quantum binomial theorem, $u^{2m}, u^mv^m, v^{2m} \in  R^{\mathrm{co}\;
\mf{u}_{2,q}(\mf{sl}_2)^{\circ}}$.  Consider the factor ${\mathfrak
u}_{2,q}(\mf{sl}_2)^{\circ}/(e_{21})$, which is isomorphic to the
Hopf algebra $T_{q,0,2m}$ in Definition \ref{def6.4}. Now by Lemma
\ref{lem6.7},
$$R^{\mathrm{co}\; \mf{u}_{2,q}(\mf{sl}_2)^{\circ}} \subseteq
R^{\mathrm{co}\; T_{q,0,2m}} \subseteq k[a,b,c]/(b^2-ac)$$ where
$a=u^{2m}$, $b=u^mv^m$, $c=v^{2m}.$
So the assertion holds.
\end{proof}

Now we deal with the case where $q^4=1$.

\begin{remark}
\label{rem6.24}
If $q^4 = 1$, then the Hopf ideal $I$ of
$W = \mf{u}_{2,q}(\mf{sl}_2)^{\circ}$ appearing in Diagram 5 is equal to
  $J_{\lambda} = ( e_{12} - \lambda e_{21} e_{11}^{2} )$ for some $\lambda \in
  k^{\ast}$ or $( 0 )$. By using similar arguments to the $q^4 \neq 1$ case, it is easy to check that $W/J_\lambda \cong W/J_{1}$ for all $\lambda \neq 0$.
So under the hypothesis of $U$ being simple, there are two possibilities for 
$\mf{u}_{2,q}(\mf{sl}_2)^{\circ}/I$ in Diagram 5, either 
$\mf{u}_{2,q}(\mf{sl}_2)^{\circ}$ 
or $\mf{u}_{2,q}(\mf{sl}_2)^{\circ}/(e_{12}-e_{21}e_{11}^2)$.
Note that the Hopf algebra 
$\mf{u}_{2,q}(\mf{sl}_2)^{\circ}/(e_{12}-e_{21}e_{11}^2)$ 
is the unique $8$-dimensional non-semisimple
non-pointed Hopf algebra \cite[Theorem 3.5]{Stefan}.
\end{remark}

\begin{lemma}\label{lem6.27}
  If $q^4 = 1$, then $T \assign \mathfrak{u}_{2, q} \left( \mathfrak{s l}_2
  \right)^{\circ} / \left( e_{12} - e_{21} e_{11}^2 \right)$ coacts on $R =
  k_q \left[ u, v \right]$. In this case,
  $R^{\mathrm{co} \; T} = R^{\mathrm{co} \; \mathfrak{u}_{2, q} 
\left( \mathfrak{s
  l}_2 \right)^{\circ}}$.
\end{lemma}

\begin{proof}
  The Hopf algebra quotients $\mathfrak{u}_{2, q} \left( \mathfrak{s l}_2
  \right)^{\circ} \to T$ and $T \to T / \left( e_{12},
  e_{21} \right)$ induce the following chain
  $$R^{\mathrm{co}~
  \mathfrak{u}_{2, q} \left( \mathfrak{sl}_2 \right)^{\circ}} \subseteq
  R^{\mathrm{co} \; T} \subseteq R^{\mathrm{co}(T / \left( e_{12}, e_{21}
  \right))}$$ of coinvariant subrings. Now $T / \left( e_{12}, e_{21} \right)
  \simeq kC_4$ coacts diagonally on $R$, so 
$R^{\mathrm{co}(T/( e_{12}, e_{21}))} = R^{(4)}$, the fourth Veronese 
subalgebra of $R$. Therefore, it suffices to show that for each 
$f \in R^{(4)}$ coinvariant under the $T$-coaction, we have 
$f \in R^{\mathrm{co} \; \mathfrak{u}_{2, q}( \mathfrak{sl}_2)^{\circ}}$. 
Suppose that $f \in R^{(4)}$ and $f$ is $T$-coinvariant. By 
Lemma~\ref{lem6.26}, $R^{\left( 4 \right)}$ is generated by 
$1, uv^3$ and $u^3 v$ as an $R^{\mathrm{co} \; \mathfrak{u}_{2, q} 
(\mathfrak{sl}_2)^{\circ}}$-module, so we can write $f = f_0 + f_1
  uv^3 + f_2 u^3 v$ for some $f_i \in R^{\mathrm{co} \;
  \mathfrak{u}_{2, q} \left( \mathfrak{sl}_2 \right)^{\circ}}$. Since $f$ is
  $T$-coinvariant, so is $f - f_0$, that is, $\rho \left( f - f_0 \right) =
  \left( f - f_0 \right) \otimes 1 = \left( f_1 uv^3 + f_2 u^3 v \right)
  \otimes 1$. Now
  \begin{eqnarray*}
    \rho( f - f_0 ) & = & \left( f_1 \otimes 1 \right) \rho(
    uv) \rho( v^2 ) + \left( f_2 \otimes 1 \right) \rho( u^2 ) \rho ( uv )\\
    & = & - \left( f_1 \otimes 1 \right) \rho( v^2) \rho (
    uv ) + \left( f_2 \otimes 1 \right) \rho ( u^2 ) \rho
    ( uv )\\
    & = & \left( \left( f_2 u^2 - f_1 v^2 \right) \otimes e_{11}^2 \right) 
    \left( u^2 \otimes e_{11} e_{12} + v^2 \otimes e_{21} e_{22} + uv \otimes
    1 \right)\\
    & = &   \left( f_2 u^2 - f_1 v^2 \right) u^2 \otimes e_{11}^3
    e_{12} + q^{- 1} \left( f_2 u^2 - f_1 v^2 \right) v^2 \otimes e_{11}
    e_{21} + \left( f_2 u^2 - f_1 v^2 \right) uv \otimes e_{11}^2 .
  \end{eqnarray*}
  Since $\{e_{11}^3 e_{12}, ~e_{11} e_{21},~ e_{11}^2, ~1\}$ are linearly
  independent, we have that $f_1 u v^3 + f_2 u^3 v = 0$. Hence $f = f_0 \in
  R^{\mathrm{co} \; \mathfrak{u}_{2, q} \left( \mathfrak{sl}_2
  \right)^{\circ}}$. This completes the proof.
\end{proof}

\begin{proposition} \label{pro6.28}
Let $(H,R)$ be a pair satisfying Hypothesis \ref{hyp0.2} in the
setting of Theorem \ref{thm6.2}(c3). Then, the invariant subring
$R^H$ of $R$ is (AS-)Gorenstein.
\end{proposition}

\begin{proof}
  Recall that $H^{\circ}$ fits into an exact sequence $k \to (k
  \Gamma)^{\circ} \to H^{\circ} \to \mathfrak{u}_{2, q} \left( \mathfrak{s
  l}_2 \right)^{\circ} \to k$ of Hopf algebras, so
  \[ R^H = R^{\mathrm{co} \; H^{\circ}} = \left( R^{\mathrm{co}
     \; \mathfrak{u}_{2, q} \left( \mathfrak{s l}_2
     \right)^{\circ}} \right)^{\mathrm{co} \; (k \Gamma)^{\circ}}
     . \]
  If $q^4 = 1$, then $H^{\circ}$ can also fit into an exact sequence $k \to (k
  \Gamma)^{\circ} \to H^{\circ} \to T \to k$; see Lemma~ \ref{lem6.27}. 
Lemma~\ref{lem6.27} also shows that
  \[ R^H = R^{\mathrm{co} \; H^{\circ}} = \left( R^{\mathrm{co}
     \; T} \right)^{\mathrm{co} \; (k
     \Gamma)^{\circ}} = \left( R^{\mathrm{co} \; \mathfrak{u}_{2,
     q} \left( \mathfrak{s l}_2 \right)^{\circ}} \right)^{\mathrm{co}
     \; (k \Gamma)^{\circ}} . \]
  In both cases, we have 
  $R^{\mathrm{co} \; \mathfrak{u}_{2, q} 
\left( \mathfrak{s l}_2 \right)^{\circ}} \cong 
k \left[ u^{2 m}, v^{2 m}, u^m v^m \right]$ 
  with induced coaction of $(k\Gamma)^{\circ}$ given by
\begin{eqnarray*}
  \bar{\rho} \left( u^{2 m} \right) & = & u^{2 m} 
\otimes e_{11}^{2 m} + v^{2 m}
  \otimes e_{21}^{2 m} + 2 u v \otimes e_{11}^m e_{21}^m\\
  \bar{\rho} \left( v^{2 m} \right) & = & u^{2 m} 
\otimes e_{12}^{2 m} + v^{2 m}
  \otimes e_{22}^{2 m} + 2 u v \otimes e_{12}^m e_{22}^m\\
  \bar{\rho} \left( u^m v^m \right) & = & u^{2 m} 
\otimes e_{11}^m e_{12}^m + v^{2
  m} \otimes e_{21}^m e_{22}^m + u v \otimes 
\left( e_{11}^m e_{22}^m + \left(
  - 1 \right)^m e_{21}^m e_{12}^m \right).
\end{eqnarray*}

If $m$ is even (resp. odd), then the above coaction of $\left( k \Gamma
\right)^{\circ}$ on $k \left[ u^{2 m}, v^{2 m}, u^m v^m \right]$ coincides
with the coaction given in Lemma~\ref{lem6.29} below for $\nu = 1$ 
(resp. $\nu = - 1$). 
By Lemma~\ref{lem6.29}(b), we conclude that 
$R^H \simeq k \left[ u^{2 m}, v^{2 m}, u^m
v^m \right]^{\mathrm{co} \; \left( k \Gamma \right)^{\circ}}$ is AS Gorenstein.
\end{proof}

\begin{lemma}\label{lem6.29}
Consider the coaction of $\mathcal{O}(PSL_2 (k))\cong 
k [\alpha_{i j} \alpha_{k l} \mid i, j, s, t =1, 2]$ on 
$S = k[ x^2, y^2, x y]$ below:
\begin{eqnarray*}
\tau_{\nu} \left( x^2 \right) & = & x^2 \otimes \alpha_{11}^2 + y^2
\otimes \alpha_{21}^2 + 2 xy \otimes \alpha_{11} \alpha_{21}\\
\tau_{\nu} \left( y^2 \right) & = & x^2 \otimes \alpha_{12}^2 + y^2
\otimes \alpha_{22}^2 + 2 xy \otimes \alpha_{12} \alpha_{22}\\
\tau_{\nu} \left( xy \right) & = & x^2 \otimes \alpha_{11} \alpha_{12} +
y^2 \otimes \alpha_{21} \alpha_{22} + xy \otimes \left( \alpha_{11}
\alpha_{22} + \nu \alpha_{21} \alpha_{12} \right)
\end{eqnarray*}
where $\nu = \pm 1$.
\begin{enumerate}
\item[(a)] Let $\tau'_{\nu}$ be the coaction of 
$\mathcal{O}_{\nu} (SL_2 (k))=k \langle \alpha^{(\nu)}_{i j} \mid 
i, j = 1, 2 \rangle$ on $k_{\nu} [x,y]$ given by
\[ \tau'_{\nu} \left( x \right) = x \otimes \alpha_{11}^{\left( \nu
       \right)} + y \otimes \alpha_{21}^{\left( \nu \right)} \hspace{1em}
       \text{and} \hspace{1em} \tau'_{\nu} \left( y \right) = x \otimes
       \alpha_{12}^{\left( \nu \right)} + y \otimes \alpha_{22}^{\left( \nu
       \right)} . \]
The induced coaction of $\mathcal{O} (P SL_2 (k)) = k \left[
\alpha^{\left( \nu \right)}_{i j} \alpha_{k l}^{\left( \nu \right)} \mid
i, j, k, l = 1, 2 \right]$ on $k_{\nu} \left[ x, y \right]^{\left( 2
\right)} \simeq S$ is equal to $\tau_{\nu}$.
    
\item[(b)] Let $\Gamma$ be a finite subgroup of $P S L_2 \left( k \right)$, 
so that $\tau_{\nu}$ induces a coaction of $\left( k \Gamma \right)^{\circ}$
on $S$. Then $S^{\mathrm{co} \; \left( k \Gamma \right)^{\circ}} = k_{\nu}
\left[ x, y \right]^{\mathrm{co}\;\tilde{H}}$ where $\tilde{H}$ is a Hopf
algebra extension of $k\mathbb{Z}_2$ by $(k\Gamma)^{\circ}$.
In particular, the $\tilde{H}$-coaction on $k_{\nu} \left[ x, y \right]$
has trivial homological codeterminant, so $S^{\mathrm{co} \; 
(k \Gamma )^{\circ}}$ is AS Gorenstein.
\end{enumerate}
\end{lemma}

\begin{proof}
Part (a) follows from direct computation. For part (b), we assume 
$\nu = - 1$. The argument for $\nu = 1$ is similar and will be omitted. 
Now $\left( k \Gamma  \right)^{\circ}$ is a finite dimensional Hopf 
algebra quotient $\mathcal{O}  \left( P S L_2 \right) / I$. Since 
$I \subset \mathcal{O}(PSL_2) \subset \mathcal{O}_{- 1}(SL_2)$, we can 
consider the Hopf ideal $\tilde{I}$ of $\mathcal{O} \left( S L_2 \right)$ 
generated by $I$. Then $\mathcal{O}_{- 1} \left( S L_2 \right) / \tilde{I} 
\assign \tilde{H}$ is a finite dimensional Hopf algebra quotient of 
$\mathcal{O}_{-1} \left( S L_2 \right)$ such that
  \[ k \longrightarrow \left( k \Gamma \right)^{\circ} \longrightarrow
     \tilde{H} \longrightarrow k\mathbb{Z}_2 \longrightarrow k \]
so $k_{- 1} \left[ x, y \right]^{\mathrm{co} \; \tilde{H}} = \left( k_{- 1}
\left[ x, y \right]^{\mathbb{Z}_2} \right)^{\mathrm{co} \; \left( k \Gamma
\right)^{\circ}} = S^{\mathrm{co} \; \left( k \Gamma \right)^{\circ}}$. 
Using the quantum determinant relation, we have
\begin{eqnarray*}
    \tau_{- 1} \left( x y + y x \right) = x y \otimes \left( \alpha_{11}
    \alpha_{22} + \alpha_{12} \alpha_{21} \right) + y x \otimes \left(
    \alpha_{21} \alpha_{12} + \alpha_{22} \alpha_{11} \right) 
= \left(x y+y x\right)\otimes 1 .
  \end{eqnarray*}
so by Theorem \ref{thm2.1}, the $\tilde{H}$-coaction has trivial
homological codeterminant. Finally by \cite[Theorem 0.1]{KKZ:Gorenstein} 
$S^{\mathrm{co} \; \left( k \Gamma
\right)^{\circ}} (\cong k_{-1}[x,y]^{\mathrm{co}\; \tilde{H}})$ is 
AS Gorenstein. 
\end{proof}


\section{McKay quivers}
\label{sec:McKay}

Let $H$ be a semisimple Hopf algebra and $U$ be a distinguished $H$-module.
The {\em{(left) McKay quiver}} $Q ( H,U )$ is the quiver whose vertices are
indexed by the isomorphism classes of irreducible $H$-modules $\{ S_{i} \}$
with $m_{i j} \assign \Hom ( S_{i} ,U \otimes S_{j} )$ arrows
from $S_{i}$ to $S_{j}$. Suppose $R=k \langle U \rangle / ( r )$ is an
$H$-module algebra as in Theorem~\ref{thm0.4}, then we associate to $( H,R )$ the McKay
quiver $Q ( H,U )$.

\begin{proposition}
\label{pro7.1} Let $H$ be a semisimple Hopf algebra acting on an AS regular algebra $R = k
\langle U \rangle/(r)$ of global dimension two under Hypotheses~\ref{hyp0.3}. Then the McKay
quiver $Q(H,U)$ is one of the types as listed in Table 6 below. Refer to Theorems 4.5 and 5.2 for the description of $H$ in the cases (a1) - (b4).
\end{proposition}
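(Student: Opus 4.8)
The plan is to prove Proposition~\ref{pro7.1} by running through the classification of pairs $(H,R)$ recorded in Theorems~\ref{thm4.5} and~\ref{thm5.2} and computing (or identifying) the McKay quiver $Q(H,U)$ in each of the cases (a1)--(a3) and (b1)--(b4), then matching the outcome against Table 6. The organizing preliminary, already noted above, is that trivial homological determinant forces $k$ to be a summand of $U^{\otimes 2}$, so by Corollary~\ref{cor3.3} the module $U=ku\oplus kv$ is self-dual. Combining the adjunction $\dim_k\Hom(S_i,U\otimes S_j)=\dim_k\Hom(U^*\otimes S_i,S_j)$ with $U^*\cong U$ gives $m_{ij}=m_{ji}$, so $Q(H,U)$ is symmetric and may be read as an undirected graph; this is precisely what lets one speak of a Dynkin-type diagram, and I would state it first.

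For the cocommutative cases $H=kG$ the quiver $Q(kG,U)$ is literally the classical McKay graph of the two-dimensional representation $U$ of $G$, reducing the problem to ordinary representation theory. In case (a1), $G=\tilde{\Gamma}$ is a nonabelian binary polyhedral subgroup of $SL_2(k)$ acting by its standard representation, and the classical McKay correspondence yields the simply-laced diagrams of type $D$ (binary dihedral) and $E_6,E_7,E_8$ (binary tetrahedral, octahedral, icosahedral). In cases (b1)--(b3), $H=kC_n$ and $U$ is a sum of two characters, so the fusion $U\otimes\chi_j=\chi_{j+1}\oplus\chi_{j-1}$ shows immediately that the quiver on the nontrivial characters is a chain of type $A$ (with the small $n=2$ cases producing the degenerate members of the table). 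Case (b4) is dual: for $H=(kD_{2n})^{\circ}$ the simple modules are the grouplikes of $kD_{2n}$, their tensor product is group multiplication, and $U=S_g\oplus S_h$ for the two generating involutions $g,h$; hence $Q$ is the Cayley graph of $D_{2n}$ with respect to $\{g,h\}$ (a single $2n$-cycle) with the identity vertex deleted, again of type $A$. The genuinely new computation is case (a2): for the ordinary dihedral group $D_{2n}$ acting on $k_{-1}[u,v]$ by $\sigma\mapsto\mathrm{diag}(\zeta,\zeta^{-1})$ and $\tau\mapsto\left(\begin{smallmatrix}0&1\\1&0\end{smallmatrix}\right)$, I would decompose $U\otimes\rho_j$ through the two-dimensional irreducibles $\rho_j$, track how the one-dimensional characters peel off at the two ends of the chain, and observe that the result differs from type $D$ exactly at the branch node; this is the type $DL$ of Table 6.

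The remaining and hardest case is (a3), where $H$ is genuinely noncocommutative---the dual of one of $\mc{B}[\tilde{I}]$, $\mc{A}(\tilde{\Gamma})$, $\mc{B}(\tilde{\Gamma})$ from Corollary~\ref{cor4.6}---so there is no group-representation shortcut. Here the plan is to work entirely at the level of fusion rules, as set up before the proposition: the quiver depends only on the numbers $N_{S_i}^{U,S_j}=\dim_k\Hom(S_i,U\otimes S_j)$, i.e.\ only on the Grothendieck ring of $\mathsf{Rep}\,H$. Since these Hopf algebras are $2$-cocycle deformations of the function algebras $(k\tilde{\Gamma})^{\circ}$ (Masuoka's results cited after Corollary~\ref{cor4.6}), the comodule category of $H^{\circ}$ is tensor-equivalent to---and so shares the fusion rules of---$\mathsf{Rep}\,\tilde{\Gamma}$. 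Consequently $Q(H,U)$ coincides with the quiver already computed in case (a1) and is again of type $D$ or $E$.

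I expect the principal obstacle to be exactly this fusion-rule invariance for (a3): one must verify that the deformation preserves the tensor structure \emph{on isomorphism classes} (not merely the dimensions of the simples), and that the distinguished module $U$ is carried to the standard module of $\tilde{\Gamma}$ under the equivalence, so that the adjacency data $N_{S_i}^{U,S_j}$ are literally unchanged. Once that identification is secured, together with the direct group-theoretic and Cayley-graph computations of the other cases, every pair $(H,R)$ arising in Theorem~\ref{thm0.4} produces a quiver of type $A$, $D$, $E$, or $DL$, which is the content of Table 6 and completes the proof.
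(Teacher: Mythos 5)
Your proposal is correct and follows the same overall strategy as the paper: run through the classification of Theorems~\ref{thm4.5} and~\ref{thm5.2} case by case and compute the fusion data $\dim_k\Hom(S_i,U\otimes S_j)$. The differences are worth noting. For (a2) and (b4) the paper simply cites the literature (\cite{HPR} and \cite{BanicaBichon:4points}), whereas you sketch the computations directly; your Cayley-graph description of (b4) and your peeling-off analysis for the dihedral case (a2) are both sound, though the (a2) decomposition is only outlined and would need to be carried out to confirm the parity dichotomy $D_{(n+4)/2}$ versus $DL_{(n+1)/2}$. Your preliminary observation that self-duality of $U$ forces $m_{ij}=m_{ji}$ is a nice explicit justification for reading $Q(H,U)$ as an undirected diagram, which the paper leaves implicit. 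For the crucial case (a3), the paper argues via \cite[Lemma 5.15]{BichonNatale}: a coalgebra isomorphism $K\to(k\tilde{\Gamma})^{\circ}$ dualizes to an algebra isomorphism $H\cong k\tilde{\Gamma}$, and the fusion rules are asserted to coincide. You instead invoke the cocycle-deformation picture, under which $\mathsf{Rep}\,H$ is tensor-equivalent to $\mathsf{Rep}\,k\tilde{\Gamma}$; this is arguably the cleaner reason the fusion rules (and not merely the list of simples) are preserved, and the verification you flag as the ``principal obstacle'' --- that the equivalence respects the tensor structure on isomorphism classes and carries $U$ to the standard two-dimensional module --- is precisely the content supplied by the Bichon--Natale and Masuoka results the paper leans on. So the two arguments rest on the same external input, packaged differently; yours makes the dependence more transparent but, as written, leaves that identification as a stated plan rather than a completed step.
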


\noindent Here, $\tilde{\Gamma}$ denotes a nonabelian finite subgroup of $SL_2(k)$.

\[
\begin{array}{|rlccc|cc|}
\hline
&\text{$H$} & &&& & Q(H,U) \text{~is of type:}\\
\hline
&&&&&&\\
\text{(a1)~}& k\tilde{\Gamma}& &\tilde{\Gamma}
\text{~ of type~ D$_n$, E$_6$, E$_7$, E$_8$}, \text{~resp.} &&& \text{$\widetilde{\text{D}}_n$, $\widetilde{\text{E}}_6$, $\widetilde{\text{E}}_7$, $\widetilde{\text{E}}_8$}
\text{~resp.}\\
\text{(a2)~}& kD_{2n}&&&&& \begin{cases}
              \text{$\widetilde{\text{D}}_{\frac{n+4}{2}}$},  \text{~if~} n \text{~even}\\
              \text{$\widetilde{\text{DL}}_{\frac{n+1}{2}}$}, \text{~if~} n \text{~odd}
\end{cases}\\
\text{(a3)~}& \mc{D}(\tilde{\Gamma})^{\circ}  & &
\tilde{\Gamma} \text{~of type~}       \text{D$_n$, E$_6$, E$_7$, E$_8$},  \text{~resp.}
&&& \text{$\widetilde{\text{D}}_n$, $\widetilde{\text{E}}_6$, $\widetilde{\text{E}}_7$, $\widetilde{\text{E}}_8$} \text{~resp.}\\
\text{(b1)~} & kC_2 &&&&& \text{$\widetilde{\text{A}}$}_1\\
\text{(b2)~} & kC_2 &&&&& \text{$\widetilde{\text{L}}$}_1\\
\text{(b3)~} & kC_n & &n\geq 3&&& \text{$\widetilde{\text{A}}$}_{n-1}\\
\text{(b4)~} & (kD_{2n})^{\circ} &&&&& \text{$\widetilde{\text{A}}$}_{2n-1}\\
\hline
\end{array}
\]
\medskip

\begin{center} Table 6: McKay Quivers for $(H,R)$ in
Theorem \ref{thm0.4} with $H$ semisimple \end{center}

\bigskip

\begin{proof}
Cases (a1), (b1), and (b3) are well known. 
Case (b4) follows from direct computation.
For case (a2), since $U$ is a faithful irreducible 2-dimensional $D_{2n}$-module,
the McKay quiver $Q(k D_{2n},U)$ can be found in \cite[pages 320 and~325]{HPR}. 

For case (b2), let $S_{1}$ and $S_{-1}$ be the trivial and sign representations of $C_2$, respectively, so that $U=S_{1}\oplus S_{-1}$.
Then, $S_{\pm1}\otimes U= S_{1} \oplus S_{-1}$. Hence $Q(kC_2, U)$ is of type $\widetilde{\text{L}}$ as pictured below.

Finally in case (a3), dualizing the result of \cite[Lemma 5.15]{BichonNatale} gives an algebra isomorphism $H\cong k\tilde{\Gamma}$,
where $\tilde{\Gamma}$ is a non-abelian binary dihedral group. Therefore, the irreducible representations of $H$ and $\tilde{\Gamma}$ 
have the same dimensions. By basic combinatorial considerations (alternatively, a character theoretic argument is given in 
the proof of \cite[Remark 5.23]{BichonNatale}), the McKay quiver of $\mathcal{D}(\tilde{\Gamma})^{\circ}$ coincides with that of 
$\tilde{\Gamma}$ if $\tilde{\Gamma}$ is exceptional. The binary dihedral case follows from \cite[Proposition 3.9]{Masuoka:cocycle}. 

\end{proof}

\begin{remark} \label{rem7.2}
Note that all of the McKay quivers $Q(H,U)$ in the proposition above are of Dynkin type $\widetilde{\text{A}}$-$\widetilde{\text{D}}$-$\widetilde{\text{E}}$,
except in cases (b2) and (a2) for $n$ odd.  In the latter case, $Q$ is of
type $\widetilde{\text{DL}}$. 


\vspace{.4in}
\[
\hspace{-4.5in}
\xymatrix{
\circ  \ar@/^{.3pc}/@{->}[r] \ar@{->}@(ul,dl)
&\circ \ar@/^{.3pc}/@{->}[l] \ar@{->}@(ur,dr)
}
\]
\vspace{-.95in}
\[
\hspace{2in}
\xymatrix{
\circ \ar@/^{.3pc}/@{->}[dr]\\
 &\circ \ar@/^{.3pc}/@{->}[ul]  \ar@/^{.3pc}/@{->}[dl]   \ar@/^{.3pc}/@{->}[r] &\circ  \ar@/^{.3pc}/@{->}[l] \ar@/^{.3pc}/@{->}[r] &\cdots  \ar@/^{.3pc}/@{->}[l] \ar@/^{.3pc}/@{->}[r]
&\circ  \ar@/^{.3pc}/@{->}[l] \ar@/^{.3pc}/@{->}[r]&\circ   \ar@/^{.3pc}/@{->}[l] \ar@/^{.3pc}/@{->}[r]&\circ \ar@/^{.3pc}/@{->}[l]\ar@{->}@(ur,dr)\\
\circ \ar@/^{.3pc}/@{->}[ur]
}
\]
\begin{center} \hspace{-1in}Quiver of type $\widetilde{\text{L}_1}$\hspace{2.3in}Quiver of type $\widetilde{\text{DL}}$ \end{center}
\medskip
\medskip

The quiver of type $\widetilde{\text{DL}}$ also arises in other studies of a quantum
McKay correspondence. For instance, see the work of Malkin, Ostrik,
and Vybornov \cite{MOV} on quiver varieties; there, type $\widetilde{\text{DL}}$ is
referred to as type T.
\end{remark}


\section{Questions for further study}
\label{sec8}

We conclude by listing several open questions that merit further study. First, one natural extension of our main classification result, Theorem \ref{thm0.4}, is to obtain a similar
classification result for Artin-Schelter regular algebras of global dimension $3$.
These algebras were classified by Artin, Schelter, Tate and Van den Bergh,
\cite{ArtinSchelter, ATV1, ATV2}. It is likely that new methods beyond
those developed in this paper are needed for this problem. We state this task as follows. 

\begin{question}
\label{que0.5}
Let $A$ be an Artin-Schelter regular algebra of global dimension $3$. What are the
finite dimensional Hopf algebras that act inner faithfully on and preserve the grading of $A$ with trivial homological determinant?
\end{question}

Moreover, recall Proposition \ref{pro0.5}: we have that $R^H$ is AS Gorenstein, for all $H$-actions on $R$ in Theorem \ref{thm0.4}. This suggests the following question pertaining to
Hopf actions on AS regular algebras of global dimension $n$. Recall that Hopf coaction is dual to Hopf action, that is to say, a Hopf algebra $H$
acts on an algebra $R$ from the left if and only if  its dual Hopf algebra
$K:=H^{\circ}$ coacts on $R$ from the right. 

\begin{question}
\label{que0.7}
Let $K$ be a finite dimensional Hopf algebra quotient of the standard
quantum special linear group ${\mathcal O}_q(SL_n(k))$ that coacts on the
skew polynomial ring $k_q[v_1,\cdots,v_n]$ naturally. Is the
co-invariant subring $k_q[v_1,\cdots,v_n]^{\mathrm{co} K}$ Artin-Schelter Gorenstein?
\end{question}

Observe that we assume that all $H$-actions on $R$ in this work have trivial homological determinant. This begs the question of whether a classification result could be obtained for $H$-actions on $R$ with {\it arbitrary} homological determinant. Naturally, this would be a quantum analogue of such a result  for actions of finite subgroups of $GL_2(k)$ on $k[u,v]$ in classical invariant theory. This prompts several questions, some of which are given below.

\begin{question} Consider the following questions.
\begin{enumerate}
\item  What are the finite dimensional Hopf algebras $H$ that act on an Artin-Schelter regular algebra $R$ (of global dimension two), with the action having arbitrary homological determinant? 
\item For the actions in part (a), when are the corresponding invariant rings $R^H$ Artin-Schelter regular? When are they Artin-Schelter Gorenstein? 
\end{enumerate}
\end{question}

More generally for the first question in part (b), we ask if there is a  version of the Shephard-Todd-Chevalley theorem for finite dimensional Hopf actions on AS regular algebras. This task has been addressed for finite group actions on skew polynomial rings \cite{KKZ:STC}.

On the other hand, in the view of a noncommutative McKay correspondence in our context, there are many questions that can be posed. For example,
it would be useful to have an analogue of the following theorem of Auslander.

\begin{theorem} (Auslander) \cite[Theorems~5.15]{LeuschkeWiegand} 
Let $G$ be a finite subgroup of $SL_n(k)$ and let $S$ be the polynomial ring 
$k[v_1, \dots, v_n]$. Then, $S \# G$ is isomorphic to $\End_{S^G}(S)$ as rings.
\end{theorem}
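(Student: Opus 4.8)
The plan is to construct an explicit natural ring homomorphism $\gamma\colon S\#G\to\End_{S^G}(S)$ and to prove it is bijective; write $R:=S^G$ throughout. I would define $\gamma$ on the standard basis by sending $s\#g$ to the endomorphism $t\mapsto s\,(g\cdot t)$. First one must check that $\gamma$ actually lands in $\End_R(S)$: multiplication by $s\in S$ is $R$-linear because $R$ is central in the commutative ring $S$, and the action of $g\in G$ is $R$-linear because $R=S^G$ is fixed pointwise by $G$, so $g\cdot(rt)=(g\cdot r)(g\cdot t)=r\,(g\cdot t)$ for $r\in R$. A short computation using the skew-group multiplication $(s\#g)(t\#h)=s(g\cdot t)\#gh$ then shows $\gamma$ is a ring homomorphism.

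For injectivity I would pass to the fraction field $K=\operatorname{Frac}(S)$. If $\sum_{g}s_g\,(g\cdot t)=0$ for all $t\in S$, the same identity holds for all $t\in K$; since the distinct elements of $G$ induce distinct field automorphisms of $K$, Dedekind's lemma on the linear independence of characters over $K$ forces every $s_g=0$, so $\gamma$ is injective.

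The heart of the argument is surjectivity, and here I would exploit $G\subseteq SL_n(k)$ to reduce to codimension one. A nonidentity pseudo-reflection fixes a hyperplane and so has determinant a nontrivial root of unity; hence $G\subseteq SL_n(k)$ contains no pseudo-reflections, the branch locus of $\Spec S\to\Spec R$ has codimension at least two (by Zariski--Nagata purity), and $R$ is an integrally closed Noetherian domain. Both $S\#G$ and $\End_R(S)$ are reflexive $R$-modules: $S\#G\cong S^{\oplus|G|}$ and $S$ is a maximal Cohen--Macaulay, hence reflexive, $R$-module, while $\Hom_R(S,S)$ is reflexive because $S$ is reflexive over the normal domain $R$. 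For a reflexive module one has $M=\bigcap_{\operatorname{ht}\mathfrak{p}=1}M_{\mathfrak{p}}$ inside $M\otimes_R K$; combined with the injectivity already established, this shows that an injection $\gamma$ of reflexive $R$-modules which becomes an isomorphism after localizing at every height-one prime is itself an isomorphism. Thus it suffices to prove $\gamma$ is surjective in codimension one.

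Finally, for a height-one prime $\mathfrak{p}$ of $R$ I would localize, noting that $\End_R(S)\otimes_R R_{\mathfrak{p}}\cong\End_{R_{\mathfrak{p}}}(S_{\mathfrak{p}})$ since $S$ is finitely presented. Because the branch locus avoids $\mathfrak{p}$, the finite ring map $R_{\mathfrak{p}}\to S_{\mathfrak{p}}$ is unramified, so $R_{\mathfrak{p}}\subseteq S_{\mathfrak{p}}$ is a $G$-Galois extension in the sense of Auslander--Goldman and Chase--Harrison--Rosenberg. For such an extension the canonical map $S_{\mathfrak{p}}\#G\to\End_{R_{\mathfrak{p}}}(S_{\mathfrak{p}})$ is an isomorphism, and this is exactly the localization of $\gamma$; together with the reflexivity reduction this yields that $\gamma$ is an isomorphism. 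The main obstacle I anticipate is precisely this surjectivity step: one must verify carefully that freeness of the $G$-action in codimension one produces the Galois (equivalently unramified) condition at each height-one prime and then invoke the Galois-descent identification $S\#G\cong\End_R(S)$, while also setting up the reflexivity bookkeeping correctly so that checking in codimension one genuinely suffices.
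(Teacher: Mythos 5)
The paper does not actually prove this statement: it is quoted as classical background in Section~\ref{sec8}, with the proof deferred entirely to \cite[Theorem~5.15]{LeuschkeWiegand}. Your argument is essentially the standard proof given in that reference --- the natural map $s\#g\mapsto(t\mapsto s(g\cdot t))$, injectivity via linear independence of the automorphisms over $\operatorname{Frac}(S)$, and surjectivity by reducing to height-one primes using reflexivity of both sides together with the fact that $G\subseteq SL_n(k)$ contains no pseudo-reflections, so that $R_{\mathfrak p}\subseteq S_{\mathfrak p}$ is unramified (Galois) in codimension one --- so it is correct and matches the intended proof. One small quibble: Zariski--Nagata purity is not the right tool for showing the branch locus has codimension at least two (purity runs the other way, asserting that a nonempty branch locus has pure codimension one); what you actually need is the elementary observation that the ramification locus is contained in $\bigcup_{1\neq g\in G}V^{g}$, and each fixed subspace $V^{g}$ has codimension at least two precisely because no nontrivial element of $SL_n(k)$ fixes a hyperplane.
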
 

This result was vital for establishing an equivalence between the category 
of finitely generated projective modules over $k[u,v] \#G$ and the 
category of $k[u,v]^G$-direct summands of $k[u,v]$, for a finite 
subgroup $G$ of $SL_2(k)$. Note that the objects of the latter 
category are precisely the maximal Cohen-Macaulay modules over $k[u,v]^G$. Thus, we have the question below.

\begin{question}
Given a finite dimensional Hopf algebra $H$ acting on an Artin-Schelter 
regular algebra $R$, satisfying Hypothesis~\ref{hyp0.3}, do we have 
that $R \# H$ is isomorphic to $\End_{R^H}(R)$ as rings?
\end{question}


\section*{Acknowledgments}
The authors are grateful to the referee for pointing out several typographical errors and for making suggestions that improved greatly the exposition of this manuscript. The authors also thank Jacques Alev, Jim Kuzmanovich, and Graham Leuschke for several useful discussions and valuable comments. We thank especially Michael Wemyss for pointing out corrections for Section~\ref{sec:McKay}. C. Walton and J.J. Zhang were supported by the US National
Science Foundation: NSF grants DMS-1102548 and DMS-0855743, respectively.  E. Kirkman was partially supported by grant \#208314 from the Simons Foundation.

\bibliography{HopfAS2trivHdet_biblio}

\end{document}